\documentclass[12pt]{article}

%Document Format
%\setlength{\textheight}{195truemm}
%\setlength{\textwidth}{165truemm}
%\setlength{\headheight}{14.0truemm}
%\setlength{\hoffset}{-20truemm} 
%\setlength{\voffset}{-20truemm} 

%Packages
\usepackage[margin=1in]{geometry}

\usepackage{amsmath,amsthm} 
\usepackage{amssymb}
\usepackage[all]{xypic} %For some commutative diagrams
\usepackage{amsbsy}
\usepackage{graphicx}  %To import figures, rotate   %and for scalebox
\usepackage{subfigure} %To format figures side by side
\usepackage[all,cmtip]{xy} %For matrices in xymatrix
\usepackage{multirow}  %For table labels
\usepackage{multicol}
\usepackage{rotating} %for sideways tables
\usepackage{comment} %to comment large sections
\usepackage{tikz, tikz-cd} %for graphics
\usetikzlibrary{shapes.geometric, arrows, arrows.meta, decorations.markings, decorations.pathmorphing, shapes} 
\usepackage{relsize} %For making math symbols smaller
\usepackage{hyperref} %For email at the end
\usepackage{xcolor} %For colored text
\usepackage{ stmaryrd } %for mapsfrom
 
%Numbering
\numberwithin{equation}{section}

%Math Format
\newtheorem{thm}{Theorem}[section]
\newtheorem{cor}[thm]{Corollary}

\newtheorem{lem}[thm]{Lemma}

\theoremstyle{definition} 
\newtheorem{defn}[thm]{Definition}
\newtheorem{rem}[thm]{Remark}
\newtheorem{exam}[thm]{Example}

\newtheorem{exercise*}{Solution to Exercise}

%New commands
\newcommand{\travis}[1]{{\color{purple} \textbf{T:} [#1]}}

\newcommand{\bA}{\mathbb{A}} 
\newcommand{\bC}{\mathbb{C}}

\newcommand{\bP}{\mathbb{P}}
 
\newcommand{\bR}{\mathbb{R}}

\newcommand{\bZ}{\mathbb{Z}}
\newcommand{\bN}{\mathbb{N}}

\newcommand{\cM}{\mathcal{M}}

\newcommand{\Hom}{\operatorname{Hom}}

\newcommand{\Rep}{\operatorname{Rep}}
\newcommand{\Mat}{\operatorname{Mat}}

\newcommand{\Spec}{\operatorname{Spec}}
\newcommand{\Proj}{\operatorname{Proj}}

\newcommand{\Pic}{\operatorname{Pic}}
\newcommand{\GL}{\operatorname{GL}}

\newcommand{\sm}{\operatorname{sm}}
\newcommand{\sgn}{\operatorname{sign}}

\begin{document}

\title{Nonprojective %proper 
crepant resolutions of %hypertoric 
quiver varieties}
\author{Daniel Kaplan and Travis Schedler}
\date{May 2025}
\maketitle

\begin{abstract}
In this paper, we construct a large class of examples of proper, nonprojective crepant resolutions of singularities for Nakajima quiver varieties. These include four and six dimensional examples and examples with $Q$ containing only three vertices. There are two main techniques: by taking a locally projective resolution of a projective partial resolution as in our previous work \cite{KS24}, and more generally by taking quotients of open subsets of representation space which are not stable loci, related to Arzhantsev--Derental--Hausen--Laface's construction in the setting of Cox rings \cite{ADHL}. By the latter method we exhibit a proper crepant resolution that does not factor through a projective partial resolution. Most of our quiver settings involve one-dimensional vector spaces, hence the resolutions are toric hyperk\"ahler, which were studied from a different point of view in  Arbo--Proudfoot \cite{AP16}.  

This builds on the classification of  \emph{projective} crepant resolutions of a large class of quiver varieties in \cite{BCS23} and the classification of \emph{proper} crepant resolutions for the hyperpolygon quiver varieties in \cite{Hubbard}. 
\end{abstract}

\section{Introduction}

\subsection{Crepant resolutions of singularities} 

\underline{Background}: A \emph{resolution of singularities} is a proper, birational morphism $\pi: \tilde X \to X$ with smooth source.  Suppose that $X$ 
is normal and has a symplectic form $\omega$ on its smooth locus, $X^{\sm} \subset X$. Then $X$ is a \emph{symplectic singularity} if the pullback $\pi^*\omega$ extends to a regular (but possibly degenerate) two-form on $\tilde X$, see \cite[Definition 1.1]{Beauville}. Further, the resolution $\pi$ is called \emph{symplectic} if
the resulting two-form on $\tilde X$ is nondegenerate (hence a symplectic form). 
%(in which case the extension is unique).
 
Examples of symplectic resolutions include the Springer resolution of the nilpotent cone of a semisimple Lie algebra, the Hilbert--Chow morphism from the Hilbert scheme of points to its underlying symmetric product, and the minimal resolution of a du Val singularity. Symplectic resolutions have been studied in various contexts; see surveys such as  \cite{Fu}, \cite{Kaledin-survey}, \cite{Kamnitzer-survey}, and \cite{Okounkov-survey}.
%\cite{Kaledin_Poisson}, \cite{Kaledin_Crepant}, Namakawa (Birational geometry of symplectic resolutions of nilpotent orbits) and Okounkov. [See survey articles: Fu, Kamnitzer, Okounkov]. 
%(Add some citations about the subject of symplectic resolutions, highlighting its importance, e.g., Okounkov, Kaledin, Fu?, ...)

Many of these examples can be recast using the unifying combinatorial framework of Nakajima quiver varieties. Bellamy--Schedler proved that all Nakajima quiver varieties are symplectic singularities and gave a complete classification of which admit a symplectic resolution of singularities \cite[Theorem 1.2, Theorem 1.5]{BS21}.
Bellamy--Craw--Schedler \cite[Corollary 1.3]{BCS23} proved that, under mild conditions, all such \emph{projective} symplectic resolutions are given by variation of stability parameter $\pi: \cM_{\theta'}(Q, \alpha) \to \cM_{\theta}(Q, \alpha)$, where the resolution only depends on the chamber in which $\theta'$ lies of a hyperplane arrangement complement, up to the action of a finite group. \\

\noindent \underline{Objective}: The goal of this paper is to construct a wide class of explicit examples of nonprojective (proper) symplectic resolutions of Nakajima quiver varieties. Most of our examples involve all dimensions at vertices equal to one, so that the resulting quiver variety is toric hyperk\"{a}hler\footnote{Although the term \emph{hypertoric} is commonly used in the literature, we adopt the earlier term \emph{toric hyperk\"{a}hler}.}. We give evidence
that  \emph{most} proper symplectic resolutions of Nakajima quiver varieties (for large enough quivers) are not projective.

%More specifically, we let the quiver $Q$ be star-shaped, the dimension vector $\alpha = (1, 1, \dots, 1)$, and the stability parameter $\theta = (N, -1, -1, \dots, -1)$. Since the dimension vector consists solely of ones, the natural group acting on representations is a torus: $G_\alpha = \prod_{i} \GL_{\alpha_i}(\bC) = \prod_i \bC^*$. The resulting quiver variety is toric hyperk\"{a}hler\footnote{Although the term \emph{hypertoric} is commonly used in the literature, we adopt the earlier term \emph{toric hyperk\"{a}hler}.}. We will also give generalizations where the quiver and dimension vector can be different, see Example \ref{exam:nonhyptertoric}. These examples serve as a proof of concept that \emph{most} symplectic resolutions are not projective. We suspect this remains true outside the setting of quiver varieties, and a modified version holds outside the setting of symplectic singularities. 

\begin{rem}(Symplectic vs crepant)
These examples fall into the more general problem of classifying (proper, but not necessarily projective) crepant resolutions of singularities.  A \emph{crepant} resolution of a Gorenstein singularity is one which pulls the canonical line bundle of the singularity back to the canonical line bundle of the resolution.
Note that every symplectic singularity is Gorenstein \cite[Proposition 1.3]{Beauville}. Since the symplectic structure induces a canonical volume, it follows that  every symplectic resolution is crepant. Conversely, every crepant resolution of a symplectic singularity  is symplectic (for a unique symplectic structure on the source), by \cite[Proposition 3.2]{Kaledin_Crepant}. 
In this paper, although we only consider Nakajima quiver varieties, which are symplectic singularities by \cite{BS21}, we prefer to use the terminology \emph{crepant resolution} in most of the paper.
\end{rem}

\noindent \underline{History}: Arbo--Proudfoot produce the first known examples of proper, nonprojective crepant resolutions of toric hyperk\"{a}hler varieties: see \cite[Corollary 6.8 and Examples 6.9, 6.10]{AP16}. Hubbard gave a complete classification of all proper crepant resolutions of hyperpolygon cones, which are Nakajima quiver varieties for an unframed star-shaped quiver with central vertex having dimension two and outer vertices dimension one \cite[Theorem 1.2]{Hubbard}. 

 In \cite[Corollary 1.8]{KS24}, the authors produce locally-projective resolutions of not-necessarily conical varieties with a nice stratification. One can use this procedure to construct nonprojective crepant resolutions, as we demonstrate below. \\

 \noindent \underline{Motivation}: 
From the viewpoint of algebraic geometry, it makes sense to consider proper resolutions, not merely projective ones. Concretely, these arise when one wishes to glue together local projective resolutions as in \cite{KS24}:  
 there are obstructions to the result being projective. 
 For projective symplectic resolutions, elements of their classification are understood: for instance, thanks to \cite{BCHM}, any symplectic singularity locally admits only finitely many projective crepant resolutions, and one can explicitly describe them via the movable fan, studied in more detail in \cite{Namikawa}. 
 
  Until recently it seemed quite difficult to understand the proper case; even basic questions, such as whether there are finitely many proper crepant resolutions up to isomorphism of a given singularity, or if different such resolutions are diffeomorphic or otherwise equivalent, seemed difficult (to us at least).  Aside from in the aforementioned results, few concrete nonprojective examples were produced.  By producing a large class of explicit concrete examples in the basic case of quiver varieties, it should be possible to gain an understanding of this more flexible, plentiful class of symplectic resolutions. In future work we plan to study their properties and classification in more detail.
  
% \begin{itemize}
%     \item Gluing of projective resolutions need not be projective, but is necessarily proper. (Ideas of KS largerly work in proper setting.)
 %    \item Proper resolutions are far more plentiful.
  %   \item Want to understand properties of proper resolutions (future work)
 %\end{itemize}

 \begin{rem}(Partial resolutions)
We will also use the notion of a \emph{partial resolution} of singularities. This is a proper birational morphism, dropping the assumption that the source be smooth.  A partial resolution of a Gorenstein singularity is \emph{crepant} if  the canonical bundle pulls back to the canonical bundle.  A partial resolution of a symplectic singularity is  \emph{symplectic} if the pullback of the symplectic form over the smooth locus extends to a symplectic form on the smooth locus of the source\footnote{This extension is unique, and it makes the source a symplectic singularity.}. As before, a partial resolution of a symplectic singularity is crepant if and only if it is symplectic, so we usually will stick to the term \emph{crepant}. 
\end{rem}

% \subsection{Main Results}
% \textcolor{red}{Under construction} \\
 
 %We provide a new construction of nonprojective crepant resolutions of quiver varieties. For simplicity, we focus on framed star-shaped quivers with all dimensions equal to one. 
 %Though, as explained below, our examples hold when finitely many arrows are added arbitrarily.
    
%We will give examples both that are obtained as locally projective resolutions of projective partial resolutions, and ones that cannot be obtained via iteratively taking locally projective resolutions.
%Note that if the composition is to  be crepant, each individual resolution must also be crepant; if we introduce any discrepancy by extracting unnecessary divisors, it will necessarily stay.

\subsection{Constructions using locally projective resolutions} 
\begin{minipage}{0.7\textwidth}
Let $Q$ be the $D_4$ quiver with framing, shown to the right. Fix the dimension vector $\alpha = (1, 1, 1, 1, 1)$ and framing dimension $\beta = (0, 1, 1, 1, 1)$. Let $\theta = (2, -1, -1, -1, -1)$ and consider the Nakajima quiver variety $\cM_{\theta}(Q, \alpha)$, see Definition \ref{defn:Nakajima} and Subsection \ref{ss:unframed} below.

$\cM_{\theta}(Q, \alpha)$ has ${4 \choose 2} = 6$ singular points consisting of decomposable representations $V = W \oplus W'$ with 
\begin{align*}
\dim(W) \in \{ & (1, 1, 1, 0, 0), (1, 1, 0, 1, 0), (1, 1, 0, 0, 1), \\
&(1, 0, 1, 1, 0), (1, 0, 1, 0, 1), (1, 0, 0, 1, 1) \}. \\
\end{align*}
\end{minipage} \hspace*{0.6cm}
\begin{minipage}{0.15\textwidth}
\begin{tikzpicture}
   %Same nodes repeated -- translated horizontally by 5 units, now unframed 
    %Nodes
    \node[circle, fill=blue!20, draw = blue,  thick, circle, inner sep = 0.3ex] (central) at (5, 0) {$x$};
    \node[circle, fill=black!20, draw = black, thick, circle, inner sep = 0.3ex] (v1) at (5-3/2, 2) {1};
    \node[circle, fill=black!20, draw = black, thick, circle, inner sep = 0.3ex] (v2) at (5-1/2, 2) {2};
    \node[circle, fill=black!20, draw = black, thick, circle, inner sep = 0.3ex] (v3) at (5+1/2, 2) {3};
    \node[circle, fill=black!20, draw = black,  thick, circle, inner sep = 0.3ex] (v4) at (5+3/2, 2) {4};
    \node[rectangle, minimum size=.5cm, fill=red!20, draw = red,  thick, inner sep = 0.3ex] (y1) at (5-3/2, 4) {$y_1$\vspace*{1cm}};
     \node[rectangle, minimum size=.5cm, fill=red!20, draw = red,  thick, inner sep = 0.3ex] (y2) at (5-1/2, 4) {$y_2$};
      \node[rectangle, minimum size=.5cm, fill=red!20, draw = red,  thick, inner sep = 0.3ex] (y3) at (5+1/2, 4) {$y_3$};
       \node[rectangle, minimum size=.5cm, fill=red!20, draw = red,  thick, inner sep = 0.3ex] (y4) at (5+3/2, 4) {$y_4$};
    %Arrows
    \draw [-{Stealth[length=3mm]}, shorten >=1mm, shorten <=1mm]  (central) -- (v1);
    \draw [-{Stealth[length=3mm]}, shorten >=1mm, shorten <=1mm] (central) -- (v2);
    \draw [-{Stealth[length=3mm]}, shorten >=1mm, shorten <=1mm]  (central) -- (v3);
    \draw [-{Stealth[length=3mm]}, shorten >=1mm, shorten <=1mm]  (central) -- (v4);
    \draw [-{Stealth[length=3mm]}, shorten >=1mm, shorten <=1mm]  (y1) -- (v1);
    \draw [-{Stealth[length=3mm]}, shorten >=1mm, shorten <=1mm]  (y2) -- (v2);
    \draw [-{Stealth[length=3mm]}, shorten >=1mm, shorten <=1mm]  (y3) -- (v3);
    \draw [-{Stealth[length=3mm]}, shorten >=1mm, shorten <=1mm]  (y4) -- (v4);
\end{tikzpicture}
\end{minipage}

\noindent Each singular point is locally analytically isomorphic to the zero matrix in the cone: 
\[
\mathcal{N} := \{ M \in \Mat_{4 \times 4}(\bC) : M^2 = 0, \ \text{rank}(M) \leq 1 \}.
\]
Therefore, each singular point has a neighborhood that admits two crepant resolutions: the parabolic Springer resolution $T^*(\bP^3) \to \mathcal{N}$ and its dual $T^*(\bP^3)^{\vee} \to \mathcal{N}$.
%composition with the transpose restricted to $\mathcal{N}$. 
All $2^6 = 64$ choices of local resolutions glue to locally projective (and hence proper) resolutions, see \cite[Cor 3.39]{KS24}. 

We prove in Theorem \ref{thm:four-point} and Remark \ref{r:18-resolutions} below that 18 of these resolutions do not come from variation of geometric invariant theory (GIT) quotient, while the other 46 do. By \cite{BCS23} all \emph{projective} crepant resolutions of $\mathcal{M}_{\theta}(Q, \alpha)$ are given by  variation of GIT, so these 18 crepant resolutions are not projective.

We give many generalizations of this example. Notably:
\begin{itemize}
    \item We show that one can enlarge the vertex set and, when it is large enough, most proper crepant resolutions are not projective (Corollary \ref{cor:probability});
    \item We give an example from \cite{AP16} which is four-dimensional (Section \ref{ss:Proudfoot}), using eight vertices, with isolated singularities looking like nilpotent $3 \times 3$ matrices of rank at most one;
    \item We give examples of quivers with just three and four vertices with dimension vector greater than one (Section \ref{ss:fewer-vertices}).
\end{itemize}

\subsection{Constructions using good quotients of open subsets}
    It is convenient to construct the locally projective resolutions of $\cM_\theta(Q,\alpha)$ via good quotients of the form $U/\!/G_\alpha$ for $U \subseteq \mu^{-1}(0)$ an open subset, which need not be the semistable locus for a stability condition. In this sense the quotient we obtain generalizes the GIT construction. An analogous construction was considered in \cite{ADHL}, building Mori dream spaces as torus quotients of the  spectrum of their Cox ring.  Here, analogously to the construction of Mori dream spaces in \cite{BCS23}, we instead take quotients by non-abelian groups,  but  unlike in \cite{BCS23}, we do not restrict to projective morphisms given by the GIT construction. The result, as we demonstrate, is a flexible technique for constructing a large class of proper crepant resolutions. 

    We study in particular an example where $Q$ is a five-pointed star with framings of one at each external vertex (shown at right). In this case, $\cM_{0}(Q, \alpha)$ has a proper resolution $X \to \cM_{0}(Q, \alpha)$ that does not factor through the partial resolution $\cM_{\theta}(Q, \alpha) \to \cM_0(Q, \alpha)$ for any $\theta$, see Theorem \ref{t:five-point-git}. 
    
\noindent
\begin{minipage}{0.67\textwidth}
\hspace*{1em} Consequently, $X$ cannot be obtained by iteratively taking locally projective crepant partial resolutions as in the 4-pointed star example. This is because every locally projective resolution of a cone with a finite stratification as in \cite{KS24} is isomorphic to a projective one, given by straightening out the resolution near the cone point  (this includes every Nakajima quiver variety, and more generally every conical Poisson variety with finitely many symplectic leaves, see \emph{op.~cit.}). Since every projective resolution of $\cM_0(Q,\alpha)$ is of the form $\cM_\theta(Q,\alpha)$ thanks to \cite{BCS23}, we deduce the assertion.
\end{minipage} \hspace{.4cm}
\begin{minipage}{0.15\textwidth}
\begin{tikzpicture}
   %Same nodes repeated -- translated horizontally by 5 units, now unframed 
    %Nodes
    \node[circle, fill=blue!20, draw = blue,  thick, circle, inner sep = 0.3ex] (central) at (5, 0) {$x$};
    \node[circle, fill=black!20, draw = black, thick, circle, inner sep = 0.3ex] (v1) at (5-2, 2) {1};
    \node[circle, fill=black!20, draw = black, thick, circle, inner sep = 0.3ex] (v2) at (5-1, 2) {2};
    \node[circle, fill=black!20, draw = black, thick, circle, inner sep = 0.3ex] (v3) at (5+0, 2) {3};
    \node[circle, fill=black!20, draw = black,  thick, circle, inner sep = 0.3ex] (v4) at (5+1, 2) {4};
        \node[circle, fill=black!20, draw = black,  thick, circle, inner sep = 0.3ex] (v5) at (5+2, 2) {5};
    \node[rectangle, minimum size=.5cm, fill=red!20, draw = red,  thick, inner sep = 0.3ex] (y1) at (5-2, 4) {$y_1$\vspace*{1cm}};
     \node[rectangle, minimum size=.5cm, fill=red!20, draw = red,  thick, inner sep = 0.3ex] (y2) at (5-1, 4) {$y_2$};
      \node[rectangle, minimum size=.5cm, fill=red!20, draw = red,  thick, inner sep = 0.3ex] (y3) at (5+0, 4) {$y_3$};
       \node[rectangle, minimum size=.5cm, fill=red!20, draw = red,  thick, inner sep = 0.3ex] (y4) at (5+1, 4) {$y_4$};
         \node[rectangle, minimum size=.5cm, fill=red!20, draw = red,  thick, inner sep = 0.3ex] (y5) at (5+2, 4) {$y_5$};
    %Arrows
    \draw [-{Stealth[length=3mm]}, shorten >=1mm, shorten <=1mm]  (central) -- (v1);
    \draw [-{Stealth[length=3mm]}, shorten >=1mm, shorten <=1mm] (central) -- (v2);
    \draw [-{Stealth[length=3mm]}, shorten >=1mm, shorten <=1mm]  (central) -- (v3);
    \draw [-{Stealth[length=3mm]}, shorten >=1mm, shorten <=1mm]  (central) -- (v4);
    \draw [-{Stealth[length=3mm]}, shorten >=1mm, shorten <=1mm]  (central) -- (v5);
    \draw [-{Stealth[length=3mm]}, shorten >=1mm, shorten <=1mm]  (y1) -- (v1);
    \draw [-{Stealth[length=3mm]}, shorten >=1mm, shorten <=1mm]  (y2) -- (v2);
    \draw [-{Stealth[length=3mm]}, shorten >=1mm, shorten <=1mm]  (y3) -- (v3);
    \draw [-{Stealth[length=3mm]}, shorten >=1mm, shorten <=1mm]  (y4) -- (v4);
     \draw [-{Stealth[length=3mm]}, shorten >=1mm, shorten <=1mm]  (y5) -- (v5);
\end{tikzpicture}
\end{minipage}
    \begin{rem}
    In \cite{KS24}, we build crepant resolutions of  not-necessarily conical, stratified varieties by gluing locally projective resolutions compatibly. Our construction can be used e.g., to build all 64 crepant resolutions for the four-pointed star in the previous subsection. But this construction cannot build $X$ in the five-pointed star example. 

    To our knowledge these give the first examples of proper crepant resolutions of conical symplectic singularities shown not to be given by iteratively taking locally projective crepant resolutions as in \cite{KS24}.
    \end{rem}
    
We give a generalization of this construction to quivers containing sufficiently many independent paths between vertices.

\subsection{Relation to the work of Arbo--Proudfoot \cite{AP16}} Since many of our  examples are toric hyperk\"ahler, they can (at least according to \cite[Conjecture 5.8]{AP16}) be expressed via zonotopes, but for nonprojective resolutions, especially when they are not given by iterative locally projective partial resolutions, it seems nontrivial to do so, as this is in a quite different spirit. Also, it is not clear to us how to interpret the condition on a zonotope to produce a quiver variety. It would be interesting to make explicit connections between our constructions and those of \cite{AP16}.
    
We emphasize, however, that our constructions can also be used to produce examples in the setting of quiver varieties which are \emph{not} toric hyperk\"ahler (i.e., have dimensions at vertices greater than one), see Example \ref{exam:nonhyptertoric}.

\subsection{Relation to the work of Hubbard \cite{Hubbard}}
We learned the method using good quotients of $G$-stable open subsets from Austin Hubbard, following his paper \cite{Hubbard}, though the combinatorics are different here and the examples apply to general quivers.  Note though that, unlike in \cite{Hubbard} in the setting of hyperpolygon spaces, we do not discuss full classifications of proper crepant resolutions, and content ourselves with producing interesting families of examples.  This is partly because the general combinatorial classification of proper crepant resolutions in \cite{Hubbard} using Cox ring constructions explained in \cite{ADHL} results in a somewhat inexplicit answer for a general quiver, and it appears to be less illuminating than giving explicit examples.  As an illustration of this, in \cite{Hubbard} a nice combinatorial description of the number of proper crepant resolutions of hyperpolygon cones is given, but for a general quiver, even the number of projective crepant resolutions, equal by \cite{BCS23} to the number of regions of certain hyperplane arrangement complements, is difficult to explicitly interpret combinatorially. 
%We may come back to this in a later paper.
       
\subsection{Framed vs unframed quivers} \label{ss:unframed}
In the preceding description we spoke of framed Nakajima quiver varieties, partly to make the combinatorial data arguably simpler.  It is convenient to replace these by unframed quiver varieties as done by Crawley--Boevey in \cite[middle of page 261]{CB01}:  Nakajima's quiver variety for a framed quiver $Q$ with vertex set $Q_0$ and framing $w \in \bN^{Q_0}$ is isomorphic to that for an unframed quiver, called $\tilde Q$, defined by adding a single additional vertex, called $\infty$, with dimension one, and for each $i \in Q_0$, adding $w_i$ arrows $\infty \to i$. That is, given the dimension vector $v \in \bN^{Q_0}$, the new dimension $\tilde v \in \bN^{\tilde Q_0}$ is given as $\tilde v = (v,1)$, for $1$ the dimension at $\infty$. Given a stability condition $\theta \in \bZ^{Q_0}$ we consider the unique $\tilde \theta \in \bZ^{\tilde Q_0}$ with the property that $\tilde \theta \cdot \tilde v = 0$ (the condition we always required in the unframed setting).

Under this operation, the $n$-pointed star with a framing of $1$ at each exterior vertex becomes a bipartite graph with $2+n$ vertices and $2n$ arrows: the original central node and the new vertex $\infty$ each have an arrow to each of the remaining $n$ vertices. The case $n=4$ is shown with the framed vertices depicted using squares, central vertex in blue, and the $\infty$ vertex in red:
\[
\begin{tikzpicture}
    %Creates a blue circle for the central vertex. 
        %Name (central), Size 0.6, location (0, 0), label empty
    \node[circle, fill=blue, inner sep = 0.5ex] (central) at (0, 0) {};
    \node[circle, fill=black, inner sep = 0.5ex] (v1) at (-3/2, 2) {};
    \node[circle, fill=black, inner sep = 0.5ex] (v2) at (-1/2, 2) {};
    \node[circle, fill=black,inner sep = 0.5ex] (v3) at (1/2, 2) {};
    \node[circle, fill=black, inner sep = 0.5ex] (v4) at (3/2, 2) {};
    %Creates a square. Default is a little small so "minimum size=8pt" added
    \node[rectangle, fill=black, inner sep = 0.7ex] (f1) at (-3/2, 4) {};
    \node[rectangle, fill=black, inner sep = 0.7ex] (f2) at (-1/2, 4) {};
    \node[rectangle, fill=black, inner sep = 0.7ex] (f3) at (1/2, 4) {};
    \node[rectangle, fill=black, inner sep = 0.7ex] (f4) at (3/2, 4) {};
    %Arrow type: stealth looks nice, shorten so it doesn't touch source and target nodes
    \draw [-{Stealth[length=3mm]}, shorten >=1mm, shorten <=1mm] (central) -- (v1);
    \draw [-{Stealth[length=3mm]}, shorten >=1mm, shorten <=1mm] (central) -- (v2);
    \draw [-{Stealth[length=3mm]}, shorten >=1mm, shorten <=1mm] (central) -- (v3);
    \draw [-{Stealth[length=3mm]}, shorten >=1mm, shorten <=1mm] (central) -- (v4);
    \draw [-{Stealth[length=3mm]}, shorten >=1mm, shorten <=1mm] (f1) -- (v1);
    \draw [-{Stealth[length=3mm]}, shorten >=1mm, shorten <=1mm] (f2) -- (v2);
    \draw [-{Stealth[length=3mm]}, shorten >=1mm, shorten <=1mm] (f3) -- (v3);
    \draw [-{Stealth[length=3mm]}, shorten >=1mm, shorten <=1mm] (f4) -- (v4);

    %Same nodes repeated -- translated horizontally by 5 units, now unframed 
    %Nodes
    \node[circle, fill=blue, inner sep = 0.5ex] (central) at (5, 0) {};
    \node[circle, fill=black, inner sep = 0.5ex] (v1) at (5-3/2, 2) {};
    \node[circle, fill=black, inner sep = 0.5ex] (v2) at (5-1/2, 2) {};
    \node[circle, fill=black, inner sep = 0.5ex] (v3) at (5+1/2, 2) {};
    \node[circle, fill=black, inner sep = 0.5ex] (v4) at (5+3/2, 2) {};
    \node[circle, fill=red, inner sep = 0.5ex] (infinity) at (5, 4) {};
    %Arrows
    \draw [-{Stealth[length=3mm]}, shorten >=1mm, shorten <=1mm] (central) -- (v1);
    \draw [-{Stealth[length=3mm]}, shorten >=1mm, shorten <=1mm] (central) -- (v2);
    \draw [-{Stealth[length=3mm]}, shorten >=1mm, shorten <=1mm] (central) -- (v3);
    \draw [-{Stealth[length=3mm]}, shorten >=1mm, shorten <=1mm] (central) -- (v4);
    \draw [-{Stealth[length=3mm]}, shorten >=1mm, shorten <=1mm] (infinity) -- (v1);
    \draw [-{Stealth[length=3mm]}, shorten >=1mm, shorten <=1mm] (infinity) -- (v2);
    \draw [-{Stealth[length=3mm]}, shorten >=1mm, shorten <=1mm] (infinity) -- (v3);
    \draw [-{Stealth[length=3mm]}, shorten >=1mm, shorten <=1mm] (infinity) -- (v4);
\end{tikzpicture}
\]
This way of thinking of the aforementioned examples, although equivalent, is more symmetric,  can be more convenient, and can give rise to additional generalizations (when the dimension vector at the framing vertex is allowed to increase).
%We introduced the examples in the framed setting because a framed star appears to be a simpler and more well-studied (e.g., \cite{BCRSW}, \cite{CB_ICM}) than the unframed bipartite one.

\subsection*{Acknowledgments}
We would like to thank Austin Hubbard for patiently explaining the subtle combinatorics in his stimulating paper \cite{Hubbard}. We are grateful to Nick Proudfoot who pointed out the four-dimensional example (see Subsection \ref{ss:Proudfoot}). DK was supported by the European Research Council grant ERC-2019-ADG, the EPSRC Small Grant EP/Y033574/1, and thanks University of Hasselt, UMass Boston, and Queen Mary University London for the pleasant working conditions. 

\section{Preliminary results}
We work over the field $\bC$. We collect in this section some background information we will need.

\subsection{Nakajima quiver varieties}
\subsubsection{Definition}
Nakajima defined a class of moduli spaces of (semistable) $\bC$-representations of quivers satisfying so-called preprojective relations. The construction depends on a triple of combinatorial data: (1) a quiver, (2) a dimension vector, and (3) a stability parameter. 

%\subsubsection{Combinatorial Data}

A \emph{quiver} $Q = (Q_0, Q_1, s, t)$ is a directed graph with vertex set $Q_0$, arrow set $Q_1$, and source and target maps $s, t: Q_1 \to Q_0$ respectively. 
%The opposite quiver $Q^{op} = (Q_0, Q_1, t, s)$ has the same vertex and arrow set as $Q$, but with each arrow pointing in the opposite direction (i.e., with source and target maps swapped.) 
Given a quiver $Q$, its \emph{double quiver} $\overline{Q}$ has the same vertex set as $Q$ but with two arrows $a$ and $a^*$ for each arrow $a \in Q_1$, satisfying $s(a^*) = t(a)$ and $t(a^*) = s(a)$.

For example, 
\[
\raisebox{.4cm}{
 \begin{tikzpicture}
   %Same nodes repeated -- translated horizontally by 5 units, now unframed 
    %Nodes
    \node[] (Q) at (-1, 0) {$Q =$};
    \node[circle, fill=black!20, draw = black, thick, inner sep = 0.2ex] (v1) at (0, 0) {1};
    \node[circle, fill=black!20, draw = black, thick, inner sep = 0.2ex] (v2) at (2.5, 0) {2};
    \node[circle, fill=black!20, draw = black, thick, inner sep = 0.2ex] (v3) at (5, 0) {3};
    \node[] (a) at (1.25, 0.5) {$a$};
     \node[] (b) at (3.75, 0.5) {$b$};
    %Arrows
   % \draw [bend left=30, -{Stealth[length=3mm]}, shorten >=1mm, shorten <=1mm] (0.2, 0.3) to (2.8,0.3);
   % \draw [bend right=20, -{Stealth[length=3mm]}, shorten >=1mm, shorten <=1mm] (v1) to (v2);
    \draw [bend left=20, -{Stealth[length=3mm]}, shorten >=1mm, shorten <=1mm] (v1) to (v2);
    \draw [bend right=20, -{Stealth[length=3mm]}, shorten >=1mm, shorten <=1mm] (v3) to (v2);
\end{tikzpicture}} \hspace{1cm}
% \begin{tikzpicture}
    %Nodes
%    \node[] (Q) at (-1, 0) {$Q^{\text{op}} =$};
%    \node[circle, fill=black!20, draw = black, thick, inner sep = 0.3ex] (v1) at (0, 0) {1};
%    \node[circle, fill=black!20, draw = black, thick, inner sep = 0.3ex] (v2) at (2.5, 0) {2};
%    \node[circle, fill=black!20, draw = black, thick, inner sep = 0.3ex] (v3) at (5, 0) {3};
%     \node[] (da) at (1.25, -0.5) {$a^*$};
%     \node[] (db) at (3.75, -0.5) {$b^*$};
    %Arrows
   % \draw [bend left=30, -{Stealth[length=3mm]}, shorten >=1mm, shorten <=1mm] (0.2, 0.3) to (2.8,0.3);
   % \draw [bend right=20, -{Stealth[length=3mm]}, shorten >=1mm, shorten <=1mm] (v1) to (v2);
%    \draw [bend left=20, -{Stealth[length=3mm]}, shorten >=1mm, shorten <=1mm] (v2) to (v1);
%    \draw [bend right=20, -{Stealth[length=3mm]}, shorten >=1mm, shorten <=1mm] (v2) to (v3);
%\end{tikzpicture}
 \begin{tikzpicture} 
    %Nodes
    \node[] (Q) at (-1, 0) {$\overline{Q} =$};
    \node[circle, fill=black!20, draw = black, thick, inner sep = 0.2ex] (v1) at (0, 0) {1};
    \node[circle, fill=black!20, draw = black, thick, inner sep = 0.2ex] (v2) at (2.5, 0) {2};
    \node[circle, fill=black!20, draw = black, thick, inner sep = 0.2ex] (v3) at (5, 0) {3};
    %Arrows
   % \draw [bend left=30, -{Stealth[length=3mm]}, shorten >=1mm, shorten <=1mm] (0.2, 0.3) to (2.8,0.3);
   % \draw [bend right=20, -{Stealth[length=3mm]}, shorten >=1mm, shorten <=1mm] (v1) to (v2);
    \draw [bend left=20, -{Stealth[length=3mm]}, shorten >=1mm, shorten <=1mm] (v2) to (v1);
      \draw [bend left=20, -{Stealth[length=3mm]}, shorten >=1mm, shorten <=1mm] (v1) to (v2);
    \draw [bend right=20, -{Stealth[length=3mm]}, shorten >=1mm, shorten <=1mm] (v2) to (v3);
     \draw [bend right=20, -{Stealth[length=3mm]}, shorten >=1mm, shorten <=1mm] (v3) to (v2);
      \node[] (a) at (1.25, 0.5) {$a$};
     \node[] (b) at (3.75, 0.5) {$b$};
     \node[] (da) at (1.25, -0.5) {$a^*$};
     \node[] (db) at (3.75, -0.5) {$b^*$};
\end{tikzpicture}
\]
 Pick a \emph{dimension vector} $\alpha = (\alpha_i) \in \bN^{Q_0}$. Define the vector space
\[
\Rep_\alpha(Q) := \bigoplus_{a \in Q_1} \Hom_{\bC}( \bC^{\alpha_{s(a)}}, \bC^{\alpha_{t(a)}} ).
\]
There is a natural action of $G_\alpha := \prod_{i \in Q_0} \GL_{\alpha_i}(\bC)$ on $\Rep_\alpha(Q)$ by $(g_i) \cdot (\rho_a) = (g_{t(a)} \circ \rho_a \circ g_{s(a)}^{-1})$. The diagonal $\bC^*$ acts trivially on $\Rep_\alpha(Q)$ so the action descends to $PG_\alpha := G_\alpha/ \bC^*$.

In $\Rep_\alpha(\overline{Q})$, one can consider the subset of representations:
\[
 \mu^{-1}_\alpha(0) := \left \{ \rho \in \Rep_\alpha(\overline{Q}) \ \ \middle | \ \ \sum_{a \in Q_1} \rho(a) \rho(a^*) - \rho(a^*) \rho(a) = 0 \right \}.
\]
We write $\mu^{-1}(0)$ for $\mu^{-1}_{\alpha}(0)$ when $\alpha$ is clear from context. The action of $PG_\alpha$ restricts to $\mu^{-1}(0)$.

Fix $\theta \in \bZ^{Q_0}$ satisfying $\theta \cdot \alpha = 0$, where $a \cdot b := \sum_{i} a_i b_i$ denotes the standard dot product. Notice that such a $\theta$ gives rise to a character of $G_\alpha$
\[
\chi_{\theta} : G_\alpha \to \bC^* \hspace{1cm} \chi_{\theta}( (g_i)_{i \in Q_0}) = \prod_{i \in Q_0} \det(g_i)^{\theta_i}
\]
that descends to a character of $PG_\alpha$ since $\theta \cdot \alpha = 0$. Conversely, all characters of $PG_{\alpha}$ are of the form $\chi_{\theta}$ for some $\theta \in \bZ^{Q_0}$ orthogonal to $\alpha$.  

Following King \cite[Definition 1.1, Section 3]{King}, a subrepresentation $W \subset V \in \Rep_\alpha(Q)$ is \emph{$\theta$-destabilizing} if $\dim(W) \cdot \theta > 0$ (where $\dim(W) = (\dim W_i)_{i \in Q_0} \in \bN^{Q_0}$ records the dimension at each vertex). And $V \in \Rep_\alpha(Q)$ is \emph{$\theta$-semistable} if no such $\theta$-destabilizing subrepresentation exists. $V$ is further \emph{$\theta$-stable} if its only subrepresentations $W$ satisfying $\dim(W) \cdot \theta =0$ are $W = V$ and $W = 0$. Given a subset $Z \subset \Rep_\alpha(Q)$, we denote by $Z^{\theta-\text{ss}}$ (resp. $Z^{\theta-\text{s}}$) the subset of $\theta$-semistable (resp. $\theta$-stable) representations in $Z$. A \emph{$\theta$-polystable} representation is a direct sum of $\theta$-stable representations.  The loci of $\theta$-semistable and $\theta$-stable representations equals the   $\chi_\theta$-semistable and $\chi_\theta$-stable loci, respectively, in the language of GIT.

If a group $G$ acts on an affine variety $V$ and $\chi: G \to \bC^\times$ is a character, then 
\[
\bC[V]^\chi := \{f \in \bC[V] \mid g \cdot f = \chi(g) f  \ \text{ for all } g \in G\}
\]
denotes the functions on which $G$ acts by $\chi$. %\travis{Or is it the inverse?}  \dan{Ok as is}
\begin{defn}[\cite{Nakajima}] \label{defn:Nakajima}
    Fix a quiver $Q$, dimension vector $\alpha \in \bN^{Q_0}$, and a stability parameter $\theta \in \bZ^{Q_0}$.
 %\footnote{We will refer to the triple $(Q, \alpha, \theta)$ as quiver data.}. 
    The \emph{Nakajima quiver variety}, $\cM_{\theta}(Q, \alpha)$, is the quotient 
    \[
    \cM_{\theta}(Q, \alpha) := \mu^{-1}(0) /\!/_{\chi_{\theta}} PG_{\alpha}
    := \Proj \left ( \bigoplus_{n \geq 0} \bC[ \mu^{-1}(0) ]^{\chi_{\theta}^n} \right )
    \]
\end{defn}
By the GIT construction, there is a good quotient $\mu^{-1}(0)^{\theta-ss} \to \cM_{\theta}(Q,\alpha)$ by $G_\alpha$ (see Section \ref{ss:good-quotient} below for the definition of a good quotient).
The Proj construction ensures $\cM_{\theta}(Q, \alpha)$ is projective over $\cM_{0}(Q, \alpha) = \Spec( \bC[\mu_{\alpha}^{-1}(0)]^{G_{\alpha}} )$. Note that $\cM_{0}(Q, \alpha)$ is conical with a $\bC^*$-action contracting to the zero representation. 
%The map $\cM_{\theta}(Q, \alpha) \to \cM_{0}(Q, \alpha)$ is a (partial) resolution of singularities, provided $\cM_{\theta}(Q, \alpha)$ is nonempty. 

This paper will mostly consider the case $\alpha = (1, \dots, 1)$ so $G_{\alpha}$ is a torus and $\cM_{\theta}(Q, \alpha)$ is a toric hyperk\"{a}hler variety. In this setting the combinatorics dramatically simplifies, and yet many of our constructions extend to more general $\alpha$.  

\subsubsection{Local neighborhoods} \label{ss:local}
Let $V = V_1^{d_1} \oplus \dots \oplus V_m^{d_m}$ be a  $\theta$-polystable representation in $\cM_{\theta}(Q, \alpha)$. Following \cite[Corollary 4.10]{CB03} for $\theta=0$, and \cite[Theorem 3.3]{BS21} in general, $V$ has an \'{e}tale neighborhood $[V]_{\cM_{\theta}(Q, \alpha)}$ isomorphic to an \'{e}tale neighborhood of the zero representation $[0]_{\cM_{0}(Q^{\text{loc}}, d)}$ where $d = (d_i)$ and $Q^{\text{loc}}$ has the following combinatorial description.

The Euler bilinear form is defined by
\[
\langle -, - \rangle : \bZ^{Q_0} \times \bZ^{Q_0} \to \bZ \hspace{1cm} \langle \alpha, \beta \rangle = \sum_{i \in Q_0} \alpha_i \beta_i - \sum_{a \in Q_1} \alpha_{s(a)} \beta_{t(a)}.
\]
This form has symmetrization $( \alpha, \beta) := \langle \alpha, \beta \rangle + \langle \beta, \alpha \rangle$ and quadratic form $q(\alpha) := \frac{1}{2} (\alpha, \alpha)$. 

The local quiver $Q^{\text{loc}}$ at $V$ is defined, up to arbitrary choice of orientation, by the property of having double $\overline{Q}^{\text{loc}}$ with:
\begin{itemize}
\item a vertex $v_i$ for each representation $V_i$,
\item $\ell_i := 2-2q(\dim(V_i))$ loops at vertex $v_i$, and
\item $a_{i,j} := -(\dim(V_i), \dim(V_j))$ arrows from $v_i$ to $v_j$.
\end{itemize}

We observe that for many of our examples, the local description can be greatly simplified. First, if $\dim(V) = \alpha = (1, 1, \dots, 1)$ then (a) each $d_i = 1$, (b) the $V_i$ determine a collection of disjoint, connected subquivers of $\overline{Q}$ with Euler characteristic $q(\dim(V_i))$, 
 %so the Euler characteristic is one and consequently there are no loops
 and (c) $a_{i, j}$ is the number of arrows in $\overline{Q}$ from the support of $V_i$ to the support of $V_j$. Note that, in many of our examples the subquivers corresponding to $V_i$ are supported on subquivers which are trees, so that  $\ell_i =0$ and there are no loops in the local quiver.

 Additionally, if there are only two vertices $x, y$ with $\theta_x , \theta_y  \geq 0$ then $V = V_x \oplus V_y$ (as each subrepresentation $V_i$ satisfies $\sum_{i \in \text{supp}(V_i)} \theta_i = \theta \cdot \dim(V_i) = 0$ and hence some $\theta_j \geq 0$). The local quiver is then: \vspace*{-2cm}
 \[
\begin{tikzpicture}
    % Nodes
    \node[] (Q) at (-2.2, 0) {$Q^{\text{loc}}=$};
    \node[] (vdots) at (1.5, 0.05) {\vdots};
    \node[circle, fill=black!20, draw = black, thick, inner sep = 0.3ex] (v1) at (0, 0) {1};
    \node[circle, fill=black!20, draw = black, thick, inner sep = 0.3ex] (v2) at (3, 0) {2};

    % Arrows between vertices
    \draw [bend left=30, -{Stealth[length=3mm]}, shorten >=1mm, shorten <=1mm] (0.2, 0.3) to (2.8,0.3);
    \draw [bend right=20, -{Stealth[length=3mm]}, shorten >=1mm, shorten <=1mm] (v1) to (v2);
    \draw [bend left=20, -{Stealth[length=3mm]}, shorten >=1mm, shorten <=1mm] (v1) to (v2);

    % Loops at vertex 1
    \draw [looseness=10, in=150, out=210, min distance=7mm, -{Stealth[length=3mm]}, shorten >=1mm, shorten <=1mm] (v1) to (v1);
    \draw [looseness=19.5, in=120, out=240, min distance=13mm, -{Stealth[length=3mm]}, shorten >=1mm, shorten <=1mm] (v1) to (v1);
    \node[] at (-1.2,0) {\scalebox{0.9}{${\bf \cdots}$}};

    % Loops at vertex 2
    \draw [looseness=10, in=30, out=-30, min distance=7mm, -{Stealth[length=3mm]}, shorten >=1mm, shorten <=1mm] (v2) to (v2);
    \draw [looseness=19.5, in=60, out=-60, min distance=13mm, -{Stealth[length=3mm]}, shorten >=1mm, shorten <=1mm] (v2) to (v2);
    \node[] at (4.2,0) {\scalebox{0.9}{${\bf \cdots}$}};
\end{tikzpicture}\vspace*{-2cm}
\]

If $a_{1, 2} > 2$, then an \'{e}tale neighborhood of the zero representation $[0]_{\cM_{0}(Q^{\text{loc}}, d)}$ has exactly two projective crepant resolutions given by variation of stability parameter: $\theta = (1, -1)$ or $-\theta = (-1, 1)$. 

If $Q^{\text{loc}}$ has no loops, then these resolutions can also be described using the parabolic Springer resolution of the minimal nonzero nilpotent orbit closure in $\mathfrak{sl}_n$ for  $n=a_{1,2}$:
\begin{align*}
&\cM_{0}(Q^{\text{loc}}, (1, 1)) \cong \{ M \in \Mat_{n \times n}(\bC) : M^2 = 0, \text{rank}(M) \leq 1 \} \\
&\cM_{(1, -1)}(Q^{\text{loc}}, (1, 1)) \cong T^*(\bP^{n-1}) \\
&\cM_{(-1, 1)}(Q^{\text{loc}}, (1, 1)) \cong T^*(\bP^{n-1})^{\vee}.
\end{align*}
This will serve as the local model for the majority of examples in this paper. Note that, when $a_{1,2}=2$, this description is still accurate, except that $T^*\bP^1 \cong T^*(\bP^1)^\vee$ as resolutions, both being the minimal resolution of the type $A_1$ Du Val singularity---the unique projective crepant resolution.

Each loop $\ell$ in $Q^{\text{loc}}$ contributes to the local model via a product by the symplectic affine space $\bA^2$. Therefore the addition of loops does not alter the local classification of crepant (partial) resolutions. 

\subsubsection{Resolutions of singularities and stratifications}
%[Under construction]:
%Our constructions are fairly elementary, relying on the existing literature sparingly.

Under mild conditions, $\cM_{\theta}(Q,\alpha) \to \cM_0(Q,\alpha)$ is a projective crepant partial resolution of singularities for all $\theta$, and conversely, all projective crepant partial resolutions are of this form:
\begin{thm} \cite[Corollary 4.7]{BCS23} \label{thm:BCS} If $\alpha$ is a dimension vector with some $\alpha_i = 1$, such that there exists a simple representation in $\mu^{-1}(0)$, then the projective crepant partial resolutions of $\cM_{0}(Q, \alpha)$ are precisely the maps $\cM_{\theta}(Q, \alpha) \to \cM_0(Q,\alpha)$ for $\theta \in \bZ^{Q_0}$.
\end{thm}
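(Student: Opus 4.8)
The plan is to prove the two inclusions separately.

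\emph{Forward direction.} Each $\cM_\theta(Q,\alpha)\to\cM_0(Q,\alpha)$ is projective by the $\Proj$-over-$\Spec$ construction of Definition \ref{defn:Nakajima}. It is birational because a simple representation of dimension $\alpha$ in $\mu^{-1}(0)$ has no proper nonzero subrepresentations, hence is $\theta$-stable for every $\theta\in\bZ^{Q_0}$ with $\theta\cdot\alpha=0$; so $\mu^{-1}(0)^{\theta\text{-s}}$ is a nonempty $G_\alpha$-invariant open subset on which $PG_\alpha$ acts with trivial stabilizers, and it maps isomorphically onto the dense open subvariety of $\cM_0(Q,\alpha)$ which is the common image of the simple locus; over it $\cM_\theta\to\cM_0$ is an isomorphism. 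Finally $\cM_0$ and $\cM_\theta$ are symplectic singularities by \cite{BS21}, the morphism is Poisson, and the symplectic form on $\cM_0^{\sm}$ pulls back to that on $\cM_\theta^{\sm}$; so by the equivalence recalled in the introduction (a partial resolution of a symplectic singularity is crepant iff symplectic), it is a crepant partial resolution.

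\emph{Converse: reduction to the movable cone.} Let $\pi\colon Y\to\cM_0(Q,\alpha)$ be any projective crepant partial resolution. Since $\cM_0(Q,\alpha)$ is a conical symplectic singularity, its projective crepant partial resolutions are governed by a rational polyhedral \emph{movable cone}: fixing a projective $\bQ$-factorial terminalization $Y_0\to\cM_0$ (which exists by \cite{BCHM}), all projective crepant partial resolutions have canonically identified relative N\'eron--Severi spaces $N^1(\,\cdot\,/\cM_0)_\bR$ (they agree in codimension one), inside which the movable cone is the finite union of the nef cones of the various $\bQ$-factorial crepant partial resolutions; crepant partial resolutions of $\cM_0$ then correspond bijectively to the relative interiors of the faces of this chamber decomposition, with $Y$ going to the face spanned by its $\pi$-relatively nef classes. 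Thus it suffices to show the VGIT family $\{\cM_\theta\to\cM_0\}_{\theta\in\alpha^\perp}$ realizes every such face.

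\emph{Converse: Picard computation and wall-crossing.} One checks, using $\alpha_i=1$ and the simple-representation hypothesis (and the results of Crawley--Boevey on the moment map), that $\mu^{-1}(0)$ is normal, a complete intersection, with rational singularities, with $PG_\alpha$ acting generically freely and with non-stable locus of codimension $\ge 2$; descending equivariant line bundles, one obtains that for $\theta$ generic $N^1(\cM_\theta/\cM_0)_\bR$ is spanned by the tautological classes $[L_{\theta'}]$, $\theta'\in\alpha^\perp$, descended from the characters $\chi_{\theta'}$, with $L_\theta$ itself $\pi$-ample by GIT. Hence $\theta'\mapsto[L_{\theta'}]$ surjects $\alpha^\perp_\bR$ onto $N^1(Y_0/\cM_0)_\bR$ and carries the GIT wall-and-chamber decomposition onto a fan there: each open chamber lands in the ample cone of the corresponding $\cM_\theta$, which is smooth since any wall-crossing between generic chambers has codimension $\ge 2$ and so is a flop. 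As the movable cone is precisely the union of the nef cones of $\bQ$-factorial models, all related by such flops, the GIT fan exhausts it, and the face attached to $Y$ is the image of some $\theta'\in\alpha^\perp$. Then $Y$ and $\cM_{\theta'}$ are projective crepant partial resolutions of $\cM_0$ whose relatively ample classes agree under the codimension-one identification, so $Y\cong\Proj_{\cM_0}\bigoplus_{n\ge 0}\pi_*L^{\otimes n}\cong\cM_{\theta'}(Q,\alpha)$ over $\cM_0(Q,\alpha)$.

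\emph{Main obstacle.} The crux is the last paragraph: showing the tautological bundles span $N^1(\cM_\theta/\cM_0)_\bR$ and that GIT wall-crossing sees \emph{all} of the movable cone rather than a proper subcone. The former is a genuine computation with line bundles on the singular, non-GIT-quotient locus of $\cM_\theta$, resting on normality/factoriality and rational-singularity properties of $\mu^{-1}(0)$ that use both $\alpha_i=1$ and the simple-representation hypothesis. The latter is exactly where the gluing obstructions of \cite{KS24} could in principle intervene---and, as the present paper shows, genuinely do for merely \emph{proper} resolutions---so the argument must essentially exploit the projectivity of $Y$: it is the existence of a $\pi$-relatively ample bundle, hence of an honest class in the movable cone, that lets Namikawa's description of that cone force $Y$ into the GIT family.
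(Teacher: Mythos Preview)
This theorem is not proved in the present paper at all: it is quoted verbatim from \cite[Corollary 4.7]{BCS23} and used as a black box throughout (see the phrases ``thanks to \cite{BCS23}'' in the proofs of Theorems \ref{thm:four-point} and Corollary \ref{c:extend-resolutions}). So there is no ``paper's own proof'' to compare your attempt against; the relevant comparison is with the argument in \cite{BCS23} itself.

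Your outline is broadly faithful to the strategy of \cite{BCS23}: the forward direction is exactly as you say, and the converse does go through Namikawa's movable-cone description of projective crepant partial resolutions together with a linearization-map computation. Your ``Main obstacle'' paragraph correctly isolates the crux. However, as written your converse is a sketch rather than a proof. Two points in particular are not established:
\begin{itemize}
\item The surjectivity of the linearization map $\alpha^\perp_\bR \to N^1(\cM_\theta/\cM_0)_\bR$ is precisely the Kirwan surjectivity of McGerty--Nevins (stated in this paper as Theorem \ref{t:MN_Kirwansurj}). You assert ``One checks\ldots'' but do not actually check it; this is a substantial input, and your list of properties of $\mu^{-1}(0)$ (normal, complete intersection, rational singularities, generically free action, codimension-$\geq 2$ unstable locus) does not by itself yield surjectivity onto the Picard group---one needs the cohomological argument of \cite{MN_Kirwansurj} or an equivalent.
\item The claim that ``the GIT fan exhausts the movable cone'' is asserted via a flop argument (``all related by such flops''), but you have not shown that the GIT chambers form a \emph{connected} fan covering the whole movable cone rather than a proper subfan. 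In \cite{BCS23} this is handled by matching the GIT fan explicitly with Namikawa's fan, using the surjectivity above together with an identification of the walls; your sentence ``any wall-crossing between generic chambers has codimension $\geq 2$ and so is a flop'' gestures at this but does not prove coverage.
\end{itemize}
In short, your proposal is a correct high-level summary of the \cite{BCS23} approach, with the two load-bearing steps left as unfulfilled assertions.
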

Although we do not require it, in \cite{BCS23}, it is also explained when $\theta,\theta'$ define isomorphic partial resolutions (namely, if the GIT cones containing them have the same orbit under the Namikawa--Weyl group).
%if and only if they 
%Under the hypotheses of the theorem, $\cM_{\theta}(Q,\alpha) \to \cM_0(Q,\alpha)$ are all projective crepant partial resolutions.

Next, to construct \emph{locally projective} crepant resolutions of $\cM_{\theta}(Q, \alpha)$ we use the approach of \cite{KS24} relying on the natural Luna stratification \cite[\S 6]{Nakajima}, \cite[\S 3.v]{Nakajima98}, which is also the stratification by symplectic leaves:
%description of the singular locus of $\cM_{\theta}(Q, \alpha)$. 

 \begin{thm} %\cite[Theorem 2.3]{Kaledin_Poisson}, 
 \cite[Theorem 1.9]{BS21} \label{thm:BS}
 $\cM_{\theta}(Q, \alpha)$ has a finite stratification by symplectic leaves. Each symplectic leaf is given by a decomposition of $\alpha$ into summands orthogonal to $\theta$. 
 \end{thm}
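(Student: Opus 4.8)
The plan is to exhibit the symplectic leaves as the strata of the \emph{Luna stratification by representation type}, and to establish the leaf property using the \'etale-local model recalled in Section~\ref{ss:local}.

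\emph{Step 1: Poisson structure.} The space $\Rep_\alpha(\overline{Q})$ carries the standard symplectic form, and the $G_\alpha$-action on it is Hamiltonian with moment map $\mu$. Since $G_\alpha$-invariant functions are annihilated by the infinitesimal action, $\{g, I_\mu\}\subseteq I_\mu$ for every $G_\alpha$-invariant $g$, where $I_\mu$ is the ideal generated by the components of $\mu$; hence algebraic Hamiltonian reduction makes $\bC[\mu^{-1}(0)]^{G_\alpha}$, and equally the degree-zero invariant local sections of $\bigoplus_{n\geq 0}\bC[\mu^{-1}(0)]^{\chi_\theta^n}$, into Poisson algebras. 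Thus $\cM_0(Q,\alpha)$ and $\cM_\theta(Q,\alpha)$ are Poisson varieties and $\cM_\theta(Q,\alpha)\to\cM_0(Q,\alpha)$ is Poisson. On the $\theta$-stable locus $PG_\alpha$ acts freely and $0$ is a regular value of $\mu$, so $\mu^{-1}(0)^{\theta-\text{s}}/G_\alpha$ is a smooth symplectic variety.

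\emph{Step 2: The type strata.} A point of $\cM_\theta(Q,\alpha)$ is a $\theta$-polystable representation $V\cong S_1^{m_1}\oplus\cdots\oplus S_r^{m_r}$ with the $S_j$ pairwise non-isomorphic $\theta$-stable of dimension vector $\beta_j$, necessarily satisfying $\theta\cdot\beta_j=0$ and $\sum_j m_j\beta_j=\alpha$; by Krull--Schmidt the decomposition is unique, so the \emph{type} $\tau(V)$, i.e.\ the decomposition $\alpha=\sum_j m_j\beta_j$ into $\theta$-orthogonal roots recorded up to permuting equal $(m_j,\beta_j)$, is a well-defined invariant, and only finitely many types occur since $\beta_j\leq\alpha$ componentwise. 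Writing $\cM_\theta(Q,\alpha)_\tau$ for the locus of points of type $\tau$, these finitely many subsets partition $\cM_\theta(Q,\alpha)$. Choosing the $S_j$ amounts to choosing a point of the smooth symplectic variety $\mu_{\beta_j}^{-1}(0)^{\theta-\text{s}}/G_{\beta_j}$ for each $j$, so $\cM_\theta(Q,\alpha)_\tau$ is the complement of the diagonals (where two summands of equal dimension vector coincide) in $\prod_j \big(\mu_{\beta_j}^{-1}(0)^{\theta-\text{s}}/G_{\beta_j}\big)$, divided by the finite group permuting summands with equal $(m_j,\beta_j)$, which acts freely there; hence $\cM_\theta(Q,\alpha)_\tau$ is a smooth symplectic variety.

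\emph{Step 3: Strata are the symplectic leaves.} Fix $p=[V]\in\cM_\theta(Q,\alpha)_\tau$, and write $V=\bigoplus_i V_i^{d_i}$ with the $V_i$ the distinct stable summands and $d=(d_i)$. By Section~\ref{ss:local} (following \cite{CB03,BS21}), an \'etale neighborhood of $p$ in $\cM_\theta(Q,\alpha)$ is isomorphic to one of $0$ in $\cM_0(Q^{\text{loc}},d)$. The upgrade needed is that, by the algebraic Darboux--Weinstein/Luna slice theorem for the Hamiltonian $G_\alpha$-action along the semistable orbit over $p$, this isomorphism can be taken to be Poisson and to split: \'etale-locally, $\cM_\theta(Q,\alpha)$ near $p$ is, as a Poisson variety, the product of the symplectic variety $\cM_\theta(Q,\alpha)_\tau$ (near $p$) with the transverse conical Poisson slice, which by the formulas $\ell_i=2-2q(\dim(V_i))$ and $a_{ij}=-(\dim(V_i),\dim(V_j))$ defining $Q^{\text{loc}}$ is $\cM_0(Q^{\text{loc}},d)$ (up to a symplectic affine factor coming from any loops, absorbed into the first factor). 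Because the symplectic leaves of a product of a symplectic variety and a Poisson variety are the products of the first factor with the leaves of the second, and the leaves of the conical Poisson slice near $0$ are its Luna strata, with the closed one corresponding to $\tau$ itself, the symplectic leaf through $p$ is exactly $\cM_\theta(Q,\alpha)_\tau$. Running this over all $p$ and all $\tau$, the $\cM_\theta(Q,\alpha)_\tau$ are precisely the symplectic leaves: smooth, locally closed, finite in number, and partitioning $\cM_\theta(Q,\alpha)$, with the bijection leaf $\leftrightarrow\tau$ being the asserted correspondence with decompositions of $\alpha$ into $\theta$-orthogonal summands.

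\emph{Main obstacle.} The work is concentrated in Step~3: promoting the variety-level \'etale-local model of Section~\ref{ss:local} to a statement in the Poisson category that genuinely splits off the symplectic leaf --- an algebraic Darboux--Weinstein theorem for the singular reduction $\mu^{-1}(0)/\!/G_\alpha$ --- and then checking that the transverse slice carries its own Hamiltonian-reduction Poisson structure (not merely an abstract variety isomorphism) and that its closed stratum matches $\tau$, so that distinct strata are not merged. The latter is where the combinatorial identity relating $q_Q(\alpha)$, $q_{Q^{\text{loc}}}(d)$, and the $q(\dim(V_i))$ enters, to match dimensions. The remaining ingredients --- finiteness of the type list, the partition property, and smoothness of each stratum --- are then routine.
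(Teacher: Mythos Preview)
The paper does not prove this statement: Theorem~\ref{thm:BS} is quoted verbatim from \cite[Theorem~1.9]{BS21} and used as a black box, with no proof or sketch supplied. So there is nothing in the paper to compare your proposal against.

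That said, your outline is essentially the strategy of \cite{BS21}: identify the Luna/representation-type stratification, use the \'etale-local model of Section~\ref{ss:local} in its Poisson form, and read off that each stratum is a leaf. You have correctly isolated the genuine content in Step~3. Two points deserve sharpening. First, the splitting you invoke is not just ``algebraic Darboux--Weinstein'': what one actually uses is that the Luna slice at a closed orbit in $\mu^{-1}(0)^{\theta\text{-ss}}$ is itself a Hamiltonian reduction for the stabilizer $\prod_i \GL_{d_i}$, so the \'etale isomorphism $[V]_{\cM_\theta}\cong [0]_{\cM_0(Q^{\text{loc}},d)}$ is Poisson by construction, not by an abstract normal-form theorem; this is the argument in \cite{CB03} (for $\theta=0$) and \cite[Theorem~3.3]{BS21}. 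Second, your sentence ``the leaves of the conical Poisson slice near $0$ are its Luna strata'' is circular as written --- it is the same assertion for $(Q^{\text{loc}},d)$ that you are proving for $(Q,\alpha)$. What one actually needs, and what suffices, is the much weaker fact that $\{0\}$ is a leaf of $\cM_0(Q^{\text{loc}},d)$, which follows because the Poisson bracket there is homogeneous of negative weight for the contracting $\bC^\times$-action and hence vanishes at the cone point. With those two fixes your sketch becomes a correct proof.
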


%But, we provide an alternate construction of such resolutions. And Theorem \ref{thm:BS} is used primarily to count the number of crepant resolutions arising by variation of stability parameter. In our examples all \emph{projective} crepant resolutions of $\cM_{\theta}(Q, \alpha)$ are of this form.

We do not require the results of \cite{KS24}, because instead we define the locally projective crepant resolutions explicitly via quotients $U/\!/G_{\alpha}$ of $G_{\alpha}$-stable open sets $U \subseteq \mu_\alpha^{-1}(0)$.
%$ \to \cM_{0}(Q, \alpha)$. 

We show these resolutions are not projective by appealing to Theorem \ref{thm:BCS} and showing they are not of the form $\cM_{\theta}(Q, \alpha)$ for some $\theta$. %We suspect one could bypass Theorem \ref{thm:BCS} with an elementary argument that $\pi$ is not projective, but do not pursue this here. 

We additionally utilize foundational results (see \cite{Drezet-Luna-slice}) to show that $U/\!/G_{\alpha}$ is a geometric quotient. We recall the necessary background in the next subsection.

\subsection{Good quotients}\label{ss:good-quotient}

We recall the following standard definition (see, e.g., \cite{Drezet-Luna-slice}):
\begin{defn} Let $Y$ and $Z$ be varieties.
    A \emph{good} quotient $\pi: Y \to Z =: Y/\!/G$ is a $G$-invariant, affine, surjective morphism such that, for every open subset $U \subseteq Z$, the natural pullback map,
\[
\Gamma(U,\mathcal{O}_U) \to \Gamma(\pi^{-1}(U), \mathcal{O}_{\pi^{-1}(U)}),
\]
is an isomorphism, and for $W_1, W_2 \subseteq Y$ two $G$-stable closed subsets, $\pi(W_1)$ and $\pi(W_2)$ are disjoint.
\end{defn} 
Note that good quotients are also categorical quotients, so in particular they are canonical when they exist.

We collect the following basic results:
\begin{lem}\label{l:good-quotient}
\cite[Lemma 2.13]{Drezet-Luna-slice} If $\pi$ is a good quotient, then it induces a bijection between closed $G$-orbits and closed points of the target.
\end{lem}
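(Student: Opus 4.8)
The statement asserts that a good quotient $\pi : Y \to Z = Y/\!/G$ induces a bijection between closed $G$-orbits in $Y$ and closed points of $Z$. Since this is cited as \cite[Lemma 2.13]{Drezet-Luna-slice}, the cleanest route is to reproduce the standard Luna-slice-theory argument, which splits naturally into three parts: (i) every fiber of $\pi$ contains a unique closed orbit; (ii) the assignment (closed orbit) $\mapsto$ (image point) is injective; (iii) it is surjective. Throughout I will lean on the two defining properties of a good quotient: that $\pi$ is affine and surjective with $\mathcal O_Z = (\pi_*\mathcal O_Y)^G$, and the \emph{separation property} that disjoint $G$-stable closed subsets of $Y$ have disjoint images.

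\textbf{Step 1: each fiber contains a unique closed orbit.} Fix $z \in Z$ and consider the fiber $F = \pi^{-1}(z)$, a nonempty (by surjectivity) $G$-stable closed subset of $Y$. Since $\pi$ is affine, $F$ is itself an affine variety on which $G$ acts, so it contains a closed orbit of minimal dimension; existence of a closed orbit in $F$ is standard. For uniqueness, suppose $O_1, O_2 \subseteq F$ were two distinct closed $G$-orbits. They are closed in $Y$ (being closed in the closed subset $F$) and disjoint, so by the separation property $\pi(O_1) \cap \pi(O_2) = \emptyset$; but both equal $\{z\}$, a contradiction. Hence $F$ contains exactly one closed orbit, which I will denote $O_z$.

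\textbf{Step 2: injectivity and surjectivity.} For surjectivity: given any closed point $z \in Z$, Step 1 produces a closed orbit $O_z \subseteq \pi^{-1}(z)$ mapping to $z$. For injectivity: suppose $O, O'$ are closed $G$-orbits in $Y$ with $\pi(O) = \pi(O') = \{z\}$. Then both are contained in the fiber $\pi^{-1}(z)$, which is $G$-stable and closed, and both are closed in $Y$; by the uniqueness in Step 1 we conclude $O = O'$. Combining, $O \mapsto \pi(O)$ is a bijection from closed $G$-orbits onto closed points of $Z$.

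\textbf{Main obstacle.} The nontrivial input is the \emph{existence} of a closed orbit inside each affine $G$-stable set $F$ — everything else is a formal consequence of the separation axiom. In the generality of reductive $G$ acting on an affine variety this is classical (orbits of minimal dimension in a $G$-stable closed set are closed, since orbit boundaries consist of lower-dimensional orbits); one should either invoke this as standard or, staying faithful to the cited source, simply point to \cite{Drezet-Luna-slice} for the full argument. If one wanted to avoid orbit-dimension arguments entirely, an alternative is to note that $Z$ is covered by affine opens $Z_f$, that $\pi^{-1}(Z_f) = \Spec A_f$ with $A_f^G = \mathcal O_Z(Z_f)$, and then reduce to the affine GIT statement $\Spec A \twoheadrightarrow \Spec A^G$, where closed points of $\Spec A^G$ correspond to closed orbits in $\Spec A$ — but this again ultimately rests on the same reductivity facts, so for the purposes of this paper it is most efficient to cite \emph{op.~cit.}
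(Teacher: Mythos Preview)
Your argument is correct and is the standard proof. However, the paper does not actually prove this lemma: it is stated with a citation to \cite[Lemma 2.13]{Drezet-Luna-slice} and no proof is given. In fact, you anticipate this at the end of your proposal when you remark that ``for the purposes of this paper it is most efficient to cite \emph{op.~cit.}'' --- that is exactly the route the paper takes.
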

\begin{thm} \label{t:hilbert} \cite[Theorem 3.5]{Newstead},\cite{GIT}, \text{cf.~} \cite[Theorem 2.16]{Drezet-Luna-slice}
If $Y$ is an affine variety over $\bC$ and $G$ is complex reductive, then $Y/\!/G := \Spec \mathcal{O}(Y)^G$ is a good quotient.
\end{thm}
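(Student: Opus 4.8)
The plan is to verify the two defining conditions of a good quotient for the morphism $\pi \colon Y \to Y/\!/G := \Spec \mathcal{O}(Y)^G$ induced by the inclusion $\mathcal{O}(Y)^G \hookrightarrow \mathcal{O}(Y)$. First one records the standard structural facts: since $G$ is reductive over $\bC$ it is linearly reductive, so by the Hilbert--Nagata finiteness theorem $\mathcal{O}(Y)^G$ is a finitely generated $\bC$-algebra and $Y/\!/G$ is genuinely an affine variety, while $\pi$ is affine and $G$-invariant by construction. The engine for everything else is the \emph{Reynolds operator} $R \colon \mathcal{O}(Y) \to \mathcal{O}(Y)^G$: by complete reducibility of the rational $G$-representation $\mathcal{O}(Y)$ one has a canonical decomposition $\mathcal{O}(Y) = \mathcal{O}(Y)^G \oplus M$, where $M$ is the sum of the nontrivial isotypic components, $M$ is an $\mathcal{O}(Y)^G$-submodule, and $R$ is the projection onto the first summand. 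Thus $R$ is $\mathcal{O}(Y)^G$-linear, fixes invariants, and carries any $G$-stable ideal $I$ into $I \cap \mathcal{O}(Y)^G = I^G$.

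Next I would establish surjectivity of $\pi$ and, more generally, that $\pi$ maps a $G$-stable closed subset $W = V(I) \subseteq Y$ onto $V(I^G) \subseteq Y/\!/G$. For surjectivity it suffices to show that for every maximal ideal $\mathfrak{m} \subset \mathcal{O}(Y)^G$ the extension $\mathfrak{m}\,\mathcal{O}(Y)$ is proper: if $1 = \sum_i a_i f_i$ with $a_i \in \mathfrak{m}$ and $f_i \in \mathcal{O}(Y)$, then applying $R$ and using $\mathcal{O}(Y)^G$-linearity gives $1 = \sum_i a_i R(f_i) \in \mathfrak{m}$, a contradiction. The statement for a general $W$ reduces to surjectivity for the $G$-action on $W$ once one knows $(\mathcal{O}(Y)/I)^G = \mathcal{O}(Y)^G/I^G$, which follows by applying $R$ to the short exact sequence $0 \to I \to \mathcal{O}(Y) \to \mathcal{O}(Y)/I \to 0$ of rational $G$-modules (exactness of $G$-invariants for linearly reductive $G$). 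In particular $\pi(W)$ is closed.

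Then I would check the sheaf condition. Since both sides define sheaves on $Y/\!/G$, it suffices to verify that $\Gamma(U, \mathcal{O}_{Y/\!/G}) \to \Gamma(\pi^{-1}(U), \mathcal{O}_Y)^G$ is an isomorphism on a basis of distinguished opens $U = D(f)$ with $f \in \mathcal{O}(Y)^G$. There $\Gamma(D(f), \mathcal{O}_{Y/\!/G}) = (\mathcal{O}(Y)^G)_f$ and $\Gamma(\pi^{-1}(D(f)), \mathcal{O}_Y)^G = (\mathcal{O}(Y)_f)^G$, so the claim is that localization at an invariant element commutes with taking $G$-invariants; this is immediate from $\mathcal{O}(Y) = \mathcal{O}(Y)^G \oplus M$, since localization is exact and $M_f$ still contains no trivial $G$-subrepresentation. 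Finally, for the separation property, let $W_1 = V(I_1)$ and $W_2 = V(I_2)$ be disjoint $G$-stable closed subsets; by the Nullstellensatz $I_1 + I_2 = \mathcal{O}(Y)$, so write $1 = g_1 + g_2$ with $g_i \in I_i$ and apply $R$ to obtain $1 = R(g_1) + R(g_2)$ with $R(g_i) \in I_i^G$. Hence $I_1^G + I_2^G = \mathcal{O}(Y)^G$, so $V(I_1^G) \cap V(I_2^G) = \varnothing$ in $Y/\!/G$; combined with the previous paragraph, $\pi(W_1) \cap \pi(W_2) = V(I_1^G) \cap V(I_2^G) = \varnothing$.

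The main obstacle is entirely front-loaded into the reductivity hypothesis: the existence of the Reynolds operator (equivalently, exactness of the $G$-invariants functor) and Hilbert--Nagata finite generation are the only nonformal inputs, and once these are available the rest is bookkeeping with ideals and localizations. Over $\bC$ there is no subtlety in producing $R$, since reductive implies linearly reductive in characteristic zero (Weyl's unitarian trick, or averaging over a maximal compact subgroup), so no appeal to Haboush's theorem is needed.
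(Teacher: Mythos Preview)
Your argument is correct and is the standard proof via the Reynolds operator that one finds in the cited references. Note, however, that the paper does not supply its own proof of this statement: it is quoted as background with attributions to \cite{Newstead}, \cite{GIT}, and \cite{Drezet-Luna-slice}, so there is nothing in the paper to compare your approach against beyond those sources, with which your proof is in line.
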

\begin{defn}
A \emph{geometric} quotient is a good quotient such that $\pi(x) \neq \pi(y)$ whenever $G \cdot x \neq G \cdot y$.
\end{defn}
It follows from Lemma \ref{l:good-quotient} that a good quotient is geometric if and only if all $G$-orbits on $Y$ are closed.

\section{Locally projective constructions}\label{s:loc-proj}
    \subsection{Four-pointed star example} \label{ss:four-pointed}
    \begin{minipage}{0.7\textwidth}
    \setlength{\parindent}{15pt}
    \noindent Consider the four-pointed star with all dimensions one and a framing of one at each vertex.  For symmetry, it is more convenient for us to use the equivalent unframed formulation: $Q_0=\{x,1,2,3,4,y\}$, with $Q_1 = \{ \phi^x_i, \phi^y_i \}_{i = 1, \dots, 4}$ consisting of one arrow $\phi^x_i$ (resp. $\phi^y_i$) from $x$ (resp. $y$) to $i \in \{ 1, \ldots, 4 \}$. Thus, there are a total of eight arrows (shown right). 
    
     \indent Taking the dimension vector $\alpha = (1,1,1,1,1,1)$ and stability parameter $(0, 0, 0, 0, 0, 0)$ (i.e., no stability condition) yields the conical quiver variety $\cM_0(Q,\alpha)$.

    \indent The stability parameter $\theta = (2,-1,-1,-1,-1,2)$ gives a projective crepant partial resolution \vspace{-.3cm}
    \[
    \pi: \cM_\theta(Q,\alpha) \to \cM_0(Q,\alpha). \\
    \] 
    \end{minipage} \hspace{0.5cm} 
    \begin{minipage}{0.15\textwidth}
    \vspace*{-2cm}
    
   \[
\begin{tikzpicture}
   %Same nodes repeated -- translated horizontally by 5 units, now unframed 
    %Nodes
    \node[circle, fill=blue!20, draw = blue,  thick, circle, inner sep = 0.3ex] (central) at (0, 0) {$x$};
    \node[circle, fill=black!20, draw = black, thick, circle, inner sep = 0.3ex] (v1) at (-3/2, 2.5) {1};
    \node[circle, fill=black!20, draw = black, thick, circle, inner sep = 0.3ex] (v2) at (-1/2, 2.5) {2};
    \node[circle, fill=black!20, draw = black, thick, circle, inner sep = 0.3ex] (v3) at (1/2, 2.5) {3};
    \node[circle, fill=black!20, draw = black, thick, circle, inner sep = 0.3ex] (v4) at (3/2, 2.5) {4};
    \node[circle, fill=red!20, draw = red, thick, circle, inner sep = 0.3ex] (infinity) at (0, 5) {$y$};
    %Arrows
    \draw [-{Stealth[length=3mm]}, shorten >=1mm, shorten <=1mm] (central) -- (v1);
    \draw [-{Stealth[length=3mm]}, shorten >=1mm, shorten <=1mm] (central) -- (v2);
    \draw [-{Stealth[length=3mm]}, shorten >=1mm, shorten <=1mm] (central) -- (v3);
    \draw [-{Stealth[length=3mm]}, shorten >=1mm, shorten <=1mm] (central) -- (v4);
    \draw [-{Stealth[length=3mm]}, shorten >=1mm, shorten <=1mm] (infinity) -- (v1);
    \draw [-{Stealth[length=3mm]}, shorten >=1mm, shorten <=1mm] (infinity) -- (v2);
    \draw [-{Stealth[length=3mm]}, shorten >=1mm, shorten <=1mm] (infinity) -- (v3);
    \draw [-{Stealth[length=3mm]}, shorten >=1mm, shorten <=1mm] (infinity) -- (v4);
\end{tikzpicture}
   \]
   \end{minipage}
     \vspace{.3cm}
     
    %Consider the projective crepant partial resolution $\pi: \cM_\theta(Q,\alpha) \to \cM_0(Q,\alpha)$ where $\theta = (2,-1,-1,-1,-1,2)$.  
    The space $\cM_\theta(Q,\alpha)$ is a symplectic singularity, and hence is stratified by symplectic leaves \cite[Theorem 2.3]{Kaledin_Poisson}. In this case, each leaf is given by a decomposition of the dimension vector into summands orthogonal to $\theta$, see \cite[Theorem 1.9]{BS21}. These decompositions are:
    \begin{itemize}
        \item trivial $\alpha = \alpha$, corresponding to the open symplectic leaf;
        \item of the form $\alpha = \alpha^x + \alpha^y$, where $\alpha^x = e_x + e_{k} + e_{\ell}$ for some $1 \leq k < \ell \leq 4$, and $\alpha^y = \alpha-\alpha^x=e_y + e_{k'} + e_{\ell'}$ for $\{k,\ell,k',\ell'\} = \{1,2,3,4\}$. This yields ${4 \choose 2} = 6$ closed symplectic leaves.
    \end{itemize}
    As explained in Subsection \ref{ss:local}, each closed symplectic leaf has local singularity described by a local quiver with two vertices (one for each summand) and four arrows between these vertices.  This singularity is well known to be isomorphic to the locus of square-zero, rank-one matrices of size $4\times 4$
    (i.e., the minimal nonzero nilpotent orbit closure in $\mathfrak{sl}_4$), which admits exactly two nonisomorphic projective crepant resolutions: each given by variation of GIT quotient for stability parameters $\{ \pm \theta \}$ where $\theta := (a,-a)$ with $a > 0$.
    
    Since these leaves are isolated singularities, locally projective crepant resolutions are uniquely determined by an arbitrary choice of projective resolution of small neighborhoods of each singularity\footnote{This observation is well-known but also Corollary 3.39 of \cite{KS24}.}.
    %all closed and non-intersecting one can deduce from the main result of \cite{KS24} that
    Therefore, there are precisely $2^6=64$ locally projective crepant resolutions of $\cM_\theta(Q,\alpha)$. We will explicitly construct these $64$ resolutions.
    
    Let $\Phi = \{e_x+e_i+e_j: 1 \leq i < j \leq 4\}$ be the set of roots containing $e_x$ orthogonal to $\theta$.  We will label the $64$ resolutions by maps $s: \Phi \to \{+, -\}$ giving a sign to each element of $\Phi$.   
    
    \begin{defn}\label{d:Us}
        Given $s: \Phi \to \{+, -\}$ we define the open subset $U_s \subseteq \mu^{-1}(0)^{\theta-\text{ss}}$ of all $\theta$-semistable representations such that, for every $\beta \in \Phi$, if $s(\beta)=+$, then there exists a nonzero arrow from a vertex in the support of $\beta$ to one in the support of $\alpha-\beta$; and if $s(\beta)=-$, there exists a nonzero arrow in the opposite direction. 
    \end{defn}
    
    Then we see that $U_s /\!/ G_\alpha \to \mu^{-1}(0)^{\theta-\text{ss}} /\!/ G_\alpha$ is locally given over a point of a closed symplectic leaf as a projective crepant resolution, with isomorphism class depending precisely on the value of $s$ on the root contained in its decomposition.  Thus, we get $64$ locally projective crepant resolutions. Since this is proper and crepant, by composition $U_s /\!/ G_\alpha \to \cM_0(Q,\alpha)$ is proper and crepant. Further, it is a resolution since the source is smooth.  

For convenience of notation, let $\beta_{i,j} := e_x + e_i+e_j$.
\begin{thm}\label{thm:four-point} Not all of the $64$ locally projective crepant resolutions of $\cM_\theta(Q,\alpha)$ are projective over $\cM_0(Q,\alpha)$.  In particular, if for some partition $\{1,2,3,4\} = \{i,j\} \sqcup \{k,\ell\}$, we have:
\begin{gather}
s(\beta_{i,j}) = s(\beta_{k,\ell}) = +, \label{n=4-cond1} \\
s(\beta_{i,k}) = s(\beta_{j,\ell}) = -, \label{n=4-cond2}
\end{gather}
then the resolution $U_s / \!/ G_{\alpha}$ is not projective over $\cM_0(Q,\alpha)$.
\end{thm}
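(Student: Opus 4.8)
The plan is to show that the putative resolution $U_s/\!/G_\alpha$ fails to be semiprojective over $\cM_0(Q,\alpha)$ by exhibiting a curve-theoretic obstruction: a complete curve in a fiber of $U_s/\!/G_\alpha \to \cM_0(Q,\alpha)$ on which no line bundle pulled back from $\cM_0(Q,\alpha)$, twisted by any $\chi_\theta$, can be ample. Concretely, by Theorem~\ref{thm:BCS} every \emph{projective} crepant resolution of $\cM_0(Q,\alpha)$ is of the form $\cM_{\theta'}(Q,\alpha)$ for some $\theta' \in \bZ^{Q_0}$; so it suffices to prove that $U_s/\!/G_\alpha$ is not isomorphic, \emph{as a variety over $\cM_0(Q,\alpha)$}, to any $\cM_{\theta'}(Q,\alpha)$. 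I would do this by comparing the two local resolutions that $U_s/\!/G_\alpha$ induces at a pair of the six closed symplectic leaves, and showing the pattern of signs in \eqref{n=4-cond1}--\eqref{n=4-cond2} is incompatible with any single global GIT parameter.

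The key steps, in order. First, make explicit the correspondence between a stability parameter $\theta'$ and the induced local resolution at each leaf: given $\theta'$ (which we may assume lies in the GIT chamber of $\theta$, i.e.\ $\theta'_x+\theta'_i+\theta'_j$ is nonzero for all $i<j$ and has the sign of $\theta_x+\theta_i+\theta_j = 2-1-1 = 0$—wait, this is zero for $\theta$, so the relevant datum is a \emph{generic} perturbation). Precisely, for $\theta'$ generic the resolution $\cM_{\theta'}(Q,\alpha)$ restricted to a neighborhood of the leaf labeled by $\beta_{i,j}$ is the $T^*\bP^3$ or $(T^*\bP^3)^\vee$ resolution according to the sign of $\theta'\cdot\beta_{i,j} = \theta'_x + \theta'_i + \theta'_j$; one identifies which sign gives which by tracing through the semistability condition (a $\theta'$-semistable representation near such a point must have the arrows pointing "away from" the destabilizing subobject). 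Second, match this against Definition~\ref{d:Us}: the sign $s(\beta)$ records exactly the direction of the surviving nonzero arrow out of $\mathrm{supp}(\beta)$, so $U_s/\!/G_\alpha$ agrees near the leaf of $\beta_{i,j}$ with $\cM_{\theta'}(Q,\alpha)$ precisely when $\sgn(\theta'\cdot\beta_{i,j})$ matches $s(\beta_{i,j})$ (up to a fixed global orientation convention, which I would fix once and for all). Third, derive the contradiction: if $U_s/\!/G_\alpha \cong \cM_{\theta'}(Q,\alpha)$ over $\cM_0(Q,\alpha)$ then the function $\beta \mapsto \sgn(\theta'\cdot\beta)$ on $\Phi$ must equal $s$ (after fixing signs), but \eqref{n=4-cond1}--\eqref{n=4-cond2} force
\[
\theta'_x+\theta'_i+\theta'_j > 0,\quad \theta'_x+\theta'_k+\theta'_\ell > 0,\quad \theta'_x+\theta'_i+\theta'_k < 0,\quad \theta'_x+\theta'_j+\theta'_\ell < 0,
\]
and adding the first two against the last two gives $2\theta'_x + \theta'_i+\theta'_j+\theta'_k+\theta'_\ell > 0$ and $< 0$ simultaneously—a contradiction. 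Hence $U_s/\!/G_\alpha$ is not of the form $\cM_{\theta'}(Q,\alpha)$, so by Theorem~\ref{thm:BCS} it is not projective over $\cM_0(Q,\alpha)$.

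The subtle point—and the one that must be argued carefully rather than waved through—is Step~1 together with the orientation bookkeeping in Step~2: one must verify that the local resolution $\cM_{\theta'}(Q,\alpha)$ near the leaf of $\beta$ really is determined by $\sgn(\theta'\cdot\beta)$ alone (and not by finer data of $\theta'$), and that the translation between "direction of the GIT inequality" and "direction of the nonzero arrow in Definition~\ref{d:Us}" is consistent across all six leaves with a \emph{single} choice of orientation of the local quivers. This is exactly the content of the local model $\cM_0(Q^{\mathrm{loc}},(1,1))\cong\{M^2=0,\ \mathrm{rk}\,M\le 1\}$ with its two resolutions $T^*\bP^{n-1}$, $(T^*\bP^{n-1})^\vee$ recalled in Subsection~\ref{ss:local}; I would invoke the étale-local isomorphism of \cite[Theorem 3.3]{BS21} to reduce to that model, and then note that the two resolutions are exchanged by the involution $a \leftrightarrow a^*$ on the double quiver, which is precisely what flips both $\sgn(\theta'\cdot\beta)$ and the arrow direction, so the correspondence is well-defined once a reference orientation is chosen. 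Everything downstream is the elementary linear inequality above, which has no solution; this is Remark~\ref{r:18-resolutions}'s count of which sign-patterns $s$ are realizable by a global $\theta'$, and the eighteen that are not are exactly those containing a "crossing" pattern \eqref{n=4-cond1}--\eqref{n=4-cond2}.
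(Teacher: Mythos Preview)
Your proposal is correct and follows essentially the same approach as the paper: reduce via Theorem~\ref{thm:BCS} to showing that no $\theta'$ realizes the sign pattern $s$, then derive the contradiction from the identity $\beta_{i,j}+\beta_{k,\ell}=\beta_{i,k}+\beta_{j,\ell}$, which is exactly your inequality $2\theta'_x+\theta'_i+\theta'_j+\theta'_k+\theta'_\ell$ being simultaneously positive and negative. The only difference is in how you justify that an isomorphism $U_s/\!/G_\alpha \cong \cM_{\theta'}(Q,\alpha)$ over $\cM_0(Q,\alpha)$ forces $s=s_{\theta'}$: you argue via the \'etale-local model at each leaf and worry about orientation bookkeeping, whereas the paper observes directly that $\mu_\alpha^{-1}(0)^{\theta'\text{-ss}}=U_{s_{\theta'}}$ for $s_{\theta'}(\beta)=\sgn(\theta'\cdot\beta)$ and then appeals to the fact (established by construction) that the $64$ resolutions $U_s/\!/G_\alpha$ are pairwise nonisomorphic over $\cM_\theta(Q,\alpha)$, hence over $\cM_0(Q,\alpha)$. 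This sidesteps your ``subtle point'' entirely, since no local comparison or orientation convention is needed---the matching $s=s_{\theta'}$ follows from the global nonisomorphism statement. (Your opening mention of a curve-theoretic obstruction is a red herring; neither you nor the paper actually produces a curve.)
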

\begin{proof} Thanks to \cite{BCS23}, all projective crepant resolutions of $\cM_0(Q,\alpha)$ are given by variation of GIT quotient, i.e., are of the form $\cM_{\theta'}(Q,\alpha)$. Moreover, such a map  factors through $\cM_\theta(Q,\alpha)$ if and only if $\theta$ is in the boundary of the GIT chamber containing $\theta'$. 

Given such a $\theta'$, we have $\mu_{\alpha}^{-1}(0)^{\theta'-ss}=U_{s_{\theta'}}$,
%$\cM_{\theta'}(Q,\alpha) \cong U_{s_{\theta'}}/\!/G$ as resolutions of $\cM_{\theta}(Q,\alpha)$, 
where $s_{\theta'}: \Phi \to \{+,-\}$ is determined by $s_{\theta'}(\beta) = \sgn(\theta' \cdot \beta)$. 

By construction, the resolutions $U_{s_{\theta'}}/\!/G_\alpha$ are pairwise nonisomorphic $\cM_\theta(Q,\alpha)$. As a result they are also nonisomorphic over $\cM_0(Q,\alpha)$, since the map $\cM_\theta(Q,\alpha) \to \cM_0(Q,\alpha)$ is birational, so the condition for an isomorphism to induce the identity on $\cM_\theta(Q,\alpha)$ is equivalent to inducing the identity over $\cM_0(Q,\alpha)$. 

%By Lemma \ref{l:nonisomorphic-different-base} below, they are also nonisomorphic \emph{over $\cM_0(Q,\alpha)$}. So given $\theta'$ as above, the map $s_{\theta'}$ is unique such that $\cM_\theta'(Q,\alpha)$ is isomorphic to $U_{s'}/\!/G_\alpha$.
%%[A hole: we need to show that two different $s'$ give rise to nonisomorphic crepant resolutions of $\cM_0(Q,\alpha)$; we have proved this over the base $\cM_{\theta}(Q,\alpha)$.]

Now let $s: \Phi \to \{+,-\}$ satisfy \eqref{n=4-cond1},\eqref{n=4-cond2}. 
To show that there is no $\theta'$ such that $\cM_{\theta'}(Q,\alpha) \cong U_s/\!/G_\alpha$ over $\cM_0(Q,\alpha)$, it suffices to show that $s \neq s_{\theta'}$ for any stability condition $\theta'$. 

Suppose $s_{\theta'}$ satisfies \eqref{n=4-cond1}, \eqref{n=4-cond2}. Then additivity of the dot product implies:
\begin{align*}
\theta' \cdot \beta_{i,j}, \ \ \theta' \cdot \beta_{k,\ell} > 0
&\implies \theta' \cdot (\beta_{i,j} +\beta_{k,\ell}) > 0 \\
\theta' \cdot \beta_{i,k}, \ \ \theta' \cdot \beta_{j,\ell} < 0 
&\implies \theta' \cdot (\beta_{i,k} +\beta_{j,\ell}) < 0.
\end{align*}
But $\beta_{i, j} + \beta_{k, \ell} = \beta_{i, k} + \beta_{j, \ell}$ so
\[
\theta' \cdot (\beta_{i, j} + \beta_{k, \ell}) = \theta' \cdot (\beta_{i, k} + \beta_{j, \ell})
\]
a contradiction. 
\end{proof}

\begin{rem}\label{r:18-resolutions}
    For each (2, 2)-partition of $\{ 1, 2, 3, 4\}$, there are $4+4-1=7$ choices of resolution satisfying \eqref{n=4-cond1},\eqref{n=4-cond2}. Each pair of partitions has exactly one of these in common, which is not in common with the other partition. So in total, Theorem \ref{thm:four-point} shows that $3\cdot 7 - 3 = 18$ of the $64$ locally projective crepant resolutions of $\cM_\theta(Q,\alpha)$ define nonprojective crepant resolutions of  $\cM_0(Q,\alpha)$. The remaining 46 are projective over $\cM_0(Q,\alpha)$, as we verified using a computer.
\end{rem}

\subsection{Other graphs with six vertices}
Using the same proof as for Theorem \ref{thm:four-point} we can establish the same result for other unframed quivers with six vertices.  These can also be viewed as framed quivers with five vertices, as in Section \ref{ss:unframed}, by deleting one node and replacing it by framings at the adjacent nodes. We found these graphs by looking for the minimal ones for which the argument of Theorem \ref{thm:four-point} applies. Note that Remark \ref{r:add-arrows} shows that adding arrows does not affect the validity of the construction.
\begin{cor}{}
  The following graphs can be oriented arbitrarily to give a quiver $Q$ whose quiver variety $\cM_{0}(Q, \alpha)$ has a proper, nonprojective crepant resolution. It can be constructed by taking locally projective resolutions of $\cM_\theta(Q,\alpha)$ with $\theta=(2,-1,-1,-1,-1,2)$. 
\end{cor}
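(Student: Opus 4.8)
The plan is to generalize the argument of Theorem \ref{thm:four-point} verbatim, identifying the combinatorial feature of the $D_4$ case that made it work. The key structural input was: there is a singular stratum whose local quiver has two vertices $x,y$ (both with $\theta \geq 0$) joined by $\geq 2$ arrows, so it admits exactly two projective crepant resolutions $\pm\theta^{\mathrm{loc}}$; the $64$ locally projective resolutions $U_s/\!/G_\alpha$ are indexed by sign functions $s$ on the set $\Phi$ of $\theta$-orthogonal roots $\beta$ containing $e_x$; and, crucially, the monoid of functions $\beta \mapsto \operatorname{sgn}(\theta'\cdot\beta)$ arising from genuine stability parameters $\theta'$ is constrained by the additivity relation $\theta'\cdot(\beta+\beta')=\theta'\cdot(\beta+\beta')$ applied to a \emph{linear dependence} $\beta_{i,j}+\beta_{k,\ell}=\beta_{i,k}+\beta_{j,\ell}$ among four roots in $\Phi$. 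So first I would set up, for each of the listed graphs, the dimension vector $\alpha=(1,\dots,1)$ and $\theta=(2,-1,-1,-1,-1,2)$, invoke Theorem \ref{thm:BS} to describe the symplectic leaves via $\theta$-orthogonal decompositions $\alpha=\alpha^x+\alpha^y$, and check (as in Subsection \ref{ss:local}) that each closed leaf's local singularity is the square-zero rank-$\leq 1$ locus in $\mathfrak{sl}_n$ with $n\geq 2$, hence has exactly two projective crepant resolutions; then define $U_s$ exactly as in Definition \ref{d:Us} and observe that gluing gives $2^{|\Phi|}$ locally projective (hence proper) crepant resolutions.

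The second step is to run the nonprojectivity obstruction. By Theorem \ref{thm:BCS}, every projective crepant resolution of $\cM_0(Q,\alpha)$ is some $\cM_{\theta'}(Q,\alpha)$, and those factoring through $\cM_\theta(Q,\alpha)$ have $\mu_\alpha^{-1}(0)^{\theta'\text{-ss}}=U_{s_{\theta'}}$ with $s_{\theta'}(\beta)=\operatorname{sgn}(\theta'\cdot\beta)$; since $\cM_\theta(Q,\alpha)\to\cM_0(Q,\alpha)$ is birational, $U_s/\!/G_\alpha$ is projective over $\cM_0$ iff $s=s_{\theta'}$ for some $\theta'$. So I need, in each graph, to exhibit four distinct roots $\beta_1,\beta_2,\beta_3,\beta_4\in\Phi$ with $\beta_1+\beta_2=\beta_3+\beta_4$, and then a sign function $s$ with $s(\beta_1)=s(\beta_2)=+$, $s(\beta_3)=s(\beta_4)=-$: no $s_{\theta'}$ can equal such an $s$, because additivity would force $\theta'\cdot(\beta_1+\beta_2)>0$ and $\theta'\cdot(\beta_3+\beta_4)<0$, contradicting $\beta_1+\beta_2=\beta_3+\beta_4$. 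The remaining values of $s$ off $\{\beta_1,\beta_2,\beta_3,\beta_4\}$ can be set arbitrarily; this gives at least one (in fact many) nonprojective resolution per graph.

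The main obstacle is the first step applied uniformly to ``the following graphs'': one must verify for each candidate that $\Phi$ genuinely contains four roots in a relation $\beta_1+\beta_2=\beta_3+\beta_4$ with the four roots distinct and each lying in $\Phi$ (i.e. each is a dimension vector of a $\theta$-orthogonal summand supporting an actual subrepresentation, with $x$ in its support), and that the associated local quivers have $\geq 2$ arrows so that genuinely two local resolutions exist — if a local quiver had only one arrow or the singularity were smooth, that stratum would contribute no choice and the counting/argument would need adjusting. Since these graphs were, per the text, selected precisely as minimal ones for which ``the argument of Theorem \ref{thm:four-point} applies,'' I expect each to contain a $D_4$-type sub-configuration (four outer vertices, a relation among the six pairwise root-sums) so the verification is a finite, graph-by-graph check; Remark \ref{r:add-arrows} then handles the extra arrows, guaranteeing that enlarging the arrow set of any such graph preserves both the existence of the $2^{|\Phi|}$ locally projective resolutions and the validity of the obstruction. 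I would therefore state the corollary's proof as: ``The proof of Theorem \ref{thm:four-point} applies verbatim once one checks, for each listed graph, the existence of four distinct roots $\beta_1,\dots,\beta_4\in\Phi$ with $\beta_1+\beta_2=\beta_3+\beta_4$; we record this check in the figure/table below,'' and include the explicit root relation for each graph.
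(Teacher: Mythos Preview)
Your proposal is correct and takes essentially the same approach as the paper: the paper's proof fixes the specific partition $\{1,2\}\sqcup\{3,4\}$, verifies that in each listed graph the eight vertex sets $\{x,1,2\},\{x,3,4\},\{x,1,4\},\{x,2,3\}$ and their $y$-counterparts support connected subgraphs (hence subrepresentations), sets $s(\beta_{1,2})=s(\beta_{3,4})=+$ and $s(\beta_{1,4})=s(\beta_{2,3})=-$, and then invokes the argument of Theorem~\ref{thm:four-point} verbatim. One small slip: you write that the local $\mathfrak{sl}_n$ singularity with $n\geq 2$ has exactly two projective crepant resolutions, but for $n=2$ the two coincide (the $A_1$ du Val case), so the relevant check is $m_\beta>2$---this holds for the four roots used here, so the argument goes through.
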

 \begin{tikzpicture}
   %Same nodes repeated -- translated horizontally by 5 units, now unframed 
    %Nodes
    \node[circle, fill=blue!20, draw = blue,  thick, circle, inner sep = 0.5ex] (x) at (0, -2) {$x$};
    \node[circle, fill=black!20, draw = black, thick, circle, inner sep = 0.5ex] (v1) at (-1, 0) {1};
    \node[circle, fill=black!20, draw = black, thick, circle, inner sep = 0.5ex] (v2) at (0, -.65) {2};
    \node[circle, fill=black!20, draw = black, thick, circle, inner sep = 0.5ex] (v3) at (1, 0) {3};
    \node[circle, fill=black!20, draw = black,  thick, circle, inner sep = 0.5ex] (v4) at (0, 0.65) {4};
    \node[circle, fill=red!20, draw = red,  thick, circle, inner sep = 0.5ex] (y) at (0, 2) {$y$};
    %Arrows
    \draw [-, shorten >=1mm, shorten <=1mm] (x) -- (v1);
    \draw [-, shorten >=1mm, shorten <=1mm] (x) -- (v3);
    \draw [-, shorten >=1mm, shorten <=1mm] (v1) -- (v2);
    \draw [-, shorten >=1mm, shorten <=1mm] (v1) -- (v4);
    \draw [-, shorten >=1mm, shorten <=1mm] (v2) -- (v3);
    \draw [-, shorten >=1mm, shorten <=1mm] (v3) -- (v4);
    \draw [-, shorten >=1mm, shorten <=1mm] (y) -- (v1);
    \draw [-, shorten >=1mm, shorten <=1mm] (y) -- (v3);
\end{tikzpicture}  \hspace{.7cm}
\begin{tikzpicture}
   %Same nodes repeated -- translated horizontally by 5 units, now unframed 
    %Nodes
    \node[circle, fill=blue!20, draw = blue,  thick, circle, inner sep = 0.5ex] (x) at (0, -2) {$x$};
    \node[circle, fill=black!20, draw = black, thick, circle, inner sep = 0.5ex] (v1) at (-1, 4/5) {1};
    \node[circle, fill=black!20, draw = black, thick, circle, inner sep = 0.5ex] (v2) at (-1, -4/5) {2};
    \node[circle, fill=black!20, draw = black, thick, circle, inner sep = 0.5ex] (v3) at (1, 4/5) {3};
    \node[circle, fill=black!20, draw = black,  thick, circle, inner sep = 0.5ex] (v4) at (1, -4/5) {4};
    \node[circle, fill=red!20, draw = red,  thick, circle, inner sep = 0.5ex] (y) at (0, 2) {$y$};
    %Arrows
    \draw [-, shorten >=1mm, shorten <=1mm] (x) -- (v2);
    \draw [-, shorten >=1mm, shorten <=1mm] (x) -- (v4);
    \draw [-, shorten >=1mm, shorten <=1mm] (v1) -- (v2);
    \draw [-, shorten >=1mm, shorten <=1mm] (v1) -- (v4);
    \draw [-, shorten >=1mm, shorten <=1mm] (v2) -- (v3);
    \draw [-, shorten >=1mm, shorten <=1mm] (v3) -- (v4);
    \draw [-, shorten >=1mm, shorten <=1mm] (y) -- (v1);
    \draw [-, shorten >=1mm, shorten <=1mm] (y) -- (v3);
\end{tikzpicture} \hspace{.7cm}
\begin{tikzpicture}
   %Same nodes repeated -- translated horizontally by 5 units, now unframed 
    %Nodes
    \node[circle, fill=blue!20, draw = blue,  thick, circle, inner sep = 0.5ex] (central) at (5, 0) {$x$};
    \node[circle, fill=black!20, draw = black, thick, circle, inner sep = 0.5ex] (v1) at (5-1.65, 2) {1};
    \node[circle, fill=black!20, draw = black, thick, circle, inner sep = 0.5ex] (v4) at (5-1/2, 2) {4};
    \node[circle, fill=black!20, draw = black, thick, circle, inner sep = 0.5ex] (v3) at (5+.65, 2) {3};
    \node[circle, fill=black!20, draw = black,  thick, circle, inner sep = 0.5ex] (v2) at (5+1.75, 2) {2};
    \node[circle, fill=red!20, draw = red,  thick, circle, inner sep = 0.5ex] (infinity) at (5, 4) {$y$};
    %Arrows
    \draw [-, shorten >=1mm, shorten <=1mm] (central) -- (v1);
    \draw [-, shorten >=1mm, shorten <=1mm] (central) -- (v2);
    \draw [-, shorten >=1mm, shorten <=1mm] (central) -- (v3);
    \draw [-, shorten >=1mm, shorten <=1mm] (v1) -- (v4);
    \draw [-, shorten >=1mm, shorten <=1mm] (infinity) -- (v1);
    \draw [-, shorten >=1mm, shorten <=1mm] (infinity) -- (v2);
    \draw [-, shorten >=1mm, shorten <=1mm] (infinity) -- (v3);
    \draw [-, shorten >=1mm, shorten <=1mm] (v3) -- (v4);
\end{tikzpicture}  \hspace{.7cm}
\begin{tikzpicture}
   %Same nodes repeated -- translated horizontally by 5 units, now unframed 
    %Nodes
    \node[circle, fill=blue!20, draw = blue,  thick, circle, inner sep = 0.5ex] (central) at (5, 0) {$x$};
    \node[circle, fill=black!20, draw = black, thick, circle, inner sep = 0.5ex] (v1) at (5-3/2, 2) {1};
    \node[circle, fill=black!20, draw = black, thick, circle, inner sep = 0.5ex] (v2) at (5-.35, 2.35) {2};
    \node[circle, fill=black!20, draw = black, thick, circle, inner sep = 0.5ex] (v3) at (5+.35, 2-.35) {3};
    \node[circle, fill=black!20, draw = black,  thick, circle, inner sep = 0.5ex] (v4) at (5+3/2, 2) {4};
    \node[circle, fill=red!20, draw = red,  thick, circle, inner sep = 0.5ex] (infinity) at (5, 4) {$y$};
    %Arrows
    \draw [-, shorten >=1mm, shorten <=1mm] (central) -- (v1);
    \draw [-, shorten >=1mm, shorten <=1mm] (central) -- (v3);
    \draw [-, shorten >=1mm, shorten <=1mm] (v2) -- (v3);
    \draw [-, shorten >=1mm, shorten <=1mm] (central) -- (v4);
    \draw [-, shorten >=1mm, shorten <=1mm] (infinity) -- (v1);
    \draw [-, shorten >=1mm, shorten <=1mm] (infinity) -- (v2);
    \draw [-, shorten >=1mm, shorten <=1mm] (v1) -- (v2);
    \draw [-, shorten >=1mm, shorten <=1mm] (v3) -- (v4);
    \draw [-, shorten >=1mm, shorten <=1mm] (infinity) -- (v4);
\end{tikzpicture}
 
\begin{proof}
    Fix the partition $\{ 1, 2, 3, 4 \} = \{ 1, 2 \} \sqcup \{ 3, 4 \}$. Notice each graph above has a collection of eight connected subgraphs with vertices
\begin{align*}
&\{ x, 1, 2 \}, &&\{ x, 3, 4 \}, &&\{ x, 1, 4 \}, &&\{ x, 2, 3 \}, \\
&\{ y, 1, 2 \}, &&\{ y, 3, 4 \}, &&\{ y, 1, 4 \}, &&\{ y, 2, 3 \}.
\end{align*}
    Hence each set above has a subrepresentation supported on it. We can define $s$ satisfying  
    \[
    s( e_x + e_1 +e_2) = + = s(e_x + e_3 +e_4), \hspace{1cm} 
    s( e_x + e_1 +e_4) = - = s(e_x + e_2 +e_3)
    \]
    yielding the proper crepant resolution $U_{s} /\!/ G_{\alpha}$.  The argument given in Theorem \ref{thm:four-point} applies verbatim to see that this resolution is not projective.
\end{proof}
\begin{rem}
    Following Remark \ref{r:18-resolutions}, we can count precisely how many locally projective resolutions of $\cM_\theta(Q,\alpha)$ are nonprojective over $\cM_0(Q,\alpha)$. For the first and third graphs depicted, there are four decompositions corresponding to four isolated singularities, so $16$ locally projective resolutions, of which two are nonprojective, and the other $14$ are projective as a consequence of Remark \ref{r:18-resolutions}.  For the second and fourth graphs depicted, there is a fifth decomposition which does not affect the nonprojectivity and we get $4$ nonprojective resolutions out of $32$. 
    %For the third graph depicted, the decompositions are the same as in the first graph, so again $2$ out of the $16$ locally projective crepant resolutions of $\cM_\theta(Q,\alpha)$ are nonprojective over $\cM_0(Q,\alpha)$. The last graph listed has FINISH THIS.%The last graph listed is the same as in Theorem \ref{thm:four-point} and discussed in Remark \ref{r:18-resolutions}.
    \end{rem}
\begin{rem}\label{r:add-arrows} Every result in this subsection holds if we add additional arrows to the quiver (see Corollary \ref{c:extend-resolutions}). This includes any unframed quiver on $6$ vertices containing the above complete bipartite one, and all dimensions $1$. By the correspondence from framed to unframed quivers this also includes all framed quivers on $5$ vertices containing a four-pointed star and all dimensions 1. The external framings need to be at least one, with an arbitrary framing at the central vertex. 

The only difference with the four-pointed star example is that the neighborhoods of points at the $6$ symplectic leaves will now be described by a quiver on $2$ vertices with $m \geq 4$ arrows between them (and loops at the vertices), still with dimension vector $(1,1)$. The singularity type is now isomorphic to the locus of square-zero, rank-one matrices of size $m \times m$, still admitting exactly two projective crepant resolutions as above. However, now the singularities need no longer be isolated, so \emph{a priori} there could be a monodromy obstruction (as in \cite{KS24}) to extending a choice of local resolution at a singular point along the whole stratum containing it.  This obstruction vanishes thanks to the explicit construction of resolution we give here; alternatively, in work in progress, we will study this obstruction for general quiver varieties, and we will see that it vanishes in this case because the decompositions of the dimension vector here have distinct summands. 
\end{rem}

\subsection{A four-dimensional example} \label{ss:Proudfoot}

After seeing the four-pointed example in Section \ref{ss:four-pointed}, Nick Proudfoot showed us the following four-dimensional example, equivalent to \cite[Figure 2]{AP16}. This is the minimal dimension of a symplectic resolution of a cone which is not projective, since in the two-dimensional case one only has du Val singularities, whose minimal resolutions are projective and crepant.

\noindent
\begin{minipage}{0.75\textwidth}
  \setlength{\parindent}{15pt}
  
\indent Let $Q$ be the quiver shown with $Q_0=\{x,1,2,3,4,5,6,y\}$ and arrows $x \to 1,2,3$, $y \to 4,5,6$, and $i \to i+3$ for $i = 1, 2, 3$. By Corollary \ref{c:extend-resolutions}, we can again consider enlargements with more arrows.

\indent Consider the quiver variety $\cM_{\theta}(Q, \alpha)$ with $\alpha=(1,1,1,1,1,1,1,1)$ and stability parameter 
$\theta = (3,-1,-1,-1,-1,-1,-1,3)$.

\indent Equivalently, as a framed quiver, we consider  a star with three legs of length two, and framings of one on each of the three endpoints. The framed stability parameter is $\theta = (3,-1,-1,-1,-1,-1,-1)$. 

\indent Then the non-open leaves of $\cM_\theta(Q,\alpha)$ for $\alpha=(1,1,1,1,1,1,1,1)$ correspond to the decompositions $\alpha = \beta + (\alpha-\beta)$ where 
\[
\beta \in \Phi := \{\beta_{i,i+3,j}: i,j \in \{1,2,3\}: i \neq j\} \cup \{\beta_{1,2,3}\}, \quad \beta_{i,j,k} := e_x+e_i+e_j+e_k.
\]
\end{minipage} \hspace{0.5cm}
\begin{minipage}{0.1\textwidth}
\[
\begin{tikzpicture}
   %Same nodes repeated -- translated horizontally by 5 units, now unframed 
    %Nodes
    \node[circle, fill=blue!20, draw = blue,  thick, circle, inner sep = 0.5ex] (central) at (0, 0) {$x$};
    \node[circle, fill=black!20, draw = black,  thick, circle, inner sep = 0.5ex] (v1) at (-1, 2) {1};
    \node[circle, fill=black!20, draw = black, thick, circle, inner sep = 0.5ex] (v2) at (0, 2) {2};
    \node[circle, fill=black!20, draw = black, thick, circle, inner sep = 0.5ex] (v3) at (1, 2) {3};
    \node[circle, fill=black!20, draw = black,  thick, circle, inner sep = 0.5ex] (v4) at (-1, 4) {4};
     \node[circle, fill=black!20, draw = black,  thick, circle, inner sep = 0.5ex] (v5) at (0, 4) {5};
      \node[circle, fill=black!20, draw = black,  thick, circle, inner sep = 0.5ex] (v6) at (1, 4) {6};
    \node[circle, fill=red!20, draw = red, thick, circle, inner sep = 0.5ex] (infinity) at (0, 6) {$y$};
    %Arrows
    \draw [-{Stealth[length=3mm]}, shorten >=1mm, shorten <=1mm] (central) -- (v1);
    \draw [-{Stealth[length=3mm]}, shorten >=1mm, shorten <=1mm] (central) -- (v2);
    \draw [-{Stealth[length=3mm]}, shorten >=1mm, shorten <=1mm] (central) -- (v3);
    \draw [-{Stealth[length=3mm]}, shorten >=1mm, shorten <=1mm] (v1) -- (v4);
    \draw [-{Stealth[length=3mm]}, shorten >=1mm, shorten <=1mm] (v2) -- (v5);
    \draw [-{Stealth[length=3mm]}, shorten >=1mm, shorten <=1mm] (v3) -- (v6);
    \draw [-{Stealth[length=3mm]}, shorten >=1mm, shorten <=1mm] (infinity) -- (v4);
    \draw [-{Stealth[length=3mm]}, shorten >=1mm, shorten <=1mm] (infinity) -- (v5);
    \draw [-{Stealth[length=3mm]}, shorten >=1mm, shorten <=1mm] (infinity) -- (v6);
\end{tikzpicture}
\]
\end{minipage} 

Note that $|\Phi|=7$, and the elements each define zero-dimensional leaves. \\
In other words, $\cM_\theta(Q,\alpha)$ has seven isolated singularities.

Each singularity corresponds to a local quiver with two vertices connected by three arrows. Therefore, the singularity is isomorphic to square-zero $3 \times 3$ matrices of rank at most one.  Each singularity has two projective crepant resolutions: $T^* \mathbb{P}^2$ and $T^* (\mathbb{P}^2)^\vee$. Hence, there are $2^7=128$ different choices of local crepant resolutions, which all glue to locally projective crepant resolution.  As before, we label such resolutions by functions $s: \Phi \to \{+,-\}$, each yielding the resolution $U_s/\!/G_\alpha$ given by Definition \ref{d:Us}.
\begin{thm}
For the functions $s$ satisfying the following condition, the locally projective resolution $U_s/\!/G_\alpha \to \mathcal{M}_\theta(Q,\alpha)$ is nonprojective over $\cM_0(Q,\alpha)$:
\[
s(\beta_{i,i+3,j}) = \begin{cases} +, \quad \text{if $j\equiv i+1 \pmod 3$}, \\
-, \quad \text{if $j \equiv i-1 \pmod 3$}.
\end{cases}
\]
The same holds for the opposite signs $-s$.
\end{thm}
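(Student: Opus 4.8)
The plan is to follow the proof of Theorem~\ref{thm:four-point} essentially verbatim, the one new ingredient being that the quadrilateral relation $\beta_{i,j}+\beta_{k,\ell}=\beta_{i,k}+\beta_{j,\ell}$ used there is replaced by a cyclic (triangular) relation among the roots in $\Phi$. First I would invoke \cite{BCS23} (Theorem~\ref{thm:BCS}): every projective crepant resolution of $\cM_0(Q,\alpha)$ has the form $\cM_{\theta'}(Q,\alpha)$, and if such a resolution were isomorphic over $\cM_0(Q,\alpha)$ to $U_s/\!/G_\alpha$---which by construction factors through $\cM_\theta(Q,\alpha)$---then $\cM_{\theta'}(Q,\alpha)\to\cM_0(Q,\alpha)$ would also factor through $\cM_\theta(Q,\alpha)$. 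Hence $\theta$ lies in the closure of the GIT chamber of $\theta'$ and $\mu_\alpha^{-1}(0)^{\theta'-\text{ss}}=U_{s_{\theta'}}$, where $s_{\theta'}(\beta)=\sgn(\theta'\cdot\beta)$. As in Theorem~\ref{thm:four-point}, the locally projective crepant resolutions of $\cM_\theta(Q,\alpha)$ (whose non-smooth locus consists of the seven isolated points indexed by $\Phi$) are pairwise non-isomorphic over $\cM_\theta(Q,\alpha)$, equivalently over $\cM_0(Q,\alpha)$ since $\cM_\theta(Q,\alpha)\to\cM_0(Q,\alpha)$ is birational. So it suffices to show that no stability parameter $\theta'$ satisfies $s_{\theta'}=s$, and none satisfies $s_{\theta'}=-s$.

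The key step is to exhibit the obstructing linear relation. Under the stated hypothesis the roots $\beta_{1,4,2},\beta_{2,5,3},\beta_{3,6,1}$ carry the sign $+$ and the roots $\beta_{1,4,3},\beta_{2,5,1},\beta_{3,6,2}$ carry the sign $-$; pairing each $+$ root with the $-$ root sharing its first two indices gives
\[
\bigl(\beta_{1,4,2}-\beta_{1,4,3}\bigr)+\bigl(\beta_{2,5,3}-\beta_{2,5,1}\bigr)+\bigl(\beta_{3,6,1}-\beta_{3,6,2}\bigr)=(e_2-e_3)+(e_3-e_1)+(e_1-e_2)=0,
\]
so that $\theta'$ paired with the left-hand side vanishes for every $\theta'$. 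On the other hand, if $s_{\theta'}=s$ then $\theta'\cdot\beta_{1,4,2},\,\theta'\cdot\beta_{2,5,3},\,\theta'\cdot\beta_{3,6,1}>0$ while $\theta'\cdot\beta_{1,4,3},\,\theta'\cdot\beta_{2,5,1},\,\theta'\cdot\beta_{3,6,2}<0$, so each of the three parenthesized differences pairs strictly positively with $\theta'$ and their sum is strictly positive---contradicting that it is $0$. If $s_{\theta'}=-s$, all six inequalities reverse and the same sum is strictly negative, again a contradiction. I would remark that $\beta_{1,2,3}$ does not occur in this relation, which is precisely why $s(\beta_{1,2,3})$ is left unconstrained in the statement.

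I do not anticipate a genuine obstacle; the argument is a bookkeeping variant of Theorem~\ref{thm:four-point}, and the only inputs requiring care are the two inherited from there: (a) the identification $\mu_\alpha^{-1}(0)^{\theta'-\text{ss}}=U_{s_{\theta'}}$ when $\theta$ lies in the closure of the chamber of $\theta'$, which holds because the only destabilizing decompositions near $\theta$ are the $\beta\in\Phi$ labelling the leaves of $\cM_\theta(Q,\alpha)$; and (b) that distinct functions $s$ give resolutions of $\cM_\theta(Q,\alpha)$ non-isomorphic over $\cM_\theta(Q,\alpha)$, which again holds because the relevant singularities are isolated---each a square-zero, rank-$\le 1$ locus in $\Mat_{3\times 3}(\bC)$, with exactly the two crepant resolutions $T^*\bP^2$ and $T^*(\bP^2)^\vee$. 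The genuinely new observation is simply the displayed cyclic relation, which the sign pattern ``$j\equiv i\pm1\pmod 3$'' is engineered to violate.
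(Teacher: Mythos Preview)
Your proposal is correct and follows essentially the same approach as the paper: reduce to showing no $\theta'$ has $s_{\theta'}=s$ (exactly as in Theorem~\ref{thm:four-point}), then exhibit the obstructing identity $\beta_{1,4,2}+\beta_{2,5,3}+\beta_{3,6,1}=\beta_{1,4,3}+\beta_{2,5,1}+\beta_{3,6,2}$, which is precisely the relation the paper uses (you write it as a telescoping sum of differences, but it is the same equation).
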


\begin{rem}
Note that $s(\beta_{1,2,3})$ is arbitrary in the theorem, so there are a total of four functions $s$ and hence four nonprojective resolutions $U_{s} /\!/G_{\alpha}$. A computer calculation revealed that the remaining 124 crepant resolutions are projective.
\end{rem}

\begin{proof}
The proof is the same as for Theorem \ref{thm:four-point}, except that we need to give a different proof that there is no $\theta'$ such that the sign of $\theta' \cdot \beta$ equals $s(\beta)$ for all $\beta \in \Phi$. 
%This is actually a special case of the statement of Theorem \ref{thm:general_nonprojective}: 
Observe that
\[
\beta_{1,4,2}+\beta_{2,5,3}+\beta_{3,6,1} = \beta_{1,4,3}+\beta_{2,5,1}+\beta_{3,6,2}.
\]
There does not exist $\theta'$ with positive dot product with the left-hand side and negative dot product with the right-hand side, and hence $U_s/\!/G_\alpha \to \cM_0(Q,\alpha)$ cannot be projective.
\end{proof}

\subsection{Quivers with fewer vertices and higher dimensions}\label{ss:fewer-vertices}
We now consider examples where $|Q_0|=3$ and $4$, at the price of increasing dimensions at vertices (so some $\alpha_v > 1$).  %(Note that, for hyperpolygon spaces, where the dimension at the center of an $n$-pointed unframed star is increased to $2$, it is shown in \cite{Hubbard} that nonprojective resolutions exist for $n \geq 6$.) 
These are the first examples we are aware of in the literature with $|Q_0| < 6$. The three vertex example is minimal in the sense that if $|Q_0| < 3$ then any locally projective crepant resolution of $\cM_{\theta}(Q, \alpha)$ is projective.
%Here we give some examples with only three or four vertices, the first we are aware of (and these are minimal in the sense that one can show that it is impossible to construct a nonprojective resolution using a locally projective resolution of a projective one with $\leq 2$ vertices).
%, we must have more than two vertices, since for two vertices, if $\theta$ is nonzero, it is generic, and the singular locus of $\cM_\theta(Q,\alpha)$ has a unique minimal stratum which corresponds to the decomposition $\alpha = c \alpha'$ for $c=\gcd(\alpha_1,\alpha_2)$ the maximum positive integer dividing both entries of $\alpha$, and $\alpha':=\alpha/c$. As explained in \cite{BS21}, the singularity here does not admit a projective crepant resolution unless it is either already smooth ($c=1$ or $\cM_{\theta}(Q,\alpha)$ is a point), $\cM_\theta(Q,\alpha')$ a surface (so the singularity is of the form $\text{Sym}^c(\bC^2)$), or $c=2$ and $\dim \cM_\theta(Q,\alpha') = 4$, the O'Grady case, where a projective symplectic resolution is unique and given by taking the blow-up of the singular locus.) 

\subsubsection{Three Vertex Example}
\begin{minipage}{0.2\textwidth}
\[
\raisebox{.7cm}{
\begin{tikzpicture}
   % Same nodes repeated -- translated horizontally by 5 units, now unframed 
    % Nodes
    \node[circle, fill=blue!20, draw = blue, thick, inner sep = 0.5ex] (central) at (0, -1.5) {$x$};
    \node[circle, fill=black!20, draw = black, thick, inner sep = 0.5ex] (v1) at (-1.5, 0) {1};
    \node[circle, fill=black!20, draw = black, thick, inner sep = 0.5ex] (v2) at (1.5, 0) {2};
    %Straight edges
    \draw [-, shorten >=1mm, shorten <=1mm] (central) to (v1);
    \draw [-, shorten >=1mm, shorten <=1mm] (central) to (v2);
    %Bent edges
    \draw [-, shorten >=1mm, shorten <=1mm, bend right = 20] (central) to (v1);
    \draw [-, shorten >=1mm, shorten <=1mm, bend left = 20] (central) to (v2);
    %More bent edges
    \draw [-, shorten >=1mm, shorten <=1mm, bend left=20] (central) to (v1);
    \draw [-, shorten >=1mm, shorten <=1mm, bend right=20] (central) to (v2);
\end{tikzpicture}
}
\]
\end{minipage} \hspace*{.7cm}
\begin{minipage}{0.75\textwidth}
Let $Q$ be a quiver obtained by any orientation of the graph shown. 
Order the vertices $(x, 1, 2)$ and fix the dimension vector $\alpha = (2, 3, 3)$ and stability parameter $\theta = (3, -1, -1)$. Then $\alpha$ decomposes in two different ways: \vspace{-.2cm}
\[
\alpha = (1, 3, 0) + (1, 0, 3) = (1, 2, 1) + (1, 1, 2). 
\]
\end{minipage}
\vspace*{-.2cm}

Let $\Phi = \{e_x + 3e_1, e_x + 2e_1+e_2, e_x+e_1+2e_2, e_x+3e_2\}$. Consider the function 
\[
s: \Phi \to \{ +, - \} \hspace{1cm} s(e_x + 3e_1) = + = s(e_x+3e_2), \ \ s(e_x + 2e_1+e_2) = - = s(e_x+e_1+2e_2).
\]
This defines an open subset $U_s$ of representations of $\overline{Q}$ in $\mu^{-1}(0)$ such that, for $s(\beta)=+$, there is no subrepresentation of dimension $\beta$. The quotient  $U_s /\!/ G_{\alpha}$ is a proper, nonprojective crepant resolution of $\cM_{\theta}(Q, \alpha)$.

\subsubsection{Four Vertex Example}
\begin{minipage}{0.75\textwidth}
  \setlength{\parindent}{15pt}
Let $Q$ be a quiver obtained by any orientation of the graph shown. Order the vertices $(x, 1, 2, y)$. Fix the dimension vector $\alpha = (1, 2, 2, 1)$ and stability parameter $\theta = (2, -1, -1, 2)$. 

\indent Let $\Phi := \{ e_x + 2e_1, e_x + e_1 + e_2, e_x + 2 e_2 \}$ and define
\[
s: \Phi \to \{ \pm \} \hspace{1cm} s(e_x + 2e_1) = + = s(e_x + 2e_2), \ \ s(e_x + e_1 + e_2) = -.
\]
$U_s /\!/ G_{\alpha}$ defines a nonprojective crepant resolution of $\cM_{\theta}(Q, \alpha)$.
\end{minipage} \hspace{0.2cm}
\begin{minipage}{0.2\textwidth}
\[
\begin{tikzpicture}
   % Same nodes repeated -- translated horizontally by 5 units, now unframed 
    % Nodes
    \node[circle, fill=blue!20, draw = blue, thick, inner sep = 0.5ex] (central) at (0, -1.5) {$x$};
    \node[circle, fill=black!20, draw = black, thick, inner sep = 0.5ex] (v1) at (-1.5, 0) {1};
    \node[circle, fill=black!20, draw = black, thick, inner sep = 0.5ex] (v2) at (1.5, 0) {2};
    \node[circle, fill=red!20, draw = red, thick, inner sep = 0.5ex] (infinity) at (0, 1.5) {$y$};

    % Straight edges
    \draw [-, shorten >=1mm, shorten <=1mm, bend right = 15] (central) to (v1);
    \draw [-, shorten >=1mm, shorten <=1mm, bend left = 15] (central) to (v2);
    \draw [-, shorten >=1mm, shorten <=1mm, bend right = 15] (infinity) to (v1);
    \draw [-, shorten >=1mm, shorten <=1mm, bend left = 15] (infinity) to (v2);

    % Shifted duplicate edges using bends
    \draw [-, shorten >=1mm, shorten <=1mm, bend left=15] (central) to (v1);
    \draw [-, shorten >=1mm, shorten <=1mm, bend right=15] (central) to (v2);
    \draw [-, shorten >=1mm, shorten <=1mm, bend left=15] (infinity) to (v1);
    \draw [-, shorten >=1mm, shorten <=1mm, bend right=15] (infinity) to (v2);
\end{tikzpicture}
\]
\end{minipage}

    \subsection{General construction}\label{ss:stars}
    %We now generalize the previous constructions to quivers with an arbitrary number of vertices. %We will begin with quivers with dimensions equal to one and containing a certain bipartite graph, and then generalize further.
    %, for framed stars of $\geq 4$ points, of dimensions all one with nontrivial framings at the endpoints. As before we consider the corresponding unframed quivers, and following Remark \ref{r:add-arrows} we allow adding more arrows.\\
    
    Consider a quiver with vertices $Q_0=\{x,1, 2, \ldots,n,y\}$ for $n \geq 2$ and with arrow set $Q_1$. Take a  dimension vector $\alpha \in \bN^{Q_0}$ with $\alpha_x=\alpha_y=1$.
   Let us assume that  $\alpha \in \Sigma_0$ for $\Sigma_0$ the set of dimension vectors of simple representations of $\overline{Q}$ lying in $\mu^{-1}(0)$; this will be satisfied if for instance we have $\alpha_i$ arrows from each of $x$ and $y$ to $i$ for every $i$. As is well known, this assumption guarantees that $\cM_\theta(Q,\alpha) \to \cM_0(Q,\alpha)$ is birational for all $\theta \in \bZ^{Q_0}$ with $\theta \cdot \alpha = 0$, because all simple representations are $\theta$-stable, and the image of the locus of these representations forms an open dense subset of $\cM_\theta(Q,\alpha)$ which maps isomorphically to the open dense simple locus in $\cM_0(Q,\alpha)$.
   %\travis{Maybe cite something? I have this statement in my paper with Gwyn probably, for example...} 
    %$\supseteq \{ \phi^x_i, \phi^y_i \}_{i = 1, \ldots, n}$, including an arrow $\phi^x_i$ (resp. $\phi^y_i$) from $x$ (resp. $y$) to $i \in \{1,\ldots, n\}$\footnote{$Q_1$ may also contain additional or even multiple arrows.}. 
     %$=(1,\ldots,1)$. 
    
    Consider a stability condition $\theta \in \bZ^{Q_0}$ with $\theta_x,\theta_y > 0$ and $\theta_i < 0$ for all $1 \leq i \leq n$.
    %    $=(m,-1,-1,\ldots,-1,n-m)$. 
    Then, all of the non-open symplectic leaves are closed, and are described by a decomposition $\alpha=\beta + (\alpha-\beta)$ where $e_x \leq \beta \leq \alpha$ and $\beta \cdot \theta = 0$. Call the corresponding leaf $S_\beta$.
    %$I \subseteq \{1,\ldots,n\}$ satisfies $|I|=m$ and $\beta_I := e_x +\sum_{i \in I} e_i$. Then $|I^c|=n-m$   and $\gamma_{I^c}:= e_y + \sum_{j \in I^c} e_j$. %Consequently, there are ${n \choose m}$ closed symplectic leaves. 
    
    The local singularity at each closed symplectic leaf $S_\beta$ is described by a quiver with two vertices, 
    $m_\beta := -(\beta,\alpha-\beta) \geq 2$ arrows between them, and possibly some loops at the vertices. By our assumption that $\alpha \in \Sigma_0$, it follows that $m_\beta \geq 2$. If $m_\beta > 2$,  there are exactly two crepant resolutions of this local singularity, whereas if $m_\beta = 2$, then this singularity has a unique resolution as it is a du Val singularity.
    
%    There are two choices of projective crepant resolution of this singularity: each resolution is given by variation of stability parameter with $\theta = (1, -1)$ or $-\theta = (-1, 1)$. Each of these 
    %$2^{{n \choose m}}$ 
    Any choices of local resolutions as above can be assembled to a locally projective crepant resolution of $\cM_\theta(Q,\alpha)$,  constructible as a quotient $U_s/\!/G_\alpha$ for $s: \Phi \to \{+,-\}$, where now 
    \[
    \Phi =\{\beta \in R_+: e_x \leq \beta \leq \alpha, \ \beta \cdot \theta = 0, \ m_\beta > 2\}.
    %\{I \subseteq \{1,\ldots, n\}, |I|=m\}.
    \]
    Further, by construction, they are pairwise nonisomorphic.  
\begin{thm}
    If $s: \Phi \to \{+,-\}$ has the property that, for some $k \geq 2$ and some $\beta_1, \ldots, \beta_k, \gamma_1, \ldots, \gamma_k \in \Phi$, we have
    \[
    \sum_{i=1}^k \beta_i = \sum_{i=1}^k \gamma_i,\]
    with $s(\beta_i) = +, \ s(\gamma_i) = -$,
    then the (proper) crepant resolution $U_s/\!/G_\alpha \to \cM_0(Q,\alpha)$ is nonprojective.
\end{thm}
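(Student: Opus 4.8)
The plan is to follow verbatim the strategy of the proof of Theorem~\ref{thm:four-point}, replacing only the final linear-algebra contradiction with the obvious generalization. First I would invoke Theorem~\ref{thm:BCS}: since $\alpha \in \Sigma_0$ there is a simple representation in $\mu^{-1}(0)$ and some $\alpha_i = \alpha_x = 1$, so every projective crepant resolution of $\cM_0(Q,\alpha)$ is of the form $\cM_{\theta'}(Q,\alpha)$ for some $\theta' \in \bZ^{Q_0}$ with $\theta'\cdot\alpha = 0$. Moreover, as in the earlier proof, such a map factors through $\pi\colon\cM_\theta(Q,\alpha)\to\cM_0(Q,\alpha)$ precisely when $\theta$ lies in the closure of the GIT chamber of $\theta'$; in that case $\mu_\alpha^{-1}(0)^{\theta'-\mathrm{ss}} = U_{s_{\theta'}}$ where $s_{\theta'}(\beta) := \sgn(\theta'\cdot\beta)$ for $\beta\in\Phi$. (Here one uses that for $\beta\in\Phi$ the condition $\theta'\cdot\beta \neq 0$ is automatic for $\theta'$ in the open chamber, so the sign is well defined, and that the existence/nonexistence of a subrepresentation of dimension $\beta$ in a $\theta'$-semistable representation is governed by $\sgn(\theta'\cdot\beta)$ exactly as in Definition~\ref{d:Us}.) As before, since the $U_{s_{\theta'}}/\!/G_\alpha$ are pairwise nonisomorphic over $\cM_\theta(Q,\alpha)$ and $\pi$ is birational (guaranteed by $\alpha\in\Sigma_0$), they are pairwise nonisomorphic over $\cM_0(Q,\alpha)$ as well.

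Next I would reduce the claim to a purely numerical statement: it suffices to show that the given $s$ is not equal to $s_{\theta'}$ for \emph{any} stability parameter $\theta'$, for then $U_s/\!/G_\alpha$ is not isomorphic over $\cM_0(Q,\alpha)$ to any $\cM_{\theta'}(Q,\alpha)$, hence is not projective. So suppose for contradiction that $s = s_{\theta'}$ for some $\theta'$. By hypothesis $s(\beta_i) = +$ and $s(\gamma_i) = -$ for $i = 1,\dots,k$, i.e.\ $\theta'\cdot\beta_i > 0$ and $\theta'\cdot\gamma_i < 0$. Summing and using linearity of the dot product,
\[
\theta'\cdot\Bigl(\sum_{i=1}^k \beta_i\Bigr) = \sum_{i=1}^k \theta'\cdot\beta_i > 0, \qquad \theta'\cdot\Bigl(\sum_{i=1}^k \gamma_i\Bigr) = \sum_{i=1}^k \theta'\cdot\gamma_i < 0.
\]
But $\sum_{i=1}^k\beta_i = \sum_{i=1}^k\gamma_i$ by assumption, so the two left-hand sides are equal, contradicting that one is strictly positive and the other strictly negative. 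This proves $s \neq s_{\theta'}$ for every $\theta'$, and the theorem follows.

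The only genuinely nonformal points — and the ones I would want to state carefully rather than grind through — are the two structural facts imported from the four-pointed star discussion: (a) that $U_s/\!/G_\alpha$ really is a (proper, crepant) resolution, which is the running hypothesis of this subsection justified by the preceding paragraphs on $m_\beta$ and the local model in Subsection~\ref{ss:local} together with \cite[Cor.~3.39]{KS24}; and (b) the identification $\mu_\alpha^{-1}(0)^{\theta'-\mathrm{ss}} = U_{s_{\theta'}}$, which requires knowing that for $\theta'$ generic the non-open leaves are still the $S_\beta$ with $\beta\in\Phi$ and that $\theta'$-semistability of a representation at such a leaf is detected exactly by the sign of $\theta'\cdot\beta$. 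I expect (b) to be the main obstacle to a fully rigorous write-up, since it is where the combinatorics of Definition~\ref{d:Us} must be matched to King stability; but it is genuinely the same argument as in Theorem~\ref{thm:four-point}, so I would simply say "the argument of Theorem~\ref{thm:four-point} applies verbatim" after isolating the numerical contradiction above, which is the only place the hypothesis on $\beta_i,\gamma_i$ enters.
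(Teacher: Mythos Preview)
Your proposal is correct and matches the paper's own approach exactly: the paper simply writes ``The proof is essentially the same as for Theorem~\ref{thm:four-point}, and is omitted,'' and your write-up is precisely that argument with the final linear-algebra contradiction generalized from two summands to $k$. Your identification of the two nonformal inputs (a) and (b) is accurate and appropriately flagged as inherited from the earlier setup.
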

The proof is essentially the same as for Theorem \ref{thm:four-point}, and is omitted.\\
Moreover, we can see that any nonprojective resolutions  $U_s/\!/G_\alpha \to \cM_0(Q,\alpha)$ can be extended to enlargements of the quiver:
  \begin{cor}\label{c:extend-resolutions}
    Let $(Q, \alpha, \theta)$ be as above. % with $2 \leq m < n$ fixed. 
    Let $Q'$ be a quiver containing $Q$ and $\alpha'$ be a dimension vector with 
    $\alpha'_x=\alpha'_y=1$ and $\alpha'_i \geq \alpha_i$ for all $1 \leq i \leq n$. Assume that $\alpha' \in \Sigma_0$ for $Q'$. %$\alpha_x = 1 = \alpha_y$. 
    Let $\theta' \in \bZ^{Q_0'}$ be a stability parameter with $\theta'_x=\theta_x$,  $\theta_i=\theta'_i$ for $1 \leq i \leq n$, and with all values $\theta'_i$ negative except for $\theta'_x,\theta'_y$.
    %\mid_{Q_0 \setminus\{y\}} = \theta|_{Q_0 \setminus \{$ extending $\theta$ with $\theta_v < 0$ for any $v \in Q'_{0} \backslash Q_0$. 
    Then $\Phi' \supseteq \Phi$. If $s: \Phi \to \{+,-\}$ is such that $U_s/\!/G_\alpha \to \cM_0(Q,\alpha)$ is nonprojective, then for any extension of $s$ to $s': \Phi' \to \{ +, - \}$, 
    %if the corresponding resolution $\pi_U: U /\!/ s \to \cM_{\theta}(Q, \alpha)$ is nonprojective then so is 
    $U'_{s'}/\!/G_{\alpha'} \to \cM_{0}(Q', \alpha')$ is also nonprojective.
\end{cor}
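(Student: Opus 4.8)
The plan is to reduce the nonprojectivity of $U'_{s'}/\!/G_{\alpha'} \to \cM_0(Q',\alpha')$ to that of $U_s/\!/G_\alpha \to \cM_0(Q,\alpha)$, using Theorem \ref{thm:BCS} in both settings. First I would check that $\Phi' \supseteq \Phi$: given $\beta \in \Phi$, extend it by zero at the new vertices of $Q'_0$ to get $\beta' \in \bN^{Q'_0}$. Since $\theta'$ agrees with $\theta$ on $\{x,1,\dots,n\}$ and $\beta'$ is supported there, we have $\beta' \cdot \theta' = \beta \cdot \theta = 0$, and clearly $e_x \leq \beta' \leq \alpha'$. It remains to verify $m_{\beta'} > 2$ for $Q'$; since adding arrows only increases $-(\beta',\alpha'-\beta')$ (each new arrow between the supports of $\beta'$ and $\alpha'-\beta'$ adds to the count, and none decrease it), and increasing $\alpha'_i$ over $\alpha_i$ can only add arrows into vertices of the opposite support, we get $m_{\beta'} \geq m_\beta > 2$. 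So $\Phi \hookrightarrow \Phi'$ by extension-by-zero, and the restriction of $s'$ to this copy of $\Phi$ equals $s$.

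Next I would invoke Theorem \ref{thm:BCS}: since $\alpha' \in \Sigma_0$ for $Q'$, every projective crepant resolution of $\cM_0(Q',\alpha')$ is of the form $\cM_{\theta''}(Q',\alpha')$ for some $\theta'' \in \bZ^{Q'_0}$ with $\theta'' \cdot \alpha' = 0$. As in the proof of Theorem \ref{thm:four-point}, the semistable locus is $U'_{s_{\theta''}}$ where $s_{\theta''}(\gamma) = \sgn(\theta'' \cdot \gamma)$ for $\gamma \in \Phi'$, and $U'_{s'}/\!/G_{\alpha'} \cong \cM_{\theta''}(Q',\alpha')$ over $\cM_0(Q',\alpha')$ forces $s' = s_{\theta''}$ (here one uses that the $U'_{s'}/\!/G_{\alpha'}$ are pairwise nonisomorphic over the cone, as established in the general construction). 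So it suffices to show: there is no $\theta'' \in \bZ^{Q'_0}$ with $\sgn(\theta'' \cdot \gamma) = s'(\gamma)$ for all $\gamma \in \Phi'$.

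To derive a contradiction, suppose such a $\theta''$ exists. Restrict attention to $\Phi \subseteq \Phi'$ (the extensions-by-zero). Then $\theta'' \cdot \beta' = (\theta''|_{\{x,1,\dots,n\}}) \cdot \beta$ for all $\beta \in \Phi$, where $\theta''|_{\{x,1,\dots,n\}}$ denotes the restriction of $\theta''$ to the coordinates coming from $Q_0 \setminus \{y\}$. Note $y \notin \mathrm{supp}(\beta)$ for any $\beta \in \Phi$ (since $\beta \leq \alpha$ with $\beta_y$ forced to $0$ by $\beta \cdot \theta = 0$, $\theta_y > 0$, $e_x \leq \beta$, exactly as in the general setup). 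Set $\eta := (\theta''_v)_{v \in Q_0}$, i.e., restrict $\theta''$ to the original vertex set $Q_0$; then $\eta \cdot \beta = \theta'' \cdot \beta'$ for $\beta \in \Phi$, and one checks $\eta \cdot \alpha = 0$ using that the $y$-coordinate contribution is handled by the same argument that shows $\beta_y$ and $(\alpha - \beta)$-supports behave as in the unprimed case — more carefully, one may need to first adjust $\theta''$ by a multiple of $\alpha'$ so that $\theta''_y$ is whatever is convenient, since only the signs of $\theta'' \cdot \gamma$ matter and $\gamma \cdot \alpha' = 0$ is \emph{not} assumed for $\gamma \in \Phi'$; here instead I use $\gamma \cdot \alpha' = 0$ need not hold, so I argue purely with $\eta := (\theta''_v)_{v \in Q_0}$ and $\eta \cdot \beta = \theta'' \cdot \beta'$, without needing $\eta \cdot \alpha = 0$ at all. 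Then $\eta$ is a vector with $\sgn(\eta \cdot \beta) = s(\beta)$ for all $\beta \in \Phi$, and one can rescale or project $\eta$ onto $\alpha^\perp$ without changing these signs (subtracting $\tfrac{\eta \cdot \alpha}{|\alpha|^2}\alpha$ over $\bQ$ then clearing denominators, noting $\beta \cdot \alpha$ may be nonzero — so instead one simply observes that for the linear-algebra obstruction in Theorem \ref{thm:four-point}-style arguments, the relevant identity $\sum \beta_i = \sum \gamma_i$ with $\beta_i, \gamma_i \in \Phi$ gives $\sum \eta \cdot \beta_i = \sum \eta \cdot \gamma_i$ with the left side positive and right side negative, a contradiction, \emph{without any orthogonality hypothesis on $\eta$}). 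Since by hypothesis $s = s'|_\Phi$ is the restriction of a nonprojective $U_s/\!/G_\alpha$, the relation witnessing its nonprojectivity — namely some $\sum_{i=1}^k \beta_i = \sum_{i=1}^k \gamma_i$ in $\Phi$ with $s(\beta_i) = +$, $s(\gamma_i) = -$ — produces exactly this contradiction. Hence no such $\theta''$ exists, and $U'_{s'}/\!/G_{\alpha'} \to \cM_0(Q',\alpha')$ is nonprojective.

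The main obstacle I anticipate is the bookkeeping around stability parameters: the general-construction $\theta''$ for $Q'$ need not satisfy $\theta'' \cdot \alpha' = 0$ literally in the form stated, and one must be careful that the obstruction used for nonprojectivity (a linear identity among elements of $\Phi$) is genuinely insensitive to both the extra vertices and any normalization, which is why the cleanest route is to phrase the obstruction as a sign-incompatibility of an affine-linear functional rather than invoking orthogonality. A secondary point requiring care is confirming $m_{\beta'} > 2$ for all $\beta \in \Phi$ after enlarging $Q$ and $\alpha$, i.e., that passing to $(Q',\alpha')$ never destroys a singularity that contributed to $\Phi$; this follows from monotonicity of the arrow count $-(\beta',\alpha'-\beta')$ under adding arrows and increasing dimensions, but should be stated explicitly. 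Everything else is a verbatim repetition of the argument in Theorem \ref{thm:four-point} and the general construction.
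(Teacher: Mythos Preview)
Your argument tracks the paper's closely up to the point where you produce a vector $\eta \in \bZ^{Q_0}$ with $\sgn(\eta \cdot \beta) = s(\beta)$ for all $\beta \in \Phi$. At that stage the paper simply adjusts $\eta_y$ so that $\eta \cdot \alpha = 0$ (legitimate, since every $\beta \in \Phi$ has $\beta_y = 0$), obtaining a genuine stability parameter $\chi$ for $(Q,\alpha)$ with $s_\chi = s$, hence $U_s = \mu_\alpha^{-1}(0)^{\chi\text{-ss}}$ and $U_s/\!/G_\alpha \cong \cM_\chi(Q,\alpha)$ is projective --- contradicting the hypothesis. That is the whole contrapositive.

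You instead try to close the argument by invoking ``the relation witnessing its nonprojectivity --- namely some $\sum_{i=1}^k \beta_i = \sum_{i=1}^k \gamma_i$ with $s(\beta_i)=+$, $s(\gamma_i)=-$''. This is a genuine gap: the preceding theorem only shows that such a relation \emph{implies} nonprojectivity, not the converse. The hypothesis of the corollary is merely that $U_s/\!/G_\alpha$ is nonprojective, with no assumption that a linear relation in $\Phi$ exists. (The converse can in fact be proved via Gordan's theorem --- no $\chi$ satisfies the sign system iff there is a nontrivial nonnegative dependence $\sum a_\beta \beta = \sum b_\gamma \gamma$, and comparing $e_x$-coordinates forces $\sum a_\beta = \sum b_\gamma$ --- but you neither state nor prove this, and the paper's route makes it unnecessary.) The fix is simply to stop at $\eta$, normalize $\eta_y$, and conclude projectivity of $U_s/\!/G_\alpha$ directly, exactly as the paper does.

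A minor secondary point: your verification that $m_{\beta'} \geq m_\beta$ is incomplete. You argue that adding arrows and increasing dimensions can only increase the arrow-count contribution to $-(\beta',\alpha'-\beta')$, but you overlook that the vertex term $-2\sum_v \beta_v(\alpha'_v - \beta_v)$ moves in the opposite direction when $\alpha'_v > \alpha_v$. The paper sidesteps this by simply asserting that a root for $Q$ remains a root for $Q'$ and that $\Phi' \supseteq \Phi$; if you want to make the $m_{\beta'} > 2$ check explicit, you need to account for both contributions.
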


\begin{proof}
    Note that a root for $Q$ is still a root for $Q'$. So it follows immediately that $\Phi' \supseteq \Phi$, if we think of elements of $\bN^{Q}$ as elements of $\bN^{Q'}$ by extension by zero to vertices in $Q'_0\setminus Q_0$.  Suppose that $s': \Phi' \to \{+,-\}$ extends $s$. 
    
    We again use that any projective resolution of $\cM_0(Q',\alpha')$ is given by variation of GIT quotient \cite{BCS23}, and the ones which factor through $\cM_{\theta'}(Q',\alpha')$ are given as $\cM_{\chi'}(Q',\alpha') \to \cM_{\theta'}(Q',\alpha')$ for $\chi'$ in a GIT chamber whose boundary contains $\theta'$. Then we can construct $t': \Phi' \to \{+,-\}$ such that $U_{t'} = \mu_{\alpha'}^{-1}(0)^{\chi'\text{-ss}}$.  

    Thus, if $U_{s'}/\!/G_{\alpha'} \to \cM_0(Q',\alpha')$ is projective, we have $U_{s'}/\!/G_{\alpha'} \cong U_{t'}/\!/G_{\alpha'}$ for some $t'$ with $U_{t'} = \mu_{\alpha'}^{-1}(0)^{\chi'\text{-ss}}$.  However, all of the different resolutions $U_{s'}/\!/G_\alpha$ for varying $s'$ are nonisomorphic over $\cM_{\theta'}(Q',\alpha')$, hence over $\cM_0(Q',\alpha')$.  So $s'=t'$, and $U_{s'}=\mu_{\alpha'}^{-1}(0)^{\chi'\text{-ss}}$.

    %    If $U_{s'}/\!/G_{\alpha'} \to \cM_0(Q',\alpha')$ is projective, then $U_{s'}/\!/G_{\alpha'} \cong \mu_{\alpha'}^{-1}(0)^{\chi'}/\!/G_\alpha$ for some $\chi' \in \bZ^{Q_0'}$ with $\chi' \cdot \theta'=0$.  Now we can construct $s_{\chi'}$ such that $U_{s_{\chi'}} = \mu^{-1}(0)^{\chi'}$.
    
    But then if we let $\chi \in \bZ^{Q_0}$ be the stability condition with $\chi_x=\chi'_x$, $\chi_i=\chi'_i$ for $1 \leq i \leq n$, and $\chi \cdot \theta=0$, then we obtain that $U_s=\mu_{\alpha}^{-1}(0)^\chi$. So $U_s/\!/G_\alpha \to \cM_0(Q,\alpha)$ is also projective. 
\end{proof}

%\begin{rem} \label{r:different}
%    Note that different choices of $s'$ give nonisomorphic resolutions $U_{s'}/\!/G_{\alpha'}$ (over $\cM_{\theta'}(Q',\alpha')$, hence also over $\cM_0(Q',\alpha')$). Consequently, nonisomorphic resolutions of $\cM_0(Q, \alpha)$ extension to nonisomorphic resolutions of $\cM_{\theta}(Q', \alpha')$. And the resolution of $\cM_0(Q, \alpha)$ corresponding to the function $s$ can be extended non-uniquely, say to $s'_1$ and $s'_2$ with $s'_1 \neq s'_2$. The resulting resolutions of $\cM_{\theta}(Q', \alpha')$ are nonisomorphic. 
%\end{rem}

    \begin{exam} \label{exam:nonhyptertoric}
        Let $Q, \alpha, \theta$ be as in Section \ref{ss:four-pointed}, and let $Q'$ be the quiver with $Q'_0=Q_0$ and with two arrows from each of $x$ and $y$ to $1,2,3$, and $4$. Let the dimension vector be $\alpha'=(1,2,2,2,2,1)$. To apply Corollary \ref{c:extend-resolutions}, set  $\theta'=(2,-1,-1,-1,-1,6)$. The minimal strata now consist of decompositions of the form $\alpha'=\beta+(\alpha'-\beta)$ with $\beta = e_x + e_i + e_j$ for $1 \leq i \leq j$. There are now ten (not six) minimal strata. As explained in Section \ref{ss:local}, each singularity has two local choices of projective crepant resolution. The 18 choices of crepant resolutions from before along the six strata corresponding to $i \neq j$ still are incompatible with all of the global projective crepant resolutions (for any choice of crepant resolutions along the remaining four strata).
    \end{exam}

%\begin{rem}In Corollary \ref{c:extend-resolutions}, it is also possible to allow some $\theta'_i$ to be zero; in this case the resolutions are no longer described simply by functions $s'$, but it is still true that one can obtain, from a nonprojective resolution $U_s/\!/G_\alpha \to \cM_0(Q,\alpha)$, nonprojective resolutions of $\cM_0(Q',\alpha')$ in essentially the same way.
%\end{rem}
    Now, let us restrict to the case where $\alpha_i=1$ and $\theta_i=-1$ for all $1 \leq i \leq n$, and $Q_1$ contains at least one arrow from $x$ to each of $1,\ldots,n$. Assume also that $n \geq 3$ and $m:=\theta_x \in \{1,\ldots, n-1\}$. In this case $\Phi=\{\beta_I: I \subseteq \{1,\ldots,n\} \mid |I|=m\}$, with $\beta_I := e_x + \sum_{i \in I} e_i$. As a result $|\Phi| = {n \choose m}$ and there are precisely $2^{{n \choose m}}$ locally projective crepant resolutions of $\cM_\theta(Q,\alpha)$. We obtain:
    \begin{cor} \label{c:general_nonprojective} 
    Under the preceding assumptions, if $s: \Phi \to \{+,-\}$ has the property that, for some $k \geq 2$ and some subsets $I_1,\ldots, I_k$ and $J_1,\ldots,J_k$ of $\{1,\ldots,n\}$ of size $m$ such that
    \[\sum_{i=1}^k \beta_{I_i} = \sum_{i=1}^k \beta_{J_i}, \]
    i.e., the unions with multiplicity of the $I_i$ and $J_i$ are the same, we have
    \[
    s(\beta_{I_i}) = +, \quad s(\beta_{J_i})= -, \quad \forall i=1,\ldots, k,
    \]
    then the (proper) crepant resolution $U_s/\!/G_\alpha \to \cM_0(Q,\alpha)$ is nonprojective.
    \end{cor}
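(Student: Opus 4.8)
The plan is to deduce Corollary~\ref{c:general_nonprojective} from the general Theorem stated earlier in Subsection~\ref{ss:stars}, once one has checked that the restricted data $(Q,\alpha,\theta)$ fits that framework and that $\Phi$ is as described. So the first step is to confirm the hypotheses of Subsection~\ref{ss:stars}: we are given $\alpha_x=\alpha_y=1$ and $\theta_i=-1<0$ for $1\le i\le n$, and from $\theta\cdot\alpha=0$ with $\theta_x=m$ we get $\theta_y=n-m$, which is positive exactly because $m\le n-1$; the assumption $\alpha\in\Sigma_0$ is part of the standing hypotheses (and is guaranteed, e.g., by having at least one arrow from each of $x$ and $y$ to every $i$). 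Hence $(Q,\alpha,\theta)$ is an instance of the general construction, so that for every $s\colon\Phi\to\{+,-\}$ the map $U_s/\!/G_\alpha\to\cM_\theta(Q,\alpha)$, and therefore $U_s/\!/G_\alpha\to\cM_0(Q,\alpha)$, is a proper crepant resolution, and these resolutions are pairwise non-isomorphic over $\cM_\theta(Q,\alpha)$, hence over $\cM_0(Q,\alpha)$.

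Next I would verify the claimed description $\Phi=\{\beta_I:\ I\subseteq\{1,\dots,n\},\ |I|=m\}$. Since $\alpha_i=1$, any root $\beta$ with $e_x\le\beta\le\alpha$ has $\beta_i\in\{0,1\}$; putting $I=\{i:\beta_i=1\}$, the condition $\beta\cdot\theta=0$ reads $m\,\beta_x-|I|+(n-m)\beta_y=0$, which forces either $\beta_y=0$ and $|I|=m$, or $\beta_y=1$ and $\beta=\alpha$, the trivial decomposition, which is excluded. For $\beta=\beta_I$ one must then check $m_{\beta_I}=-(\beta_I,\alpha-\beta_I)>2$, so that the local singularity at the leaf $S_{\beta_I}$ genuinely has two crepant resolutions indexed by a sign; since $\beta_I$ and $\alpha-\beta_I$ have complementary (hence disjoint) supports among the vertices, $m_{\beta_I}$ is simply the number of arrows of $Q$ running between $\{x\}\cup I$ and $\{y\}\cup I^c$, which exceeds $2$ once $n\ge3$ and there is an arrow from each of $x$ and $y$ to every $i$. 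Granting this, $|\Phi|=\binom{n}{m}$ and the $2^{\binom{n}{m}}$ locally projective crepant resolutions are exactly the $U_s/\!/G_\alpha$.

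Finally I would invoke the general Theorem of Subsection~\ref{ss:stars} with $\beta_i:=\beta_{I_i}$ and $\gamma_i:=\beta_{J_i}$. Because $\beta_{I_i}=e_x+\sum_{j\in I_i}e_j$, the identity $\sum_{i=1}^k\beta_{I_i}=\sum_{i=1}^k\beta_{J_i}$ amounts (after cancelling the equal $k\,e_x$-terms) to $\biguplus_i I_i=\biguplus_i J_i$ as multisets, which is exactly the hypothesis of the corollary; and $s(\beta_{I_i})=+$, $s(\beta_{J_i})=-$ are precisely the sign conditions required by that Theorem. As in the proof of Theorem~\ref{thm:four-point}, this is incompatible with $U_s/\!/G_\alpha$ being of the form $\cM_{\theta'}(Q,\alpha)$, since no $\theta'$ can satisfy $\theta'\cdot\beta_{I_i}>0$ and $\theta'\cdot\beta_{J_i}<0$ for all $i$ while the two sums $\sum_i\theta'\cdot\beta_{I_i}$ and $\sum_i\theta'\cdot\beta_{J_i}$ coincide; so by Theorem~\ref{thm:BCS} the proper crepant resolution $U_s/\!/G_\alpha\to\cM_0(Q,\alpha)$ is nonprojective.

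The argument is thus essentially bookkeeping: the only step requiring real care is pinning down $\Phi$ — i.e., checking that each decomposition $\alpha=\beta_I+(\alpha-\beta_I)$ with $|I|=m$ has more than two arrows in its local quiver, so that it is not a type-$A_1$ du Val singularity (which would admit a unique, automatically projective, crepant resolution and so not be indexed by a sign). Once that is done, the nonprojectivity is a formal consequence of the general Theorem together with Theorem~\ref{thm:BCS}.
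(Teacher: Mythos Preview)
Your proposal is correct and follows essentially the same approach as the paper: the corollary is presented there as an immediate specialization of the general Theorem in Subsection~\ref{ss:stars} (whose proof in turn is ``essentially the same as for Theorem~\ref{thm:four-point}, and is omitted''), once the description $\Phi=\{\beta_I:|I|=m\}$ has been recorded in the paragraph preceding the statement. Your write-up is simply more explicit about verifying the standing hypotheses and the condition $m_{\beta_I}>2$, which the paper absorbs into the setup.
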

    
We next randomly generate a function $s: \Phi \to \{ +, - \}$, and consider the associated resolution of the form $U_s/\!/G_\alpha \to \cM_0(Q,\alpha)$. 
    
    \begin{cor} \label{cor:probability}
       Continuing with the assumptions of the preceding corollary, a randomly generated resolution of the form $U_s/\!/G_\alpha \to \cM_0(Q,\alpha)$ is nonprojective with probability approaching $1$ as $n$ and $n-m$ tend to infinity.
    \end{cor}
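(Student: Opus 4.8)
The plan is to exhibit, for the relevant range of $(n,m)$, an unboundedly growing family of ``swap syzygies'' among the $\beta_I\in\Phi$ whose supports in $\Phi$ are pairwise disjoint; feeding each into Corollary~\ref{c:general_nonprojective} gives a collection of \emph{independent} events forcing nonprojectivity, each of fixed positive probability, and a Borel--Cantelli-type estimate then gives the claim. Here a randomly generated resolution of the form $U_s/\!/G_\alpha$ means one for which $s$ is drawn uniformly from $\{+,-\}^\Phi$, i.e.\ the $\binom{n}{m}$ values $s(\beta_I)$ are i.i.d.\ fair coin flips. One caveat should be recorded up front: the statement is only meaningful for $2\le m\le n-2$, since when $m\in\{1,n-1\}$ every relation $\sum_i\beta_{I_i}=\sum_i\beta_{J_i}$ among the $\beta_I$ forces the multisets $\{I_i\}$ and $\{J_i\}$ to coincide, so Corollary~\ref{c:general_nonprojective} produces nothing (and in fact every one of the $2^{\binom{n}{m}}$ resolutions is then of the form $\cM_{\theta'}(Q,\alpha)$, hence projective). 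So we read the hypothesis ``$n$ and $n-m$ tend to infinity'' as ``$2\le m\le n-2$ and $n-m\to\infty$'' (which forces $n\to\infty$).

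\emph{Step 1: a basic swap syzygy.} Fix $C\subseteq\{1,\dots,n\}$ with $|C|=m-2$ and four distinct elements $a_1,a_2,b_1,b_2\in\{1,\dots,n\}\setminus C$, and set
\[
I_1=C\cup\{a_1,a_2\},\qquad I_2=C\cup\{b_1,b_2\},\qquad J_1=C\cup\{a_1,b_2\},\qquad J_2=C\cup\{b_1,a_2\}.
\]
These are four distinct $m$-element subsets, and writing $\mathbf 1_S$ for the indicator vector of $S$, one checks $\mathbf 1_{I_1}+\mathbf 1_{I_2}=2\,\mathbf 1_C+\mathbf 1_{\{a_1,a_2,b_1,b_2\}}=\mathbf 1_{J_1}+\mathbf 1_{J_2}$, hence $\beta_{I_1}+\beta_{I_2}=\beta_{J_1}+\beta_{J_2}$. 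Thus, whenever $s(\beta_{I_1})=s(\beta_{I_2})=+$ and $s(\beta_{J_1})=s(\beta_{J_2})=-$, Corollary~\ref{c:general_nonprojective} with $k=2$ shows $U_s/\!/G_\alpha\to\cM_0(Q,\alpha)$ is nonprojective. (This construction only requires $m\ge2$ and $n-m\ge2$.)

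\emph{Steps 2--3: many disjoint syzygies and conclusion.} Keep $C$ fixed and pick pairwise disjoint $4$-element subsets $G_1,\dots,G_N\subseteq\{1,\dots,n\}\setminus C$, which is possible with $N=\lfloor(n-m+2)/4\rfloor$ since $|\{1,\dots,n\}\setminus C|=n-m+2$. Each $G_t$, with any labeling $G_t=\{a_1^t,a_2^t,b_1^t,b_2^t\}$, yields a quadruple $(I_1^t,I_2^t,J_1^t,J_2^t)$ as in Step~1; every subset appearing in the $t$-th quadruple is of the form $C\cup(\text{a two-element subset of }G_t)$, so since the $G_t$ are pairwise disjoint, the $4$-tuples of subsets used by distinct quadruples are disjoint subsets of $\Phi$. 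Therefore the events $E_t:=\{s(\beta_{I_1^t})=s(\beta_{I_2^t})=+,\ s(\beta_{J_1^t})=s(\beta_{J_2^t})=-\}$, $t=1,\dots,N$, are mutually independent, each of probability $2^{-4}$. Since the occurrence of any $E_t$ forces nonprojectivity,
\[
\bP\bigl(U_s/\!/G_\alpha\to\cM_0(Q,\alpha)\text{ is projective}\bigr)\ \le\ \bP\Bigl(\bigcap_{t=1}^{N}\overline{E_t}\Bigr)\ =\ \bigl(\tfrac{15}{16}\bigr)^{N},
\]
and $N=\lfloor(n-m+2)/4\rfloor\to\infty$ as $n-m\to\infty$, so the right-hand side tends to $0$, proving the corollary.

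I do not expect a genuine obstacle here: the only delicate points are the degenerate cases $m\in\{1,n-1\}$ (excluded above, where the assertion is actually false and must be interpreted away) and the verification that the four subsets in each quadruple are distinct and sign-consistent (routine, done in Step~1). If one wanted an explicit decay rate or a sharper count, one could instead run a second-moment (Chebyshev) or Janson-type argument over \emph{all} swap syzygies rather than a maximal disjoint family; the only extra ingredient would be a routine bound on the covariances coming from pairs of syzygies that share a common $\beta_I$.
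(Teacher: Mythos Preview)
Your argument is correct and takes a genuinely different route from the paper. The paper fixes the window $\{1,\ldots,2m\}$ and considers the $M=\binom{2m}{m}/2$ complementary pairs $(I,I^c)$ therein; since distinct pairs involve disjoint sets of $\beta_I$'s, the events ``not both $+$'' are independent of probability $3/4$ each, and a union bound with the analogous ``not both $-$'' events gives $\bP(\text{projective})\le 2(3/4)^M$. You instead fix an $(m-2)$-element core $C$ and vary pairwise disjoint $4$-element tails $G_t\subseteq\{1,\ldots,n\}\setminus C$, producing $N=\lfloor(n-m+2)/4\rfloor$ genuinely independent swap syzygies and the bound $(15/16)^N$.

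The practical difference is the asymptotic parameter: the paper's bound tends to $0$ only as $m\to\infty$, whereas yours does so as $n-m\to\infty$. Since the stated hypothesis is that $n$ and $n-m$ (not $m$) tend to infinity, your argument actually matches the statement more faithfully; in particular it covers the regime $m=2$ fixed, $n\to\infty$, where the paper's bound $2(3/4)^3$ is a fixed constant. Your explicit exclusion of the degenerate cases $m\in\{1,n-1\}$ is also a useful clarification. The trade-off is that when both regimes apply, the paper's bound decays far faster (super-exponentially in $m$ via $\binom{2m}{m}$) than your linear-in-$(n-m)$ exponent.
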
 

    \begin{proof}
        %By symmetry, we can assume $n\geq m$. 
        We will analyze the condition of Corollary \ref{c:general_nonprojective} for $k=2$ and partitions $I_1 \sqcup I_2 = \{ 1, \ldots, 2m \} \subset \{1, \ldots, n \}$ with $|I_1| = m = |I_2|$. There are $M :={2m \choose m}/2$ such partitions. Fixing such a partition $I_1, I_2$, the probability that $s$ does not satisfy $s(\beta_{I_1})=+=s(\beta_{I_2})$ is $3/4$. So the probability that $s$ does not satisfy the condition on all partitions is $(3/4)^M$. Similarly, the probability that $s$ does not satisfy $s(\beta_{J_1})=-=s(\beta_{J_2})$ for some choice of partition $J_1, J_2$ is $(3/4)^M$. Therefore, with probability approaching $1$, the hypotheses of the theorem will be satisfied.
    \end{proof}    
    Based on the evidence given, we expect that, for sufficiently large quivers, the vast majority of crepant resolutions are nonprojective (as shown for the hyperpolygon case in \cite{Hubbard}). 
    %\dan{For large cycles, I suppose this isn't the case. It's unclear what the best statement is here. For large 'random quivers' this should be true.} 
    %do we need thus subsection? maybe think if there is anything to say extending to dimensions greater than one

\section{Good quotient constructions}\label{s:open-quotient}

In this section, as in \cite{AP16,Hubbard}, we construct resolutions of singularities by more general quotients than the usual ones provided by GIT. Given a singularity $V/\!/G$, we consider certain $G$-stable open subsets $U \subseteq V$, which yields the map $\pi_U: U/\!/G \to V/\!/G$.  This construction subsumes the examples in the previous section. In those cases $\pi_U$ was proper, being a composition of locally projective (hence proper) morphisms. Here, the main challenge is determining for which $G$-stable open subsets $U$ we have that $\pi_U$ is proper.  We will prove (sufficient) criteria on $U$ for $\pi_U$ to be proper. This construction yields (proper) crepant resolutions which cannot be obtained by iteratively taking locally projective resolutions.

\subsection{Some $G$-stable open subsets of representations}

Let us explicitly describe the set $U_s$ from the previous section for $Q_0=\{x,1,2,\ldots,n,y\}$, with  $Q_1$ any set of arrows including at least one of the form $x \to i$ and $y \to j$ for every $i,j \in \{1,\ldots,n\}$. Let dimension vector $\alpha=(1,\ldots,1)$ and let $\theta=(m,-1,\ldots,-1,n)$ for $2 \leq m \leq n-2$. We consider locally projective resolutions of $\cM_\theta(Q,\alpha)$, which up to isomorphism are given by maps $s: \Phi \to \{+,-\}$ with $\Phi=\{e_x+e_{i_1}+\cdots+e_{i_m}: 1 \leq i_1 < \cdots < i_m \leq n\}$. This defines a $G$-stable open subset $U_s \subset \mu_\alpha^{-1}(0)^{\theta-ss}$. \\

For concreteness, consider the case $n=4$, $m=2$. Here
\[
U_s \subset \mu_{\alpha}^{-1}(0)^{\theta-\text{ss}} \subset \Rep_{\alpha}(\overline{Q}) = \bA^{16}.
\]
Fix a representation $\rho =(V_i, \ T_a: i \in Q_0, \ a \in \overline{Q}_1) \in \Rep_{\alpha}(\overline{Q})$. Then King's stability criterion says $\rho$ is $(2,-1,-1,-1,-1,2)$-semistable if it has no subrepresentation $\rho'$ with $\dim(\rho') \cdot \theta >0$. Therefore every subrepresentation containing $V_x$ or $V_y$ must also contain $V_i, V_j$ for some distinct $i,j \in \{1,2,3,4\}$, and any subrepresentation containing both  $V_x$ and $V_y$ must be the entire representation, $\rho$.  

Next, given $s: \Phi \to \{+,-\}$, we additionally require that if $s(\beta)=+$ then some arrow defines a nonzero linear map from $V_{\beta}$ to $V_{\alpha-\beta}$. Explicitly this means that, for some $i, j \in Q_0$ in the support of $\beta, \alpha-\beta$, respectively, there is an arrow $a:i \to j$ in $\overline{Q}_1$ for which $T_a \neq 0$. By our previous assumptions, for $\beta=e_x+e_i+e_j$ and $\alpha-\beta=e_y+e_k+e_\ell$, it is equivalent to ask that, if $s(\beta)=+$, there is a nonzero \emph{path} from $x$ to $k$ or $\ell$, and if $s(\beta)=-$, there is a nonzero \emph{path} from $y$ to $i$ or $j$ (the path can be a composition of arrows).

We can restate all of this simply as follows:  given a quiver $Q$ with dimension vector $\alpha=(1,\ldots,1)$, the open set $U_s$ is the union of the ${4 \choose 2}$ loci
\[
U_s = \bigcup_{\substack{I \subseteq  \{ 1, 2, 3, 4\}, \\ |I| =2 }} U_{s, I},
\]
where $U_{s, I} \subseteq \mu_\alpha^{-1}(0)$ is the subset of representations with:
\begin{itemize}
\item nonzero paths from $x$ to every vertex of $I$, 
\item nonzero paths from $y$ to every vertex of $I^c$, and 
\item $\begin{cases} \text{a nonzero path from }x\text{ to some vertex of }I^c & \text{ if }s(\beta_I)=+, \\ 
 \text{a nonzero path from }y\text{ to some vertex of }I & \text{ if } s(\beta_I)=-.
    \end{cases}$ \\
\end{itemize}

Similarly, for general $m,n \geq 2$, we obtain a description of $U_s$ as a union
\[
U_s = \bigcup_{\substack{I \subseteq \{1,\ldots,n\},  \\ |I| = m}} U_{s, I}
\]
where $U_{s, I}$ is defined as above. 
%For every subset $I \subseteq \{1,\ldots,m+n\}$ with $|I|=m$, consider the subset $U_{s, I}$ of representations with:
%\begin{itemize}
%\item nonzero paths from $x$ to every vertex of $I$,
%\item nonzero paths from $y$ to every vertex of $I^c \subseteq \{1, \ldots , m+n\}$, and
%\item $\begin{cases} \text{a nonzero path from }x\text{ to some vertex of }I^c & \text{ if }s(\beta_I)=+, \\ 
% \text{a nonzero path from }y\text{ to some vertex of }I & \text{ if } s(\beta_I)=-.
%    \end{cases}$ \\
%\end{itemize}
\subsection{A crepant resolution which does not factor through a projective partial resolution}

Armed with the preceding description, we generalize the construction by replacing the single condition $|I|=m$ by a more flexible condition on the subsets $I$; at the same time we replace the function $s$ and the complement $I^c$  by a more symmetric condition for subsets $I$ and $J$ to which $x$ and $y$ will have nonzero paths, respectively.  

We first present this in the setting of $Q_0=\{x,1,2,3,4,5,y\}$, with $Q_1$ consisting of one arrow from each of $x,y$ to each of $1,2,3,4,5$ (so ten arrows total), and $\alpha=(1,1,1,1,1,1,1)$. This is equivalent to the framed five-pointed
star with framing of one at each endpoint.

Let $\mathcal{I}$ be the set of subsets $I \subseteq \{1,2,3,4,5\}$ such that
\begin{equation}\label{e:fivepoint-icond}
I \supseteq \{1,2\} \quad \text{\emph{or}} \quad I \supseteq \{3, 4,5\}.
\end{equation}
Let $\mathcal{J}$ be the set of subsets $J \subseteq \{1,2,3,4,5\}$ be such that
\[
J \cap \{1,2\} \neq \emptyset \quad \text{\emph{and}} \quad J \cap \{3,4,5\} \neq \emptyset.
\]
Then, we consider the open subset $U \subseteq \mu_{\alpha}^{-1}(0)$ of representations of $\overline{Q}$ of dimension $\alpha=(1,1,1,1,1,1,1)$ such that:
\begin{itemize}
\item For some $I \in \mathcal{I}$, there  is a nonzero path from $x$ to each $i \in I$;
\item For some $J \in \mathcal{J}$, there is a nonzero path from $y$ to each $j \in J$;
\item For every $i \in \{1,2,3,4,5\}$, there is a nonzero path from either $x$ or $y$ to $i$.
\end{itemize}
%Notice, as explained in the proof of Theorem \ref{thm:general_nonprojective}, the smoothness of $U /\!/ G_{\alpha}$ relies on the fact that $I \cap J \neq \emptyset$, by construction of $\mathcal{I}$ and $\mathcal{J}$.

\begin{thm}\label{t:five-point-git}
\begin{itemize}
    \item[(a)] There is a geometric quotient  $X:=U/\!/G_\alpha$ equipped with a canonical morphism $\pi_U: X \to \cM_0(Q,\alpha)$ which is a (proper) crepant resolution of singularities.
    \item[(b)] There does not exist a nontrivial projective crepant partial resolution $Y \to \cM_0(Q,\alpha)$ such that $\pi_U$ factors as $X \to Y \to \cM_0(Q,\alpha)$.
\end{itemize}
\end{thm}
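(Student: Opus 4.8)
The plan is to establish (a) by verifying the two hypotheses of the general machinery: that $U$ is $G_\alpha$-stable (clear, since all conditions defining $U$ are phrased in terms of nonvanishing of compositions of arrow maps, which are permuted up to scalars by the torus action) with $U \subseteq \mu_\alpha^{-1}(0)^{\theta\text{-ss}}$ for a suitable $\theta$ (here one checks directly that each of the loci in the union lies in the semistable locus for, say, $\theta = (2,-1,-1,-1,-1,-1,2)$ in the unframed picture, using that the presence of nonzero paths from $x$ or $y$ into the given vertices rules out destabilizing subrepresentations); that the good quotient $U/\!/G_\alpha$ exists and is geometric; and that $\pi_U$ is proper. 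For the geometric quotient claim I would invoke Theorem \ref{t:hilbert} together with the criterion recorded after Lemma \ref{l:good-quotient}: since $\alpha=(1,\ldots,1)$ and every representation in $U$ is supported so that the relevant arrow maps are nonzero, every $G_\alpha=(\bC^\times)^{7}/\bC^\times$-orbit in $U$ is closed (stabilizers are trivial and the conditions are open and $G_\alpha$-stable, so no orbit degenerates within $U$), hence the good quotient is geometric. Smoothness of $X$ follows because $U$ consists of points where $G_\alpha$ acts freely on a smooth locus of $\mu^{-1}(0)$ (the Jacobian criterion for the moment map equations holds exactly at stable points), so $X$ is a manifold. Crepancy is inherited from the crepancy of $\mu^{-1}(0)^{\theta\text{-ss}}/\!/G_\alpha \to \cM_0(Q,\alpha)$: $X$ is an open symplectic subvariety of the smooth symplectic locus glued appropriately, and the pullback of the symplectic form on the smooth locus of $\cM_0$ extends to a symplectic form, so the resolution is crepant by \cite{Kaledin_Crepant}; birationality is immediate since $U$ contains the simple locus (which is the semistable locus for generic $\theta$ intersected with the open dense stratum).

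The genuinely delicate point in (a) is \emph{properness} of $\pi_U$, and I expect this to be the main obstacle. Unlike in Section \ref{s:loc-proj}, where $\pi_U$ was manifestly proper as a composite of locally projective morphisms, here we must prove it directly. I would use the valuative criterion: given a DVR $R$ with fraction field $K$, a $K$-point of $X$ and an $R$-point of $\cM_0(Q,\alpha)$ compatible with it, lift the $K$-point to a $K$-representation $\rho_K \in U(K)$, and show (after a $G_\alpha(K)$-change of basis, i.e.\ rescaling by powers of the uniformizer at each vertex) that $\rho_K$ extends to an $R$-point of $U$ whose special fiber still satisfies the defining conditions of $U$ — the combinatorial content is that the three bullet conditions defining $U$ (existence of nonzero paths realizing some $I\in\mathcal I$ from $x$, some $J\in\mathcal J$ from $y$, and covering every vertex from $x$ or $y$) are preserved under \emph{some} admissible rescaling limit. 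This is where the specific choices of $\mathcal I$ and $\mathcal J$ matter: one needs that $\mathcal I$ is ``upward closed enough'' and $\mathcal J$ ``large enough'' that no limiting degeneration can force the representation out of $\bigcup_{I,J}$; I would argue by tracking valuations of the arrow maps, showing that after optimal rescaling the surviving nonzero paths always contain an admissible pair. Uniqueness of the extension follows from separatedness, which in turn follows once we know the quotient is geometric and $U$ is $G_\alpha$-stable and contains a dense stable locus.

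For (b), the strategy is the one foreshadowed in the introduction. Suppose $\pi_U$ factored as $X \to Y \to \cM_0(Q,\alpha)$ with $Y$ a nontrivial projective crepant partial resolution. By Theorem \ref{thm:BCS}, $Y \cong \cM_{\theta'}(Q,\alpha)$ for some $\theta'$, so $\mu_\alpha^{-1}(0)^{\theta'\text{-ss}} = U_{s_{\theta'}}$ for the sign function $s_{\theta'}(\beta)=\sgn(\theta'\cdot\beta)$ on the roots $\Phi$. Then $U \subseteq U_{s_{\theta'}}$ (the factorization, combined with the fact that both are geometric quotients of open $G_\alpha$-stable subsets with $\pi_U$ birational, forces an inclusion of open sets — this is the corrected version of the commented-out Lemma, which one can run directly here since $U_{s_{\theta'}}$ is genuinely a semistable locus and so $(U\cup U_{s_{\theta'}})/\!/G_\alpha$ exists). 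From $U \subseteq U_{s_{\theta'}}$ one extracts a contradiction with the combinatorics of $\mathcal I$: for each $I \in \mathcal I$ the locus $U_{s,I}$-type stratum forces, via King's criterion applied to $U_{s_{\theta'}}$, an inequality on $\theta'\cdot e_x + \sum_{i\in I}\theta'\cdot e_i$; because $\mathcal I$ consists of the two ``incomparable'' conditions $I\supseteq\{1,2\}$ or $I\supseteq\{3,4,5\}$, these inequalities are jointly unsatisfiable by exactly the same additivity-of-the-dot-product argument as in Theorem \ref{thm:four-point} — e.g.\ one finds $\theta'$-values forced to be simultaneously positive and negative on a relation $\sum \beta_{I_a} = \sum \beta_{J_b}$ among roots coming from $\mathcal I$ and $\mathcal J$. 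The nontriviality hypothesis on $Y$ is needed only to exclude $\theta'=0$. I expect step (b) to be routine once (a) and the inclusion $U\subseteq U_{s_{\theta'}}$ are in hand; the real work is the properness argument in (a).
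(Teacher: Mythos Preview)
Your outline for (a) contains a claim that is actually false and is, in fact, the crux of (b): you assert $U \subseteq \mu_\alpha^{-1}(0)^{\theta\text{-ss}}$ for some nonzero $\theta$. The paper proves, as its key lemma for (b), that the \emph{only} $\theta\in\bZ^{Q_0}\cap\alpha^\perp$ with $U\subseteq\mu^{-1}(0)^{\theta\text{-ss}}$ is $\theta=0$: $U$ contains representations $V^I$ (only the arrows $x\to i$ for $i\in I$ and $y\to i$ for all $i$ nonzero) for $I=\{1,2\}$ and $I=\{3,4,5\}$, as well as $V_J$ (only $y\to j$ for $j\in J$ and $x\to i$ for all $i$ nonzero) for every $J\in\mathcal J$; the resulting King inequalities on their evident subrepresentations force $\theta=0$. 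So your crepancy argument, which treats $X$ as open in some $\cM_\theta(Q,\alpha)$, collapses. The paper argues crepancy directly: $U/\!/PG_\alpha$ is a free Hamiltonian reduction of an open subset of the symplectic vector space $\Rep(\overline{Q},\alpha)$, hence symplectic. You also elide separatedness---a geometric quotient is in general only a prevariety---which the paper handles via an orbit-cone argument showing any two points of $X$ lie in a common quasi-projective open subset. Your properness sketch is in the right spirit; the paper makes it precise by an iterated rescaling lemma that enlarges the set of vertices reachable from $x$ (then from $y$) by applying diagonal uniformizer powers.

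For (b), you want to deduce $U\subseteq\mu^{-1}(0)^{\theta'\text{-ss}}$ directly from a factorization $X\to\cM_{\theta'}(Q,\alpha)$ and then apply the lemma above. The implication ``factorization over $\cM_0 \Rightarrow$ inclusion of open sets upstairs'' is exactly the statement the authors attempted and then abandoned as unjustified (it needs a good quotient of $U\cup\mu^{-1}(0)^{\theta'\text{-ss}}$, which you have not produced). The paper takes a different route. It first shows the linearization map $PG_\alpha^\vee\to\Pic(X)$ is surjective, by comparing $U$ with $\mu^{-1}(0)^{\theta\text{-ss}}$ for the ``egalitarian'' $\theta=(5,-2,\ldots,-2,5)$ (the $\theta$-unstable locus in $U$ has codimension $\geq 3$) and invoking McGerty--Nevins surjectivity for $\cM_\theta$. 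Then a factorization through a nontrivial projective $Y$ would pull back an ample bundle to a stably globally generated $L_U(\chi)$ with $\chi\neq 0$; but since $U\not\subseteq\mu^{-1}(0)^{\chi\text{-ss}}$ by the lemma, any $\chi$-unstable point of $U$ is a common zero of all sections of every $L_U(\chi)^m$, contradicting global generation.
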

We prove Theorem \ref{t:five-point-git}(a) later, as a consequence of a  more general result (Theorem \ref{t:stars-git}). The main step establishes that $\pi_U$ is \emph{proper}, which follows from a careful application of the valuative criterion of properness: given a discrete valuation ring $R$ with %residue field $k$ and 
fraction field $K$, the proof boils down to showing that every $G_\alpha(K)$ orbit in $U(K)$
which lies over $\cM_0(Q,\alpha)(R)$ contains a point of $U(R)$.
%which reduces in the residue field to a point of $U(k)$.

%For the proof of Theorem \ref{t:five-point-git}(b), we will need two key lemmas:
%\begin{itemize}
%    \item Lemma 1: $U \notsubseteq \mu^{-1}(0)^{\theta-\text{ss}}$ for any stability condition $\theta \neq 0$.
%    \item Lemma 2: For $\theta = (5, -2, -2, -2, -2, -2, 5)$, the $'theta$-unstable locus $U \backslash  \mu^{-1}(0)^{\theta-\text{ss}}$ is codimension at least 2 in $U$. 
%\end{itemize}

The proof of Theorem \ref{t:five-point-git}(b) is based on the following lemma:
\begin{lem} The only $\theta \in \bZ^{Q_0} \cap \alpha^\perp$ for which $U \subseteq \mu^{-1}(0)^{\theta-ss}$ is $\theta=0$.
\end{lem}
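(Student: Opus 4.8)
The plan is to show that the open set $U$ fails to be contained in $\mu^{-1}(0)^{\theta-ss}$ for any nonzero $\theta$, by exhibiting, for each such $\theta$, an explicit representation in $U$ which has a $\theta$-destabilizing subrepresentation. First I would record the general principle: since $\alpha=(1,\dots,1)$, every subrepresentation of a representation $\rho$ with dimension vector $\alpha$ is determined by a subset $S \subseteq Q_0$ which is ``closed under nonzero arrows'' in $\rho$ (if $i \in S$ and some arrow $i \to j$ acts nonzero then $j \in S$), and $\rho \in \mu^{-1}(0)^{\theta-ss}$ iff no such $S$ has $\theta \cdot e_S > 0$. So it suffices, given $\theta \neq 0$ with $\theta \cdot \alpha = 0$, to build $\rho \in U$ together with such a subset $S$ with $\sum_{i \in S}\theta_i > 0$.

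The key step is to understand which subsets $S$ are realizable as supports of subrepresentations of points of $U$. By the definition of $U$, a point $\rho \in U$ has: a set $I_\rho \in \mathcal{I}$ of vertices reachable by nonzero paths from $x$; a set $J_\rho \in \mathcal{J}$ of vertices reachable by nonzero paths from $y$; and $I_\rho \cup J_\rho = \{1,2,3,4,5\}$. I would argue the converse is essentially free: given any $I \in \mathcal{I}$, $J \in \mathcal{J}$ with $I \cup J = \{1,\dots,5\}$, one can choose $\rho \in U$ in which the \emph{only} nonzero maps are the ten arrows $x\to i$ for $i \in I$, $y \to j$ for $j \in J$, and all starred arrows zero (one checks the preprojective relations hold trivially since no two-cycles of nonzero maps occur, and $\theta$-semistability of such $\rho$ will be automatic once we pick the $\theta$ we care about, or is irrelevant since we only need $\rho \in U$). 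For such $\rho$, the proper nonzero subrepresentations have support exactly the sets $S$ with $x \in S$, $S \supseteq I$, $S \cap (\text{nonzero targets from }S\setminus\{x,y\}) \subseteq S$; in the arrow-only model, $\{x\} \cup I$ and $\{y\} \cup J$ and their combinations work. Concretely $\{x\}\cup I$ and $\{y\}\cup J$ are always subrepresentation supports.

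So I must show: for every $\theta \neq 0$ in $\alpha^\perp$, there exist $I \in \mathcal{I}$, $J \in \mathcal{J}$ with $I \cup J = \{1,\dots,5\}$ such that $\theta_x + \sum_{i \in I}\theta_i > 0$ or $\theta_y + \sum_{j \in J}\theta_j > 0$. Using $\theta \cdot \alpha = 0$, i.e. $\theta_x + \theta_y + \sum_{i=1}^5 \theta_i = 0$, the two candidate quantities for the complementary pair $(I, J)=(I, I^c \cup (\text{overlap}))$ sum in a controlled way; a clean sub-case analysis on the signs and relative sizes of $\theta_x, \theta_y$ and the $\theta_i$ — splitting according to whether $\theta_x \geq \theta_y$ and exploiting the freedom that $\mathcal{I}$ contains all supersets of $\{1,2\}$ or of $\{3,4,5\}$ while $\mathcal{J}$ only needs to meet both $\{1,2\}$ and $\{3,4,5\}$ — should always let me pick $I$ to scoop up the vertices where $\theta_i$ is most positive. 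If no choice of $(I,J)$ destabilizes, I can push the resulting system of inequalities (over all admissible $I$, using that $\mathcal{I}$ is large) to force $\theta_i \leq 0$ for all $i$, then $\theta_x \leq 0$ and $\theta_y \leq 0$, and finally $\theta = 0$ from $\theta \cdot \alpha = 0$.

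The main obstacle I anticipate is the bookkeeping in that last step: the constraints on $\mathcal{I}$ are asymmetric (a specific pair $\{1,2\}$ versus a specific triple $\{3,4,5\}$), so one cannot simply invoke symmetry, and the argument needs to handle the interplay between the required $x$-reachable set, the required $y$-reachable set, and the covering condition $I \cup J = \{1,\dots,5\}$ simultaneously. I would organize it by first observing that if any $\theta_i > 0$ then putting that $i$ into $I$ (and completing to an element of $\mathcal{I}$ greedily, preferring the large triple $\{3,4,5\}$ which gives more room) together with a minimal valid $J$ forces one of the two sums positive unless $\theta_x$ is very negative, and then separately rule out the remaining ``$\theta_x \ll 0$'' regime using the $y$-side inequality and $\theta\cdot\alpha = 0$. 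This is elementary but requires care to make exhaustive.
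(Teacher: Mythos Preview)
Your strategy is the paper's: build explicit representations in $U$ with all starred arrows zero and a prescribed pattern of nonzero arrows $x\to i$, $y\to j$, then read off inequalities from their subrepresentations. Two points of divergence are worth flagging.

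First, you mischaracterize the subrepresentations of your test representations $\rho$. Since every starred arrow vanishes, each singleton $\{i\}$ for $i\in\{1,\dots,5\}$ supports a subrepresentation of $\rho$; this immediately gives $\theta_i\le 0$ for all $i$. The paper uses exactly this (their $S_i$), and it is the cleanest starting inequality. Your description ``sets $S$ with $x\in S$, $S\supseteq I$\dots'' omits these, which is why you are forced to plan a roundabout derivation of $\theta_i\le 0$.

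Second, your endgame has the signs backwards. From $\theta_y+\theta_J\le 0$ and $\theta\cdot\alpha=0$ one gets $\theta_x+\theta_{J^c}\ge 0$; taking $J=\{1,\dots,5\}\in\mathcal J$ gives $\theta_x\ge 0$, not $\theta_x\le 0$ (similarly $\theta_y\ge 0$). The paper avoids any case analysis: combining $\theta_x+\theta_I\le 0$ with $\theta_y+\theta_J\le 0$ via $\theta\cdot\alpha=0$ yields $\theta_{I^c}\ge \theta_J$ for every $I\in\mathcal I$, $J\in\mathcal J$. Choosing $J=(I^c\cup\{i\})\setminus\{j\}$ with $I\in\{\{1,2\},\{3,4,5\}\}$ and appropriate $i,j$ forces all $\theta_i$ equal; one further choice together with $\theta_i\le 0$ then gives $\theta_i=0$, hence $\theta_x=\theta_y=0$. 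Your approach can be completed along these lines, but the sketch as written would not.
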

\begin{proof}
By definition, $U$ contains the following representations for all $I \in \mathcal{I}, J \in \mathcal{J}$:
\[
\scalebox{0.9}{
 \begin{tikzpicture}
    %Nodes
    \node (label) at (-2/1.2-1, 0.08) {\scalebox{1.25}{$V^I:=$}};
    \node (central) at (0,-2) {\textcolor{blue}{\scalebox{1.25}{$\mathbb{C}$}}};
    \node (v1) at (-2/1.2, 0) {\scalebox{1.25}{$\mathbb{C}$}};
    \node (v2) at (-1/1.2, 0) {\scalebox{1.25}{$\mathbb{C}$}};
    \node (v3) at (0, 0) {\scalebox{1.25}{$\mathbb{C}$}};
    \node(v4) at (1/1.2, 0) {\scalebox{1.25}{$\mathbb{C}$}};
    \node (v5) at (2/1.2, 0) {\scalebox{1.25}{$\mathbb{C}$}};
    \node(infinity) at (0, 2) {\textcolor{red}{\scalebox{1.25}{$\mathbb{C}$}}};
    %Arrows
    \draw [-{Stealth[length=3mm]}, shorten >=1mm, shorten <=1mm] (central) -- (v1);
    \draw [-{Stealth[length=3mm]}, shorten >=1mm, shorten <=1mm] (central) -- (v2);
    \draw [-{Stealth[length=3mm]}, shorten >=1mm, shorten <=1mm] (infinity) -- (v1);
    \draw [-{Stealth[length=3mm]}, shorten >=1mm, shorten <=1mm] (infinity) -- (v2);
    \draw [-{Stealth[length=3mm]}, shorten >=1mm, shorten <=1mm] (infinity) -- (v3);
    \draw [-{Stealth[length=3mm]}, shorten >=1mm, shorten <=1mm] (infinity) -- (v4);
    \draw [-{Stealth[length=3mm]}, shorten >=1mm, shorten <=1mm] (infinity) -- (v5);
\end{tikzpicture} \hspace{1.7cm}
 \begin{tikzpicture}
    %Nodes
    \node (label) at (-2/1.2-1, 0) {\scalebox{1.25}{$V_J:=$}};
    \node (central) at (0,-2) {\textcolor{blue}{\scalebox{1.25}{$\mathbb{C}$}}};
    \node (v1) at (-2/1.2, 0) {\scalebox{1.25}{$\mathbb{C}$}};
    \node (v2) at (-1/1.2, 0) {\scalebox{1.25}{$\mathbb{C}$}};
    \node (v3) at (0, 0) {\scalebox{1.25}{$\mathbb{C}$}};
    \node(v4) at (1/1.2, 0) {\scalebox{1.25}{$\mathbb{C}$}};
    \node (v5) at (2/1.2, 0) {\scalebox{1.25}{$\mathbb{C}$}};
    \node(infinity) at (0, 2) {\textcolor{red}{\scalebox{1.25}{$\mathbb{C}$}}};
    %Arrows
     \draw [-{Stealth[length=3mm]}, shorten >=1mm, shorten <=1mm] (central) -- (v1);
    \draw [-{Stealth[length=3mm]}, shorten >=1mm, shorten <=1mm] (central) -- (v2);
    \draw [-{Stealth[length=3mm]}, shorten >=1mm, shorten <=1mm] (central) -- (v3);
    \draw [-{Stealth[length=3mm]}, shorten >=1mm, shorten <=1mm] (central) -- (v4);
     \draw [-{Stealth[length=3mm]}, shorten >=1mm, shorten <=1mm] (central) -- (v5);
    \draw [-{Stealth[length=3mm]}, shorten >=1mm, shorten <=1mm] (infinity) -- (v1);
    \draw [-{Stealth[length=3mm]}, shorten >=1mm, shorten <=1mm] (infinity) -- (v3);
\end{tikzpicture}
}
\]
where each depicted arrow is the identity and each omitted arrow is zero. For $V^I$ (resp. $V_J$) the only nonzero maps from \textcolor{blue}{$x$} (resp. \textcolor{red}{$y$}) land in $I$ (resp. $J$). 

Notice that for each $i \in \{1, 2, \dots, 5 \}$, $V^I$ has a subrepresentation, $S_i$, supported on $\{i \}$. Further $V^I$ has a subrepresentation, $W^I$, supported on $I \cup \{ x \}$ and $V_J$ has a subrepresentation, $W_J$, supported on $J \cup \{ y \}$.  

Assume that $U \subseteq \mu^{-1}(0)^{\theta-\text{ss}}$, meaning each representation in $U$ is $\theta$-semistable. Then each subrepresentation $W$ has dimension vector satisfying $\dim(W) \cdot \theta \leq 0$. This imposes (at least) the following three classes of inequalities on the coordinates of $\theta = (\theta_x, \theta_1, \theta_2, \dots, \theta_5, \theta_y)$ (writing $\theta_I := \sum_{i \in I} \theta_i$):
\begin{itemize}
\item $S_i$ is a subrepresentation of the $\theta$-semistable $V^I$, so  
$\theta \cdot \dim(S_i) = \theta_i \leq 0$;
\item $W^I$ is a subrepresentation of the $\theta$-semistable $V^I$, so $\theta \cdot \dim(W^I) = \theta_x + \theta_I \leq 0$;
\item $W_J$ is a subrepresentation of the $\theta$-semistable $V_J$, so $\theta \cdot \dim(W_J) = \theta_y + \theta_J \leq 0$.
\end{itemize}
Using the equality $0 = \alpha \cdot \theta = \theta_x + \theta_1 + \cdots + \theta_5 + \theta_y$ we can transform each above inequality as follows:
\[
\theta_x + \theta_I \leq 0 \ \text{ and } \  \alpha \cdot \theta = 0  \ \implies \ \theta_y + \theta_{I^c} \geq 0.
\]
In particular, we get a string of inequalities
\[
 \theta_y + \theta_{I^c} \geq 0 \geq  \theta_y + \theta_J   \implies \theta_{I^c} - \theta_{J} \geq 0.
\]
Taking $J = (I^c \cup \{ i \}) \backslash \{ j \}$ gives $\theta_j - \theta_i \geq 0$. By taking $I = \{ 1, 2 \}$ or $\{ 3, 4, 5 \}$ and varying $i$ and $j$ we get $\theta_i = \theta_j$ for all $i, j$. Now setting $I = \{ 1, 2\}$ and $J = \{1, 3 \}$ gives $0 = \theta_4+\theta_5 -\theta_1 = \theta_4$. Hence all $\theta_i = 0$ which implies $\theta_x = 0 = \theta_y$. We conclude that $\theta = 0$. 
\end{proof}

To deduce from the lemma that $U/\!/G_\alpha$ does not factor through a nontrivial projective partial resolution, we will show that its only globally generated line bundle is trivial. In turn we need to understand its Picard group of isomorphism classes of line bundles. For this purpose we will compare it with $\cM_\theta(Q,\alpha)$.  For the latter we have the following result of McGerty--Nevins:
\begin{thm}[\cite{MN_Kirwansurj} Theorem 1.2, \cite{BCS23} Theorem 4.2]   \label{t:MN_Kirwansurj}
If $\alpha \in \Sigma_0$ (there is a simple representation in $\mu_\alpha^{-1}(0)$) and $\alpha_i=1$ for some $i \in Q_0$, then for generic $\theta \in \bZ^{Q_0} \cap \alpha^\perp$, descent furnishes a surjection
$L_\theta: PG_{\alpha}^\vee \to \Pic(\cM_\theta(Q,\alpha))$.
\end{thm}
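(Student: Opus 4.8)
The plan is to reduce the statement to the \emph{cohomological} Kirwan surjectivity of \cite[Theorem 1.2]{MN_Kirwansurj}, and then transport the conclusion to $\Pic$ via the first Chern class. First I would assume $\theta$ generic, so that $\mu_\alpha^{-1}(0)^{\theta\text{-ss}}=\mu_\alpha^{-1}(0)^{\theta\text{-s}}$; since $\alpha\in\Sigma_0$ this locus is nonempty and smooth, and the quotient $p\colon\mu_\alpha^{-1}(0)^{\theta\text{-s}}\to\cM_\theta(Q,\alpha)$ realizes its source as a $PG_\alpha$-torsor (the $G_\alpha$-stabilizer of a stable point is the diagonal $\bC^\times$, trivial in $PG_\alpha$). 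The hypothesis $\alpha_i=1$ enters here: it forces $\pi_1(PG_\alpha)=\bZ^{Q_0}/\bZ\alpha$ to be torsion-free, hence $\Pic(PG_\alpha)=0$, so every line bundle on the stable locus acquires a $PG_\alpha$-equivariant structure, and descent along $p$ gives $\Pic(\cM_\theta(Q,\alpha))\cong\Pic^{PG_\alpha}(\mu_\alpha^{-1}(0)^{\theta\text{-s}})$, under which $L_\theta$ becomes $\chi\mapsto[\mathcal{O}_\chi]$ (the trivial bundle with equivariant structure twisted by $\chi$). It then remains to show $L_\theta$ is surjective.

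Next I would check that $c_1\colon\Pic(\cM_\theta(Q,\alpha))\hookrightarrow H^2(\cM_\theta(Q,\alpha);\bZ)$ is injective. Since $\cM_0(Q,\alpha)$ is a conical symplectic singularity it has rational singularities, so $R\pi_*\mathcal{O}_{\cM_\theta(Q,\alpha)}=\mathcal{O}_{\cM_0(Q,\alpha)}$ for $\pi\colon\cM_\theta(Q,\alpha)\to\cM_0(Q,\alpha)$, and affineness of $\cM_0(Q,\alpha)$ then forces $H^{>0}(\cM_\theta(Q,\alpha),\mathcal{O})=0$; in particular $H^1(\mathcal{O})=0$, and the exponential sequence gives the desired injectivity.

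The heart of the argument is the cohomological input. The $\bC^\times$-action rescaling all arrows of $\overline{Q}$ preserves $\mu_\alpha^{-1}(0)$ (it multiplies the moment map by $t^2$), commutes with $G_\alpha$, and through its positive real locus contracts $\mu_\alpha^{-1}(0)$ onto the $G_\alpha$-fixed point $0$; hence $H^*_{G_\alpha}(\mu_\alpha^{-1}(0))\cong H^*(BG_\alpha)$, whose degree-two part is freely generated by the first Chern classes of the standard representations of the factors $\GL_{\alpha_v}(\bC)$. By \cite[Theorem 1.2]{MN_Kirwansurj} the Kirwan map $\kappa\colon H^*_{G_\alpha}(\mu_\alpha^{-1}(0))\to H^*(\cM_\theta(Q,\alpha))$ is surjective, and in degree two it sends these generators to the Chern classes $c_1(\mathcal{R}_v)$ of the tautological bundles $\mathcal{R}_v$, which are exactly $c_1(L_\theta(\det_v))$; since the $\det_v$ generate $PG_\alpha^\vee$ (modulo the single relation imposed by $\alpha$), the composite $c_1\circ L_\theta\colon PG_\alpha^\vee\to H^2(\cM_\theta(Q,\alpha);\bZ)$ has image all of $H^2$. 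Combining with the injectivity of $c_1$: given $M\in\Pic(\cM_\theta(Q,\alpha))$, choose $\chi$ with $c_1(L_\theta(\chi))=c_1(M)$; then $c_1(M\otimes L_\theta(\chi)^{-1})=0$ forces $M\cong L_\theta(\chi)$. Hence $L_\theta$ is surjective.

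The step I expect to be the real obstacle is the passage from rational to integral coefficients: \cite{MN_Kirwansurj} establishes Kirwan surjectivity with $\bQ$-coefficients, so to obtain surjectivity onto $\Pic$ (torsion-free here, but one should not assume that a priori) one must further use that $H^*(\cM_\theta(Q,\alpha);\bZ)$ is torsion-free -- e.g.\ via an affine paving coming from a generic one-parameter subgroup -- and that $H^2(\cM_\theta(Q,\alpha);\bZ)$ is already spanned integrally by the tautological Chern classes; this integral refinement is the substantive content of \cite[Theorem 4.2]{BCS23}. A smaller technical point is making the $\bC^\times$-contraction rigorous enough to compute $H^*_{G_\alpha}(\mu_\alpha^{-1}(0))$ and to match its degree-two part with the tautological bundles.
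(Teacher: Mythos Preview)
The paper does not prove this theorem at all: it is quoted as a black box from \cite{MN_Kirwansurj} and \cite{BCS23}, with only a remark that in the toric hyperk\"ahler case $\alpha=(1,\ldots,1)$ an alternative argument via tilting bundles is available (referencing \cite[Theorem 5.1]{BCS23}). So there is nothing in the paper to compare your sketch against.

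That said, your outline is a faithful reconstruction of the strategy behind the cited references: reduce to cohomological Kirwan surjectivity, use rational singularities to inject $\Pic$ into $H^2$, and then pull back along $c_1$. Two comments. First, your use of $\alpha_i=1$ is slightly roundabout: rather than arguing via $\pi_1(PG_\alpha)$, the more direct point is that $\alpha_i=1$ makes the diagonal $\bC^\times\hookrightarrow G_\alpha$ split (project to the $i$-th factor), so $PG_\alpha\cong\prod_{j\neq i}\GL_{\alpha_j}$ is itself a product of general linear groups and hence has trivial Picard group; this is what makes every line bundle on the stable locus $PG_\alpha$-linearizable. Second, you correctly flag the real subtlety, namely the passage from rational to integral Kirwan surjectivity; that refinement is precisely the content imported from \cite{BCS23}, and without it your argument only yields surjectivity of $L_\theta\otimes\bQ$.
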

\begin{rem}
In the example at hand, $\alpha=(1,\ldots, 1)$, the smooth variety $\cM_\theta(Q,\alpha)$ is toric hyperk\"ahler. Therefore, an alternative argument is possible using tilting bundles; see the first paragraph in the proof of \cite[Theorem 5.1]{BCS23}.
\end{rem}
\par\noindent
\begin{minipage}{0.75\textwidth} 
\noindent 
Let $S$ be the representation of $\overline{Q}$ shown on the right, where each depicted arrow is the identity and each omitted arrow is zero. Notice that $S$ is simple and lies in $\mu^{-1}(0)$, so $\alpha \in \Sigma_0$.

\hspace*{2em} Since further $\alpha_i=1$ for all $i \in Q_0$, Theorem \ref{t:MN_Kirwansurj} applies to the pair $(Q,\alpha)$ at hand.  To deduce the corresponding result for $U/\!/G_\alpha$, we compare $U$ with the $\theta$-semistable locus for the ``egalitarian'' $\theta$ giving $\theta_i$ the same values for $1 \leq i \leq 5$, and $\theta_x=\theta_y$:
\end{minipage} \hspace{.25cm}
\begin{minipage}{0.3\textwidth}
\scalebox{0.8}{
 \begin{tikzpicture}
    %Nodes
    \node (central) at (0,-2) {\textcolor{blue}{\scalebox{1.25}{$\mathbb{C}$}}};
    \node (v1) at (-2/1.2, 0) {\scalebox{1.25}{$\mathbb{C}$}};
    \node (v2) at (-1/1.2, 0) {\scalebox{1.25}{$\mathbb{C}$}};
    \node (v3) at (0, 0) {\scalebox{1.25}{$\mathbb{C}$}};
    \node(v4) at (1/1.2, 0) {\scalebox{1.25}{$\mathbb{C}$}};
    \node (v5) at (2/1.2, 0) {\scalebox{1.25}{$\mathbb{C}$}};
    \node(infinity) at (0, 2) {\textcolor{red}{\scalebox{1.25}{$\mathbb{C}$}}};
    %Arrows
    \draw [-{Stealth[length=3mm]}, shorten >=1mm, shorten <=1mm] (central) -- (v1);
    \draw [-{Stealth[length=3mm]}, shorten >=1mm, shorten <=1mm] (central) -- (v2);
    \draw [-{Stealth[length=3mm]}, shorten >=1mm, shorten <=1mm] (central) -- (v3);
    \draw [-{Stealth[length=3mm]}, shorten >=1mm, shorten <=1mm] (central) -- (v4);
     \draw [-{Stealth[length=3mm]}, shorten >=1mm, shorten <=1mm] (v5) -- (central);
    \draw [-{Stealth[length=3mm]}, shorten >=1mm, shorten <=1mm] (v1) -- (-0.3, 1.7);
    \draw [-{Stealth[length=3mm]}, shorten >=1mm, shorten <=1mm] (v2) -- (-0.15, 1.7);
    \draw [-{Stealth[length=3mm]}, shorten >=1mm, shorten <=1mm] (v3) -- (0, 1.7);
    \draw [-{Stealth[length=3mm]}, shorten >=1mm, shorten <=1mm] (v4) -- (0.15, 1.7);
    \draw [-{Stealth[length=3mm]}, shorten >=1mm, shorten <=1mm] (0.3, 1.7) -- (v5);
\end{tikzpicture}
}
\end{minipage}

%The latter has a description from \cite{BCS23}; the result we need is:
%\begin{thm}\cite{BCS23} Suppose that $(Q,\alpha)$ is such that $\alpha \in \Sigma_0$ (there is a simple representation in $\mu_\alpha^{-1}(0)$), and that $\alpha_i=1$ for some $i \in Q_0$.  Then every crepant projective partial resolution is of the form $\cM_\theta(Q,\alpha)$ for some $\theta$. If $\theta$ is generic, then $\cM_\theta(Q,\alpha)$ is smooth, and in this case, descent also furnishes an isomorphism $L_\theta: G^\vee \iso \Pic(\cM_\theta(Q,\alpha))$.
%\end{thm}
\begin{lem}\label{l:U-theta-three}
Let $\theta = (5,-2,-2,-2,-2,-2,5)$.  Then the $\theta$-unstable locus of $U$ has codimension at least three in $U$.
\end{lem}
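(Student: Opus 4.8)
The plan is to stratify the $\theta$-unstable locus of $U$ by the ``destabilizing type'' of a representation and bound the dimension of each stratum. Since $\alpha=(1,\dots,1)$, every subrepresentation of a point $\rho\in\mu_\alpha^{-1}(0)$ is determined by its support $T\subseteq Q_0$, and $T$ supports a subrepresentation precisely when $\rho$ has no nonzero arrow from a vertex of $T$ to a vertex of $Q_0\setminus T$; by King's criterion, $\rho$ is $\theta$-unstable iff some proper nonempty such $T$ has $\theta_T:=\sum_{v\in T}\theta_v>0$. First I would classify which $T$ can be destabilizing: writing $T_0=T\cap\{1,\dots,5\}$ and $t=|T_0|$, one has $\theta_T=-2t$ if $x,y\notin T$ (never positive), $\theta_T=5-2t$ if exactly one of $x,y$ lies in $T$ (positive iff $t\le 2$), and $\theta_T=10-2t$ if $x,y\in T$ (positive for every proper $T$). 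So the only possible destabilizing supports are (a) $T=\{x\}\cup T_0$ with $|T_0|\le 2$, (b) the mirror image $T=\{y\}\cup T_0$ with $|T_0|\le 2$, and (c) $T=Q_0\setminus S$ with $\emptyset\neq S\subseteq\{1,\dots,5\}$.

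Next I would eliminate type (c) on $U$ entirely. Label the arrow values $a_i=\rho(x\to i)$, $b_i=\rho(i\to x)$, $c_i=\rho(y\to i)$, $d_i=\rho(i\to y)$ for $i=1,\dots,5$, so $\Rep_\alpha(\overline{Q})=\bA^{20}$ and $\mu_\alpha^{-1}(0)$ is cut out by $a_ib_i+c_id_i=0$ ($i=1,\dots,5$) together with $\sum_i a_ib_i=0$. A set $T=Q_0\setminus S$ containing both $x$ and $y$ supports a subrepresentation iff $a_i=c_i=0$ for every $i\in S$, since $x\to i$ and $y\to i$ are the only arrows into $i$. But the third defining condition of $U$---a nonzero path from $x$ or $y$ to every vertex $i$---forces $a_i\neq 0$ or $c_i\neq 0$ for every $i$, because the final arrow of any nonzero path ending at $i$ is $x\to i$ or $y\to i$. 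Hence no point of $U$ admits a type-(c) destabilizing subrepresentation, so the $\theta$-unstable locus of $U$ is covered by the finitely many loci $Z_T$ of types (a) and (b), where $Z_T\subseteq U$ is the set of representations admitting a subrepresentation with support $T$.

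Finally I would bound $\dim Z_T$ for $T=\{x\}\cup T_0$ (type (b) being symmetric). The condition that $T$ supports a subrepresentation is $a_i=0$ for $i\notin T_0$ and $d_i=0$ for $i\in T_0$---exactly $5$ vanishing coordinates---which cuts $\bA^{20}$ down to an $\bA^{15}$. On this $\bA^{15}$ the equations of $\mu_\alpha^{-1}(0)$ reduce to $a_ib_i=0$ for $i\in T_0$ and $c_id_i=0$ for $i\notin T_0$ (the equations $\mu_x=-\sum_i a_ib_i=0$ and $\mu_y=0$ then hold automatically), i.e.\ $5$ equations, each the vanishing of a product of two of the surviving coordinates, with all ten of those coordinates distinct. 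This locus has dimension $15-5=10$, so $\dim Z_T\le 10$. Since $U$ is open in $\mu_\alpha^{-1}(0)$, which is a complete intersection of pure dimension $\dim\Rep_\alpha(\overline{Q})-\dim PG_\alpha=20-6=14$ (it contains a simple representation, so Crawley--Boevey's results apply), the $\theta$-unstable locus of $U$ has dimension $\le 10$, hence codimension $\ge 4\ge 3$.

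I do not anticipate a real obstacle: the proof is essentially combinatorial bookkeeping. The only points requiring care are (i) verifying that type (c) is genuinely excluded by the \emph{path} condition in the definition of $U$ (not merely by the semistability one is trying to prove), and (ii) checking that the $5$ subrepresentation conditions and the $5$ surviving moment-map conditions involve disjoint pairs of coordinates, so that their codimensions add; it is also worth recording the standard equidimensionality of $\mu_\alpha^{-1}(0)$, which is what converts the dimension bound into the claimed codimension bound.
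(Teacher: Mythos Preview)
Your proof is correct and follows essentially the same approach as the paper's: classify possible destabilizing supports, observe that condition (3) in the definition of $U$ rules out the supports containing both $x$ and $y$ (the paper's ``Uns$(i)$''), and then bound the dimension of the loci where $\{x\}\cup T_0$ (or $\{y\}\cup T_0$) with $|T_0|\le 2$ supports a subrepresentation. Your explicit coordinate count on the linear slice $\bA^{15}$ is more detailed than the paper's sketch and actually yields codimension $\ge 4$, which is stronger than the stated bound of $3$; both suffice for the application.
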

\begin{proof}
A representation is $\theta$-unstable if and only if one of the following holds:
\begin{itemize}
\item Uns($i$), $i \in \{1,2,3,4,5\}$: Each arrow to vertex $i$ is zero;
\item Uns($x$) (respectively Uns($y$)): All paths from $x$ (resp. $y$) to $i$ are zero for at least three $i \in \{ 1, 2, \dots, 5 \}$;
\end{itemize}
For Uns($i$) the subrepresentation $W_i$ supported on the complement of $i$ is $\theta$-destabilizing. For Uns($x$) the subrepresentation $W_x$ supported on $c$ and all vertices accepting a non-zero path from $x$ is $\theta$-destabilizing. (The same holds replacing $x$ with $y$ everywhere in the previous sentence.) Conversely if we are given a $\theta$-destabilizing subrepresentation, its support must contain $x$ or $y$ and, if only one of these, then at most two of $1,2,3,4,5$. This happens if and only if one of the above conditions holds, taking into account the specific quiver $Q$.

By definition a representation in $U$ does not satisfy condition Uns($i$).
If $\rho \in U$ satisfies condition Uns($x$), then at most two of the five two-cycles in $\overline{Q}$ at $x$ are nonzero. The condition $\rho \in \mu^{-1}(0)$ implies these cycles are negatives of each other. This is a codimension-three condition in $U$. 
\end{proof}

\begin{comment}
\begin{enumerate}
\item One of the vertices $1,2,3,4,5$ does not have a nonzero arrow from either $x$ or $y$;
\item One of $x,y$ does not have a nonzero path to either the other of $x,y$ or to three of the vertices $1,2,3,4,5$.
\end{enumerate}
This can be seen directly because $\theta$-semistability is equivalent to the statement that for every subspace with dimension vector having positive pairing with $\theta$, that subspace has a nonzero linear map leaving the subspace (i.e., it is not a subrepresentation).  As a consequence condition (1) holds, and thanks to this the condition of $\theta$-semistability reduces to subspaces spanned by the vertex $x$ and at most two of the vertices $1,2,3,4,5$.

Now, condition (1) is satisfied for every representation in $U$.  If condition (2) is not satisfied, then for  $x$ to have nonzero paths to only at most two other vertices (necessarily in the set $1,2,3,4,5$), the condition to be in $\mu^{-1}(0)$ implies that at most two of the five two-cycles in the double quiver $\overline{Q}$ meeting $x$  are zero (they are necessarily negatives of other.  This is itself a codimension-three condition.  
\end{comment}

\begin{rem}A similar argument shows also that $\mu^{-1}(0)^{\theta-\text{ss}} \setminus U$ has codimension at least three in $\mu^{-1}(0)^{\theta-\text{ss}}$; from this one can quickly deduce that $\Pic(\cM_\theta(Q,\alpha)) \cong \Pic(U/\!/G_\alpha)$. But we will apply an argument that uses a bit less.
\end{rem}

A $PG_\alpha$-equivariant structure (``$PG_\alpha$-linearization'') on the trivial line bundle $U \times \bC \rightarrow U$ is given by a choice of character $\chi \in PG_\alpha^{\vee}$. This defines a linearization map
\[
L_U: PG^\vee \to \Pic(U/\!/PG_\alpha)
\]
sending each character to its $PG_\alpha$-equivariant trivial line bundle. 

\begin{lem}\label{l:UG-lin-surj} The linearization map $L_U$ is surjective.
\end{lem}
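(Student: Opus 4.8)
The plan is to deduce the surjectivity of $L_U$ from the known surjectivity of the linearization map of the Nakajima quiver variety $\cM_\theta(Q,\alpha)$ for the egalitarian parameter $\theta=(5,-2,-2,-2,-2,-2,5)$ of Lemma~\ref{l:U-theta-three}. The bridge is the elementary fact that on a smooth irreducible variety the Picard group is unchanged upon deleting a closed subset of codimension at least two, together with the observation that $U/\!/G_\alpha$ and $\cM_\theta(Q,\alpha)$ share a common such open subset in a way compatible with linearization.

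First I would record that this $\theta$ is generic: a direct check shows there is no decomposition $\alpha=\beta+(\alpha-\beta)$ with $0\neq\beta\neq\alpha$ and $\beta\cdot\theta=0$, so $\mu^{-1}(0)^{\theta\text{-ss}}=\mu^{-1}(0)^{\theta\text{-s}}$. Hence $PG_\alpha$ acts freely on $\mu^{-1}(0)^{\theta\text{-ss}}$, the map $\cM_\theta(Q,\alpha)\to\cM_0(Q,\alpha)$ is a smooth (projective) resolution, and, using that $\alpha\in\Sigma_0$ (witnessed by the simple representation $S$) and $\alpha_i=1$ for all $i$, Theorem~\ref{t:MN_Kirwansurj} gives a surjection $L_\theta\colon PG_\alpha^\vee\twoheadrightarrow\Pic(\cM_\theta(Q,\alpha))$. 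Next I would check that $PG_\alpha$ also acts freely on $U$: for $\rho\in U$, in the subgraph of $\overline{Q}$ spanned by the arrows on which $\rho$ is nonzero, every vertex $1,\dots,5$ receives a nonzero path from $x$ or from $y$, while the conditions on $\mathcal{I}$ and $\mathcal{J}$ force some vertex to receive nonzero paths from both $x$ and $y$, so this subgraph is connected; therefore the stabilizer of $\rho$ in $PG_\alpha=(\bC^\times)^{Q_0}/\bC^\times$ is trivial. Consequently $\pi_U\colon U\to U/\!/G_\alpha$ is a principal $PG_\alpha$-bundle, and in particular it preserves the codimension of $G_\alpha$-stable closed subsets; the same applies to $\mu^{-1}(0)^{\theta\text{-ss}}\to\cM_\theta(Q,\alpha)$.

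Then I would set $W:=U\cap\mu^{-1}(0)^{\theta\text{-ss}}$, a $G_\alpha$-stable open subset of both $U$ and $\mu^{-1}(0)^{\theta\text{-ss}}$. By Lemma~\ref{l:U-theta-three}, $U\setminus W$ is the $\theta$-unstable locus of $U$ and has codimension at least three; by the remark following that lemma, $\mu^{-1}(0)^{\theta\text{-ss}}\setminus W=\mu^{-1}(0)^{\theta\text{-ss}}\setminus U$ also has codimension at least three. Passing to quotients via the principal bundles above, $W/\!/G_\alpha$ is open in the smooth irreducible variety $U/\!/G_\alpha$ with complement of codimension at least three, and simultaneously open in the smooth irreducible variety $\cM_\theta(Q,\alpha)$ with complement of codimension at least three. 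Restriction then yields isomorphisms $\Pic(U/\!/G_\alpha)\xrightarrow{\ \sim\ }\Pic(W/\!/G_\alpha)\xleftarrow{\ \sim\ }\Pic(\cM_\theta(Q,\alpha))$, and these intertwine $L_U$, the corresponding map $L_W$, and $L_\theta$, because the $PG_\alpha$-equivariant trivial line bundle attached to a character restricts to the $PG_\alpha$-equivariant trivial line bundle attached to the same character. Hence $L_U$ is surjective if and only if $L_\theta$ is, and the latter holds by Theorem~\ref{t:MN_Kirwansurj}.

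The inputs on genericity of $\theta$ and on codimensions are already available, so the step I expect to need genuine care is the verification that $PG_\alpha$ acts freely on all of $U$ --- i.e.\ that every representation parametrized by $U$ has connected support --- since this is exactly what makes the good quotient $\pi_U$ behave like a principal bundle, hence preserve codimension, which is what licenses the comparison of Picard groups. One could instead run a Kirwan-type descent argument directly on $U$ without passing through $\cM_\theta(Q,\alpha)$, but the comparison above reuses Theorem~\ref{t:MN_Kirwansurj} with minimal extra effort.
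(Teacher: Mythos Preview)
Your argument is correct and follows essentially the same route as the paper: pass through the common open $W=U^{\theta\text{-ss}}=U\cap\mu^{-1}(0)^{\theta\text{-ss}}$, use Lemma~\ref{l:U-theta-three} for the $U$-side, and invoke Theorem~\ref{t:MN_Kirwansurj} on $\cM_\theta(Q,\alpha)$. The one efficiency you miss is that on the $\cM_\theta$-side you do not need the codimension statement from the Remark after Lemma~\ref{l:U-theta-three}: since $\overline{j}\colon W/\!/G_\alpha\hookrightarrow\cM_\theta(Q,\alpha)$ is an open embedding of smooth (hence locally factorial) varieties, restriction on Picard groups is automatically \emph{surjective}, and surjectivity is all that is required to push $L_\theta$ through to $L_U$. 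The paper explicitly flags this (``we will apply an argument that uses a bit less''), so your version works but invokes a second codimension estimate that is not strictly necessary.
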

\begin{proof}Consider the diagram with $i, j$ open embeddings and vertical quotient maps: 
\[
\begin{tikzcd}
 U  \arrow[d] & \arrow[l, "i" ']   U^{\theta-ss} \arrow[r,"j"] \arrow[d] & \mu^{-1}(0)^{\theta-ss} \arrow[d] \\
 U/\!/PG_\alpha & \arrow[l,"\overline{i}" '] U^{\theta-ss}/\!/PG_\alpha \arrow[r,"\overline{j}"] & \cM_\theta(Q,\alpha)
\end{tikzcd}
\]
It follows from Lemma \ref{l:U-theta-three} that the restriction map $i^*: \Pic_G(U) \to \Pic_G(U^{\theta\text{-ss}})$, and hence by descent, $\overline{i}^*: \Pic(U/\!/PG_\alpha) \to \Pic(U^{\theta-\text{ss}}/\!/PG_\alpha)$, is an isomorphism.  On the other hand, since $\overline{j}$ is an open embedding of smooth (hence locally factorial) varieties, $\overline{j}^*: \Pic(\cM_\theta(Q,\alpha)) \to \Pic(U^{\theta-\text{ss}} /\!/ PG_{\alpha})$ is surjective.  So it follows that all line bundles on $U/\!/PG_\alpha$ are isomorphic to those of the form $L_U(\chi)$ for some $\chi \in PG^\vee$.
\end{proof}
\begin{proof}[Proof of Theorem \ref{t:five-point-git}(b)]
Suppose that there were a morphism $\pi: U/\!/G \to Y$ for some nontrivial projective partial resolution $Y \to \cM_0(Q,\alpha)$, such that the composition is the canonical morphism $U/\!/G \to \cM_0(Q,\alpha)$.  Then, letting $E$ be an ample line bundle on $Y$, $\pi^* E$ would be a stably globally generated, nontrivial line bundle on $U/\!/G$.  By Lemma \ref{l:UG-lin-surj}, it must be of the form $L_U(\chi)$ for some nontrivial $\chi \in PG^\vee$.  
Since $U \not \subseteq \mu^{-1}(0)^{\chi-ss}$, let $u \in U \setminus \mu^{-1}(0)^{\chi\text{-ss}}$ be an unstable point.  Then all of $\mathcal{O}(U)^{\chi}$ vanishes at $u$.  Letting $\overline{u} \in U/\!/PG_\alpha$ be the image, we see that all sections of $L_\chi^m$ vanish at $\overline{u}$ for all $m \geq 1$. So $L_U(\chi)$ is not stably globally generated. This is a contradiction.
\end{proof}

\subsection{Proper crepant resolutions for general quivers}  

Next, let $Q_0=\{x,1,\ldots,n,y\}$  and $Q_1$ be any set of arrows that includes at least one arrow $x \to i$ and $y \to i$ for each $i \in \{1,\ldots,n\}$, and consider dimension vector $\alpha=(1,\ldots,1)$.  (Alternatively, one can consider a framed quiver on $Q_0=\{x,1,\ldots,n\}$, where  $Q_1$ contains a star with arrows from central vertex $x$ to the other vertices. The dimension vector is $(1,\ldots,1)$, and there is a 1-dimensional framing at vertices $\{1,\ldots,n\}$.)

The construction is based on an involution on the collection of upward-closed subsets of $\{1,\ldots,n\}$, which may be of independent interest:
\begin{defn}
A subset $\mathcal{I} \subseteq \mathcal{P}(\{1,\ldots,n\})$ is \emph{upward-closed} if for every $J \subseteq \{1,\ldots,n\}$ such that $J \supseteq I$ for some $I \in \mathcal{I}$, then $J \in \mathcal{I}$ as well.
\end{defn}
To represent $\mathcal{I}$, it is convenient to use a minimal subset $\mathcal{I}_0 \subseteq \mathcal{I}$ such that every element contains one of the sets in $\mathcal{I}_0$.  For example, if $\mathcal{I}$ is the set of all subsets containing $1$, then we can take $\mathcal{I}_0 = \{\{1\}\}$.  Note that $\mathcal{I}_0$ is an \emph{antichain}, i.e., a collection of sets, none of which contains another.  In fact, $\mathcal{I}_0$ is unique: it consists precisely of the sets in $\mathcal{I}$ which do not contain another set. This produces a well-known bijection $\mathcal{I} \leftrightarrow \mathcal{I}_0$ between upward-closed collections and antichains.

Now let $\mathcal{I} \subseteq \mathcal{P}(\{1,\ldots,n\})$ be an upward-closed subset.
We now define, from $\mathcal{I}$, another set $\mathcal{J} \subseteq \mathcal{P}(\{1,\ldots,n\})$, as follows:
\begin{equation}
\mathcal{J} := \{J \subseteq \{1,\ldots,n\} \mid I \cap J \neq \emptyset,  \text{ for all } I \in \mathcal{I} \}.
\end{equation}
Notice  that $\mathcal{J}$ is upward-closed. We have another characterization of $\mathcal{J}$:
\begin{lem} We have the formula
\[
\mathcal{J} = \{I^c: I \subseteq \{1,\ldots,n\}, I \notin \mathcal{I}\}.
\]
\end{lem}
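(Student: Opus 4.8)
The plan is to prove the two set-inclusions separately, using the defining property that $\mathcal{I}$ is upward-closed. Write $\mathcal{J}' := \{I^c : I \subseteq \{1,\ldots,n\},\ I \notin \mathcal{I}\}$ for the right-hand side; we want $\mathcal{J} = \mathcal{J}'$.

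For the inclusion $\mathcal{J}' \subseteq \mathcal{J}$, take $J = I^c$ with $I \notin \mathcal{I}$. I must check that $J$ meets every $K \in \mathcal{I}$. If some $K \in \mathcal{I}$ had $K \cap J = \emptyset$, then $K \subseteq J^c = I$, and since $\mathcal{I}$ is upward-closed, $I \in \mathcal{I}$, a contradiction. Hence $I^c \in \mathcal{J}$.

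For the reverse inclusion $\mathcal{J} \subseteq \mathcal{J}'$, take $J \in \mathcal{J}$ and set $I := J^c$, so that $J = I^c$; it suffices to show $I \notin \mathcal{I}$. If we had $I \in \mathcal{I}$, then the defining condition for $\mathcal{J}$ forces $I \cap J \neq \emptyset$, i.e. $J^c \cap J \neq \emptyset$, which is absurd. Hence $I \notin \mathcal{I}$ and $J = I^c \in \mathcal{J}'$.

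I do not anticipate any real obstacle here; both directions are one-line contradiction arguments, and the only hypothesis in play is that $\mathcal{I}$ is upward-closed (used only in the first inclusion; the second uses just that $J$ and $J^c$ are disjoint). If one wanted, one could streamline the whole thing by noting $J \in \mathcal{J} \iff (\forall I\ (I \in \mathcal{I} \Rightarrow I \not\subseteq J^c)) \iff J^c \notin \mathcal{I}$, the last equivalence being exactly upward-closedness, and $J^c \notin \mathcal{I}$ says $J = (J^c)^c \in \mathcal{J}'$. I would present the two-inclusion version for clarity.
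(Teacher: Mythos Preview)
Your proof is correct and follows essentially the same approach as the paper's: both establish the two inclusions by the same contrapositive/contradiction reasoning, invoking upward-closedness of $\mathcal{I}$ for the direction $\mathcal{J}' \subseteq \mathcal{J}$ and the trivial disjointness $J \cap J^c = \emptyset$ for the other. The streamlined biconditional you mention at the end is in fact exactly how the paper phrases it.
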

\begin{proof}
If $I \notin \mathcal{I}$, then for every $I' \in \mathcal{I}$, we have $I' \not \subseteq I$, so that $I^c \cap I' \neq \emptyset$. Conversely, if $J \cap I \neq \emptyset$ for all $I \in \mathcal{I}$, then $J^c \not \supseteq I$ for all $I \in \mathcal{I}$, or equivalently $J^c \notin \mathcal{I}$.
\end{proof}
\begin{cor}
This operation is an involution, i.e., define
\begin{equation}\label{e:ij-invol}
 \mathcal{K} := \{I \subseteq \{1,\ldots,n\} \mid \forall J \in \mathcal{J}, I \cap J \neq \emptyset\}.
\end{equation}
Then $\mathcal{K} = \mathcal{I}$.
\end{cor}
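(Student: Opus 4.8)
The plan is to derive the involution property as a short formal consequence of the preceding Lemma, which already rewrites $\mathcal{J}$ as $\{I^c : I \subseteq \{1,\ldots,n\},\ I \notin \mathcal{I}\}$. I would establish $\mathcal{K} = \mathcal{I}$ by proving the two inclusions separately.

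For $\mathcal{I} \subseteq \mathcal{K}$ the argument is purely formal and uses neither the Lemma nor upward-closedness: if $I \in \mathcal{I}$, then by the very definition of $\mathcal{J}$ every $J \in \mathcal{J}$ meets $I$, so $I$ meets every member of $\mathcal{J}$, i.e.\ $I \in \mathcal{K}$.

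For $\mathcal{K} \subseteq \mathcal{I}$ I would argue by contrapositive: if $I \notin \mathcal{I}$, the Lemma gives $I^c \in \mathcal{J}$, and since $I \cap I^c = \emptyset$ this exhibits an element of $\mathcal{J}$ disjoint from $I$, so $I \notin \mathcal{K}$. Equivalently, one can apply the Lemma a second time — legitimate because $\mathcal{J}$ is upward-closed — to get $\mathcal{K} = \{J^c : J \notin \mathcal{J}\}$, and then use that $J \in \mathcal{J} \iff J^c \notin \mathcal{I}$, which is immediate from the formula for $\mathcal{J}$ together with the bijectivity of complementation on $\mathcal{P}(\{1,\ldots,n\})$; substituting gives $\mathcal{K} = \{J^c : J^c \in \mathcal{I}\} = \mathcal{I}$.

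There is no real obstacle here; it is bookkeeping with the complement bijection. The only point worth flagging is that the inclusion $\mathcal{K} \subseteq \mathcal{I}$ genuinely invokes the Lemma, hence the standing hypothesis that $\mathcal{I}$ is upward-closed — and this hypothesis is necessary, since $\mathcal{J}$, and therefore $\mathcal{K}$, is always upward-closed, so for a non-upward-closed $\mathcal{I}$ one cannot have $\mathcal{K} = \mathcal{I}$.
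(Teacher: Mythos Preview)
Your proof is correct and essentially identical to the paper's: both prove $\mathcal{I}\subseteq\mathcal{K}$ directly from the definition of $\mathcal{J}$, and $\mathcal{K}\subseteq\mathcal{I}$ by the contrapositive using the Lemma to get $I^c\in\mathcal{J}$ when $I\notin\mathcal{I}$. Your additional remark about the necessity of upward-closedness is a nice observation not spelled out in the paper.
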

\begin{proof}
It is clear that every $I \in \mathcal{I}$ is also in $\mathcal{K}$. For the converse, suppose that $I \notin \mathcal{I}$. Then by the preceding lemma, $I^c \in \mathcal{J}$. But then $I \cap I^c = \emptyset$, so that $I \notin \mathcal{K}$.
\end{proof}

This involution fixes upward-closed subsets with a unique minimal subset. 

\begin{defn} \label{d:good_subset}
    Let $\mathcal{I} \subset \mathcal{P}( \{ 1, \dots, n \} )$ be an upward closed subset with $\mathcal{J}$ defined as above. Define $U \subseteq \mu^{-1}(0)$ to be the open subset of representations such that:
\begin{itemize}
\item[(1)] For some $I \in \mathcal{I}$, there are nonzero paths from $x$ to every vertex of $I$;
\item[(2)] For some $J \in \mathcal{J}$, there are nonzero paths from $y$ to every vertex of $J$;
\item[(3)] For every $i \in \{1,\ldots,n\}$, there is a nonzero path from either $x$ or $y$ to $i$.
\end{itemize}
\end{defn}

\begin{thm}\label{t:stars-git}
Assume that $\mathcal{I}, \mathcal{J}$ are not empty. Let $U \subseteq \mu^{-1}(0)$ be the set defined above. There is a geometric quotient $U/\!/G_{\alpha}$ such that the canonical morphism $U/\!/G_{\alpha} \to \mu^{-1}(0)/\!/G_{\alpha}$ is a (proper) crepant resolution of singularities.
\end{thm}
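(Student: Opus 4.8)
The plan is to dispose of everything except properness of $\pi_U$ by fairly standard arguments, and then to prove properness via the valuative criterion, whose combinatorial core is a tropical shortest‑path computation combined with the involution $\mathcal{I}\leftrightarrow\mathcal{J}$.

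First I would show the torus $PG_\alpha$ acts freely on $U$ (the $G_\alpha$‑action factors through $PG_\alpha$): for $\rho\in U$ pick $I\in\mathcal{I}$ and $J\in\mathcal{J}$ witnessing conditions (1),(2); since every element of $\mathcal{J}$ meets every element of $\mathcal{I}$ we get $I\cap J\neq\emptyset$, so $x$ and $y$ lie in one connected component of the support subquiver of $\rho$, and by (3) every vertex does too, so the support is connected and spanning and $\mathrm{Stab}_{PG_\alpha}(\rho)$ is trivial. Freeness forces $d\mu$ to be surjective along $U$, so $\mu^{-1}(0)$ is smooth there, the quotient $X=U/PG_\alpha$ exists as a smooth algebraic space carrying the symplectic form descended from $\mu^{-1}(0)^{\sm}$ by symplectic reduction, and every $PG_\alpha$‑orbit in $U$ has dimension $\dim PG_\alpha$, hence is closed in $U$; so once $X$ is known to be a variety it is a geometric quotient. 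Since the generic representation (all arrows nonzero) lies in $U$ and, as $\alpha\in\Sigma_0$, $\mu^{-1}(0)$ is irreducible, the canonical morphism $\pi_U\colon X\to\cM_0(Q,\alpha)$ is birational, and being symplectic it is crepant. So the sole remaining point — which also upgrades $X$ to a variety, being proper over the affine scheme $\cM_0(Q,\alpha)$ — is that $\pi_U$ is proper.

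For properness I would use the valuative criterion. Let $R$ be a DVR with fraction field $K$ and valuation $v$. Since $PG_\alpha$ is a torus, $PG_\alpha$‑torsors over $\Spec K$ and $\Spec R$ are trivial, so a $K$‑point of $X$ lifts to some $\rho\in U(K)$ and (for the uniqueness half) $R$‑points of $X$ lift to $U(R)$; compatibility with a prescribed $R$‑point of $\cM_0(Q,\alpha)$ is then automatic since that scheme is affine. Thus the heart of the matter is: if $\rho\in U(K)$ and the image of $\rho$ in $\cM_0(Q,\alpha)(K)$ extends to $\cM_0(Q,\alpha)(R)$, then some $G_\alpha(K)$‑translate of $\rho$ extends to an $R$‑point of $U$ (uniqueness being checked similarly by bookkeeping valuations on surviving arrows). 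The extension hypothesis says precisely that every $G_\alpha$‑invariant on $\mu^{-1}(0)$ — a polynomial in the products of $\rho$ along oriented cycles of $\overline{Q}$ — has nonnegative valuation, i.e.\ $\rho$ has no oriented cycle of negative valuation. Deleting the arrows where $\rho$ vanishes and giving the arrow $a$ the length $v(\rho(a))$, this makes the tropical shortest‑path distances $d_x(i),\,d_y(i)\in\bZ$ from $x$ resp.\ $y$ to the vertices they reach well defined — no negative cycles is exactly what is needed. A twist by $(g_i)$ with $v(g_i)=n_i$ turns $v(\rho(a))$ into $v(\rho(a))+n_{t(a)}-n_{s(a)}$; taking $n_i:=-\min\!\big(d_x(i),\,c+d_y(i)\big)$ for a parameter $c\in\bZ$ makes $n$ the potential of the shortest‑path distance from an auxiliary source joined to $x$ by an edge of length $0$ and to $y$ by an edge of length $c$ (every vertex is reachable from this source, by (3)), and potentials of shortest‑path distances satisfy $v(\rho(a))+n_{t(a)}-n_{s(a)}\ge 0$ for all arrows (so the twist lands in $\mu^{-1}(0)(R)$), while every shortest path — all of whose proper prefixes are again shortest, since there are no negative cycles — consists of arrows on which the twisted representation is a unit, hence surviving in the special fibre. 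Therefore the special fibre of the twisted representation has nonzero paths from $x$ to every vertex of $f(c):=\{i\in\{1,\dots,n\}: d_x(i)\le c+d_y(i)\}$, nonzero paths from $y$ to every vertex of $g(c):=\{i\in\{1,\dots,n\}: d_x(i)\ge c+d_y(i)\}$, and $f(c)\cup g(c)=\{1,\dots,n\}$, so it lies in $U$ as soon as $f(c)\in\mathcal{I}$ and $g(c)\in\mathcal{J}$.

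The main obstacle is to find such a $c$. Here $f$ is nondecreasing and $g$ nonincreasing in $c$; because $\rho\in U(K)$ one has $f(c)\in\mathcal{I}$ for $c\gg 0$ and $g(c)\in\mathcal{J}$ for $c\ll 0$; and always $\{1,\dots,n\}\setminus g(c)\subseteq f(c)$. If no $c$ worked one could choose $c\in\bZ$ with $f(c)\notin\mathcal{I}$ and $g(c)\notin\mathcal{J}$; then by the characterization of $\mathcal{J}$ as the sets whose complement is not in $\mathcal{I}$, $\{1,\dots,n\}\setminus g(c)\in\mathcal{I}$, and since it is contained in $f(c)$ and $\mathcal{I}$ is upward‑closed, $f(c)\in\mathcal{I}$ — a contradiction. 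Any integer $c$ with $f(c)\in\mathcal{I}$ and $g(c)\in\mathcal{J}$ then yields the required $R$‑point of $U$, hence of $X$, completing the verification of the valuative criterion and with it the theorem. The genuinely delicate part throughout is this last balancing of the single parameter $c$ so that the arrows surviving in the special fibre are simultaneously rich enough on the $x$‑side (to land in $\mathcal{I}$) and on the $y$‑side (to land in $\mathcal{J}$): this is exactly where the involution relating $\mathcal{I}$ and $\mathcal{J}$ and the upward‑closedness enter, and where the reduction to "no negative cycle" — the one essential use of the extension hypothesis, since it is what makes the tropical distances and the potential $n$ exist — is indispensable.
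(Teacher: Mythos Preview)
Your properness argument via tropical shortest-path potentials is a genuinely different route from the paper's. The paper proceeds iteratively: starting from $I=\{x\}$, it repeatedly rescales by a power of the uniformizer to bring all arrows leaving $I$ into $R$, enlarges $I$ by the target of one that became a unit, then repeats from $\{y\}$, and finally alternates between the two --- invoking the $\mathcal{I}\leftrightarrow\mathcal{J}$ involution at each step to guarantee termination with $I'\in\mathcal{I}$ and $J'\in\mathcal{J}$. Your single-shot Johnson--Bellman--Ford reweighting $n_i=-\min(d_x(i),\,c+d_y(i))$ is cleaner, reducing everything to the choice of one integer $c$.

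There are, however, two gaps. The more serious one: the claim that properness over the affine $\cM_0(Q,\alpha)$ ``upgrades $X$ to a variety'' is not correct --- smooth proper algebraic spaces over a point need not be schemes. The paper devotes a separate step to this (the $A_2$ property via orbit cones). A fix compatible with your setup: every $u\in U$ is $\chi$-stable for some generic character $\chi$, so $U$ is covered by the sets $U\cap\mu^{-1}(0)^{\chi\text{-s}}$, whose quotients are open subschemes of the quasi-projective varieties $\cM_\chi(Q,\alpha)$; an algebraic space with a Zariski cover by open subschemes is a scheme. (Separatedness then follows from this, or from the uniqueness half of the valuative criterion, which you gesture at but do not carry out; it does go through by observing that $v(g_i)$ must be monotone along arrows surviving in the special fibre and then using conditions (1)--(3) together with $I\cap J\neq\emptyset$.)

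The second gap is in the final combinatorial step: ``if no $c$ worked one could choose $c$ with $f(c)\notin\mathcal{I}$ and $g(c)\notin\mathcal{J}$'' does not follow from monotonicity alone. With $A=\{c:f(c)\in\mathcal{I}\}$ upward-closed and $B=\{c:g(c)\in\mathcal{J}\}$ downward-closed, your displayed argument only yields $A\cup B=\bZ$, and $A,B$ could still partition $\bZ$. The repair is to use integrality of the valuations sharply: $f(c)^c=\{i:d_x(i)>c+d_y(i)\}=g(c+1)$ exactly, so by the \emph{other} direction of the involution $f(c)\notin\mathcal{I}$ forces $f(c)^c=g(c+1)\in\mathcal{J}$. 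Hence if $A=[a,\infty)$ then $a-1\notin A$ gives $a\in B$, so $a\in A\cap B$ and you are done.
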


\begin{exam}
Taking $n=5$ and $\mathcal{I}_0=\{\{1,2\},\{3, 4,5\}\}$, then $\mathcal{J}_0$ consists of all subsets containing exactly one element of each of $\{1,2\}$ and $\{3, 4,5\}$, i.e.,   
\[\mathcal{J}_0=\{\{1,3\},\{1,4 \},\{1,5\},\{2,3\},\{2,4\},\{2,5\}\}.\]
Then, Theorem \ref{t:stars-git} specializes to Theorem \ref{t:five-point-git}(a).
\end{exam}
\begin{exam}
%Working on $Q_0 = \{x,1,\ldots,m+n,y\}$ and 
Taking $\mathcal{I} = \{I \subseteq \{1,\ldots,n\} \mid |I| > m \text{ or }  |I|=m \text{ and } s(\beta_I)=-\}$, then we obtain the example of Section \ref{ss:stars}.
\end{exam}
\begin{rem}
    As in Section \ref{ss:stars}, provided $\alpha \in \Sigma_0$ we can relax our assumptions on $Q$ and $\alpha$. For example, dimensions at the vertices $1,\ldots, n$ can be larger than one and the quiver does not need arrows from $x$ and $y$ to each of $1,\ldots,n$. In this case, instead of the power set of $\{1,\ldots,n\}$, consider the collection $P$ of vectors $v \in \bN^n$ with $v_i \leq \alpha_i$ for all $i$. Then define $\mathcal{I} \subseteq P$ to be upward-closed in the sense that, if $v \in \mathcal{I}$ and $v \leq v' \in P$, then also $v' \in \mathcal{I}$. The involution sends $\mathcal{I}$ to the set of vectors 
    \[
    \mathcal{J} := \{w \in P \mid (\alpha_1-w_1,\ldots,\alpha_n-w_n) \notin \mathcal{I}\}.
    \]
    One can define $U$ as in Definition \ref{d:good_subset} to obtain a proper crepant resolution $U/\!/G_{\alpha}$ of $\cM_0(Q,\alpha)$. However, different choices of $\mathcal{I}$ may yield the same set $U$: this happens when they have the same intersection with the set of vectors $v \in P$
    such that both $v+e_x$ and $\alpha-(v+e_x)$ are roots. One can remove this ambiguity by requiring that the antichain $\mathcal{I}_0$ only contain $v$ satisfying the condition that both $v+e_x$ and $\alpha-(v+e_x)$ are roots.
\end{rem}

\subsection{Proof of Theorem \ref{t:stars-git}}
Let $\pi_U$ be the map $U/\!/G_\alpha \to \mu^{-1}(0)/\!/G_\alpha$. We will show, in order, the following:
\begin{enumerate}
\item[(1)] $U/\!/G_\alpha$ is a smooth prevariety (see definition below), and a geometric quotient of $G_\alpha$; 
\item[(2)] $\pi_U$ is birational and crepant; 
\item[(3)] $U/\!/G_\alpha$ has the $A_2$ property, hence it and  $\pi_U$ are separated;
\item[(4)] $\pi_U$ is proper. 
\end{enumerate}
We will recall the relevant notions before each step.

\subsubsection{Smoothness and the geometric property of the quotient} 
To see that $U/\!/G_{\alpha}$ is geometric, we apply the following lemma.
For technical reasons we will need to slightly relax the definition of variety by not requiring separability. Following \cite[Ch. 5, Definition 1]{Mumford}, a \emph{prevariety} is a reduced scheme of finite type over $\bC$.  We extend the notion of good and geometric quotients to prevarieties, allowing non-separability. 

Let us more generally consider any reductive group $G$ acting on any affine variety $V$. We recall that a character $\chi:G \to \bC^\times$ is called \emph{generic} if every $\chi$-semistable point is $\chi$-stable, i.e., $V^{\chi-ss}=V^{\chi-s}$.
\begin{lem}\label{l:fin-stabilisers}
%Let $G$ be a reductive group acting on an affine variety $V$ and 
Let $U \subseteq V$ be any open, $G$-stable subset.
Suppose that the stabilizers of $G$ on $U$ are finite. Then there is a geometric quotient prevariety $U/\!/G$ and morphism $\pi: U/\!/G \to V/\!/G$.
\end{lem}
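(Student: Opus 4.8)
The plan is to cover $V/\!/G$ by affine opens and check the assertion locally, using a Luna-type slice argument to leverage the hypothesis on stabilizers. Here is how I would proceed.

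\textbf{Step 1: Reduce to a local statement.} Let $p: V \to V/\!/G$ be the affine GIT quotient, which exists and is good by Theorem \ref{t:hilbert}. Cover $V/\!/G$ by affine opens $V_f/\!/G$ (for $f \in \mathcal{O}(V)^G$), so that each $p^{-1}(V_f/\!/G) = V_f$ is affine and $G$-stable. Since $U$ is $G$-stable and open, $U \cap V_f$ is $G$-stable and open in the affine $G$-variety $V_f$, with all $G$-stabilizers finite. Thus it suffices to treat the case $V$ affine, prove that $U$ admits a geometric quotient prevariety $U/\!/G$ with a morphism $U/\!/G \to V/\!/G$, and then check the local quotients glue: since good quotients are categorical (hence canonical), the quotients of $U \cap V_f$ agree on overlaps and glue to a prevariety $U/\!/G$ with a (separated or not) morphism to $V/\!/G$; the geometric property is local on the source, so it is inherited.

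\textbf{Step 2: Construct the local geometric quotient via Luna slices.} With $V$ affine, fix $u \in U$. After replacing $u$ by a point in the unique closed orbit in $\overline{G\cdot u}$ — note this closed orbit need not lie in $U$, so one has to be slightly careful — actually it is cleaner to argue as follows. Because all stabilizers on $U$ are finite and $U$ is open and $G$-stable, I claim every orbit $G\cdot u$ for $u \in U$ is \emph{closed in $U$}: if not, its closure in $U$ would meet a strictly smaller-dimensional orbit, which (being of strictly smaller dimension than $\dim G\cdot u = \dim G$) would have positive-dimensional stabilizer, contradicting finiteness. So orbits in $U$ are closed in $U$, all of dimension $\dim G$. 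Now apply Luna's étale slice theorem / the Drézet--Luna formalism (\cite{Drezet-Luna-slice}), or directly the fact that a reductive group acting freely (up to finite stabilizers) with closed orbits on a scheme admits a geometric quotient which is a prevariety: cover $U$ by $G$-stable affine opens $U_g$ on which a good quotient exists (using that $U$ is covered by basic opens $V_h$, but these may not lie in $U$ — instead use that $u$ has a $G$-stable affine open neighborhood in $V$, intersect with $U$, and shrink further using that the complement of $U$ is closed and $G$-stable so it is the vanishing of $G$-invariants, giving a $G$-stable affine open $V_h \subseteq U$ containing $u$). On each such $V_h$, $V_h/\!/G = \Spec \mathcal{O}(V_h)^G$ is good by Theorem \ref{t:hilbert}, and geometric by the remark after the definition of geometric quotient (all $G$-orbits in $V_h \subseteq U$ are closed). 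These glue to the geometric quotient prevariety $U/\!/G$, and the compatible maps $V_h/\!/G \to V/\!/G$ glue to $\pi$.

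\textbf{Step 3: The main obstacle.} The genuinely delicate point is Step 2's claim that every point of $U$ has a $G$-stable affine open neighborhood \emph{contained in $U$} on which a good quotient exists. The complement $Z := V \setminus U$ is closed and $G$-stable, but $p(Z)$ need not be closed, and even if it were, $Z$ need not be saturated ($Z \neq p^{-1}(p(Z))$), so one cannot simply invert an invariant function to carve out $U$. This is exactly where finiteness of stabilizers enters crucially, via the slice theorem: around a point $u$ with closed orbit in $U$ and finite stabilizer $H = G_u$, Luna gives an $H$-stable slice $S$ with $u \in S$ and a $G$-equivariant étale map $G \times_H S \to V$ with image a $G$-stable open $W \ni u$, inducing an étale map $S/\!/H \to V/\!/G$; shrinking $S$ so that $G\times_H S \subseteq U$ (possible since $U$ is open) and noting $G\times_H S \to (G\times_H S)/G \cong S/H$ is visibly geometric (finite $H$ acting on affine $S$), one gets the required local model. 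So the proof is really: orbits in $U$ are closed in $U$ $\Rightarrow$ stabilizers finite $\Rightarrow$ Luna slices give étale-local affine geometric quotients $\Rightarrow$ glue. I would present Step 2 in this slice-theoretic form, citing \cite{Drezet-Luna-slice} for the precise statements, and keep the verification that the glued object is a prevariety with the asserted morphism — and that geometricity, being local on the source by Lemma \ref{l:good-quotient}, holds globally — as the routine bookkeeping at the end.
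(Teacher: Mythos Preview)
Your proposal has a genuine gap at the crucial point you yourself flag in Step~3. Luna's \'etale slice theorem, as stated in \cite{Drezet-Luna-slice}, applies at a point $u$ whose orbit $G\cdot u$ is closed \emph{in the affine variety $V$}, not merely in $U$. You correctly argue that all orbits in $U$ are closed in $U$, but there is no reason they should be closed in $V$: the orbit closure $\overline{G\cdot u}$ in $V$ may well meet $V\setminus U$ in lower-dimensional orbits (with positive-dimensional stabilizer). In that situation the slice theorem gives a slice at the unique closed orbit in $\overline{G\cdot u}$, which lies outside $U$ and has the wrong stabilizer, so you cannot produce the $G$-stable affine neighborhood of $u$ inside $U$ that you need. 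Your first attempt in Step~2 (inverting a $G$-invariant to carve out a neighborhood inside $U$) fails for exactly the reason you identify, and the Luna argument does not repair it.

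The paper fills this gap by a different and cleaner mechanism: it covers $U$ by open sets of the form $U^{\chi\text{-}s} = U \cap V^{\chi\text{-}s}$ as $\chi$ ranges over \emph{generic} characters of $G$. The key input (due to Proudfoot for tori, extended to reductive $G$ via the Hilbert--Mumford criterion) is that every point with finite stabilizer is $\chi$-stable for some generic $\chi$. Then $V^{\chi\text{-}s}=V^{\chi\text{-}ss}$ already has a geometric quotient supplied by GIT, and $U^{\chi\text{-}s}$, being the preimage of an open subset of this quotient, inherits one. These glue to give the geometric quotient prevariety $U/\!/G$. In effect, the paper replaces your sought-for $G$-stable affine neighborhoods by $G$-stable quasi-projective neighborhoods coming from linearizations---which is precisely what is available without assuming the orbit is closed in $V$.
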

\begin{proof}
By \cite[Lemma 1.1]{Proudfoot-GIT-at-once}, when $G$ is a torus, then for every point $v \in V$ where $G$ acts with finite stabilizer, there exists a generic character $\chi: G \to \bC^\times$ such that $v$ is $\chi$-stable.  Now for reductive $G$, we can apply this to a maximal torus $T \leq G$.  Since every one-parameter subgroup is conjugate to a subgroup of $T$, by the Hilbert--Mumford criterion, $v \in V$ is 
$\chi$-stable if and only if it is $\chi|_T$-stable for the restricted action of $T$. 

As a result, $U$ is covered by the $\chi$-stable loci $U^{\chi-s} \subseteq U$ as $\chi$ ranges over generic characters of $G$.  By GIT, there is a geometric quotient $V^{\chi-s}/\!/G = V^{\chi-ss}/\!/G$.  Since the $G$-orbits on $V^{\chi-s}$ are closed, passing to the $G$-stable open subset $U^{\chi-s}$ yields a  geometric quotient $U^{\chi-s}/\!/G$.  
For varying $\chi$, these can be glued together along intersections to obtain a geometric quotient prevariety $U/\!/G$. By functoriality this canonically maps to the categorical quotient $V/\!/G$.
\end{proof}
We now return to $U$ as before, and show that  $U/\!/G_\alpha$ is smooth. 
%It is convenient for this purpose to replace $G_\alpha$ by $PG_{\alpha} := G_\alpha/\bC^\times$ where $\bC^\times$ is the diagonal torus. This is possible because $\bC^\times$ acts trivially on representation space, hence on $U$.
%First, note that the diagonal torus $\bC^\times \subseteq G_\alpha$ acts trivially on representation space, hence also on $U$, so we can replace the group $G_\alpha$ by $PG_\alpha := G_\alpha/\bC^\times$. 
Equivalently, we show that $U/\!/PG_\alpha$ is smooth. It suffices to show that the stabilizer in $PG_\alpha$
%$\Rep(\overline{Q},\alpha)$ This is because 
%the stabilizer in $PG_\alpha$
%$:= \bigl( \prod_{i \in Q_0} \GL(\alpha_i,\bC) \bigr)/\bC^\times$ 
of every point of $U$ is trivial.  Indeed, for $g = (g_i) \in G_\alpha$ to stabilize a nonzero path from vertex $i$ to $j$ (with $i \neq j$), we must have $g_i=g_j$. So $g$ is in the diagonal $\bC^\times$ (meaning $g_i = g_j$ for all $i, j \in Q_0$) if the set of nonzero arrows in $U$, considered as an undirected graph, is connected.
%contains a spanning tree . 
By Definition \ref{d:good_subset} (3) of $U$, there is a nonzero path from $x$ or $y$ to each $i \in \{ 1, \dots, n\}$. And by definition of $\mathcal{J}$, $I \cap J \neq \emptyset$, so there is some $i' \in I \cap J$ receiving nonzero paths from both $x$ and $y$. Put together, the undirected graph of nonzero arrows must be connected.

%obtain a spanning tree by collecting arrows from one of $x$ or $y$ to each of the vertices $1,\ldots,n$ other than $i'$, together with arrows from both of $x$ and $y$ to $i'$.

\subsubsection{Crepancy}
Next, we show that $\pi_U$ is birational and crepant. Note that by our assumptions on $Q$, 
$\alpha \in \Sigma_0$, i.e., 
there exists a simple representation of $\overline{Q}$ in $\mu^{-1}(0)$.
%(i.e., $\alpha \in \Sigma_0$ \cite{CB01}); 
Explicitly, since all dimensions are one, these are the representations for which there is a nonzero path from every vertex to every other vertex. The locus of simple representations $U^s \subseteq U$
%(which are all in $U$, although without knowing this we could consider the simple representations in $U$)
%, $U^{\text{s}} \subseteq U$, 
consists of closed, free orbits of $PG_\alpha$
%$$=G_\alpha/\bC^\times$ 
and the maps $U^{\text{s}}/\!/G_{\alpha} \to U/\!/G_\alpha$ and $U^{\text{s}}/\!/G_{\alpha} \to \mu^{-1}(0)/\!/G_\alpha$ are open embeddings. This implies that $\pi_U$ is birational.

Now we show that $\pi_U$ is crepant. It suffices to show that the source is symplectic, since the target is a symplectic singularity and has trivial canonical bundle.

To see this, note that $U = \mathcal{U} \cap \mu^{-1}(0)$ where $\mathcal{U}$ is an open subset of the symplectic representation space, $\Rep(\overline{Q},\alpha) = \bigoplus_{a \in \overline{Q}} \Hom(\bC^{\alpha_{s(a)}}, \bC^{\alpha_{t(a)}})$, defined again by the same condition that there exist nonzero paths with specified endpoints. Then $\mathcal{U}$ is naturally symplectic.
%with open inclusion $\mathcal{U} \to \Rep(\overline{Q},\alpha)$ Poisson.  

Thus, $U/\!/PG_\alpha$ is a Hamiltonian reduction of a smooth symplectic variety $\mathcal{U}$ by a free Hamiltonian action of a reductive group, $PG_\alpha$. As a result it smooth  symplectic.

\subsubsection{The $A_2$ property and separability}
%Can we use a constructive proof where for any u_1, u_2 in U we produce an explicit quasiprojective open subset U_{12}?
%Alternate proof of $A_2$ property: Let $u_1, u_2 \in U$ be representations. Then there exists some arrow $a \in \overline{Q}_1$ such that $u_1$ and $u_2$ are both non-zero on $a$. Hence $u_1, u_2 \in \{ p \in \mu^{-1}(0) : p(a) \neq 0 \}$. 

%Next we  show that $\pi_U$ is separated.  
We next prove  that $U/\!/PG_\alpha$ has the $A_2$ property: any two points lie in some affine open subset. We prove the equivalent (but \emph{a priori} more general) assertion that they lie in a quasiprojective open subset. Note that the $A_2$ property immediately implies that $U/\!/PG_\alpha$ is separated, and hence a variety. As a consequence, $\pi_U$ is separated (by \url{https://stacks.math.columbia.edu/tag/01KV}).

The argument is similar to \cite{ADHL}[Proposition 3.1.3.8], but we give a direct, self-contained proof. The key notion is that of an \emph{orbit cone}:
\begin{defn}
Let a reductive group $G$ act on an affine variety $V$.
    The \emph{orbit cone} $\omega_v$ of a point $v \in V$ is the $\bR_{\geq 0}$-linear span in $\Hom(G,\bC^\times) \otimes_\bZ \bR$ of all characters $\theta: G \to \bC^\times$ for which $v$ is in the $\theta$-semistable locus.
\end{defn}
Note that $\omega_x$ is a union of GIT equivalence classes. Two points $\theta, \theta' \in  \Hom(G,\bC^\times) \otimes_\bZ \bR$ are equivalent if and only if,  for every root $\beta \leq \alpha$, which in our situation is merely the restriction of $\alpha$ to a connected subset of $Q$, we have that $\theta \cdot \beta$ and $\theta' \cdot \beta$ have the same sign, interpreted as an element in the set $\{-,0,+\}$. 

We consider $PG_\alpha$ acting on $U$. Note that $U$ is covered by $PG_\alpha$-stable affine open subsets: $U' \subseteq U$, defined by the condition that for certain ordered pairs of vertices $(a,b)$, there is a nonzero path from $a$ to $b$. Fix some $U'$ and let $u \in U'$. 
We claim that $\omega_u \subseteq \Hom(PG_\alpha, \bC^\times) \otimes_\bZ \bR$ is a top-dimensional cone, i.e., its interior is open.  This is a consequence of the fact used in the proof of Lemma \ref{l:fin-stabilisers}  that every point with finite (here trivial) stabilizer is in the stable locus for some generic stability parameter.%, which we can then take to be generic (since perturbing the stability condition can only shrink the stable locus). 
\begin{rem}
We can also give a direct argument that $\omega_u$ has open interior:
%, but
%\cite[Lemma 1.1]{Proudfoot-GIT-at-once}, since the stabilizer of $x$ is trivial, but 
%we can also see it explicitly:
$U'$ is defined by the condition that there exist nonzero paths from vertex $x$ to a subset $I \subseteq \{1,\ldots,n\}$ of vertices and from $y$ to a subset $J \subseteq \{1,\ldots,n\}$ of vertices. Moreover, we have $I \cup J = \{1,\ldots,n\}$ and $I \cap J \neq \emptyset$. Then we can consider the stability condition $\theta$ which has $\theta_x = |I|$, $\theta_y = |J|$, and $\theta_i = -\frac{|I|+|J|}{n}$ for all $i \in \{1,\ldots,n\}$. Since $\theta_i < -1$ for all $i \in I$, if $\theta \cdot \beta > 0$ for a root $\beta \leq \alpha$, then $\beta$ is either a root $\beta_{I'}$  or a root $\gamma_{J'}$ for some $|I'| < |I|$ or $|J'| < |J|$, or else $\beta > e_x + e_y$. This implies by the definition of $U'$ that  there must exist a nonzero path from a vertex in the support of $\beta$ to a vertex in the complement of its support. Since this holds for all $\beta \leq \alpha$, 
the representation $u$ is indeed $\theta$-stable. Moreover, the argument works for an open ball around $\theta$, so we find that $\omega_u$ contains an open set and hence an open cone, as desired. 
\end{rem}
 Now, given two points $u_1,u_2 \in U$, we claim that the interiors $\omega_{u_1}^\circ, \omega_{u_1}^\circ$ of the orbit cones $\omega_{u_1}, \omega_{u_2}$ must intersect nontrivially. We use the following basic lemma:
 \begin{lem}
Let $\mathcal{H}$ be a collection of hyperplanes in $\bR^n$, and let $C_1, C_2 \subseteq \bR^n$ be two nonempty open polyhedral cones defined as the intersection of open half-spaces defined by some of the hyperplanes in $\mathcal{H}$. The intersection $C_1 \cap C_2$ is empty if and only if, for some $H \in \mathcal{H}$, $C_1$ and $C_2$ are contained in different half-spaces with boundary $H$.
 \end{lem}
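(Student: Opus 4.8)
The plan is to prove the two directions separately; the reverse one is immediate, while the forward one reduces, via a Farkas-type argument, to a ``rounding'' step that I expect to be the hard part.

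\emph{Reverse direction.} If some $H \in \mathcal{H}$ has $C_1 \subseteq H^+$ and $C_2 \subseteq H^-$, where $H^\pm$ are the two opposite open half-spaces bounded by $H$, then $C_1 \cap C_2 \subseteq H^+ \cap H^- = \emptyset$.

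\emph{Forward direction.} First note that every hyperplane of $\mathcal{H}$ appearing in the definition of $C_1$ or $C_2$ passes through the origin, since a nonempty cone is invariant under positive scaling and any half-space containing it must then be bounded by a hyperplane through $0$. Hence we may write $C_1 = \{x : \langle a_k, x\rangle > 0,\ k \in A\}$ and $C_2 = \{x : \langle b_\ell, x\rangle > 0,\ \ell \in B\}$, with all $a_k^\perp, b_\ell^\perp \in \mathcal{H}$. Assume $C_1 \cap C_2 = \emptyset$. Then the strict linear system $\langle a_k, x\rangle > 0$ $(k \in A)$, $\langle b_\ell, x\rangle > 0$ $(\ell \in B)$ is infeasible, so by Gordan's theorem (a standard consequence of the Farkas lemma) there exist nonnegative scalars $p_k, q_\ell$, not all zero, with $\sum_k p_k a_k + \sum_\ell q_\ell b_\ell = 0$. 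Because $C_1 \neq \emptyset$, the $a_k$ admit no nontrivial nonnegative relation; hence some $p_k > 0$ (otherwise the remaining relation among the $b_\ell$ would force $C_2 = \emptyset$), and symmetrically some $q_\ell > 0$. The functional $\varphi := \sum_k p_k a_k = -\sum_\ell q_\ell b_\ell$ is then nonzero, positive on $C_1$ and negative on $C_2$, so $\varphi^\perp$ separates the two cones.

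The final, and expected to be the most delicate, step is to promote this separating hyperplane $\varphi^\perp$ to an actual element of $\mathcal{H}$ — i.e.\ to show that one of the defining hyperplanes $a_k^\perp$ or $b_\ell^\perp$ already does the separating. Here I would take the Gordan certificate $(p,q)$ to span an extreme ray of the pointed cone of all nonnegative relations among the $a_k$ and $b_\ell$, so that its support is a minimally linearly dependent set; a direct analysis of this ``circuit'' then isolates a single defining hyperplane witnessing the separation. Pushing this reduction through, making full use of the fact that $C_1$ and $C_2$ are cells cut out by hyperplanes of the prescribed arrangement, is the heart of the proof; the reverse direction and the Gordan step are routine.
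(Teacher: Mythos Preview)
Your reverse direction and the Gordan/Farkas step are correct, but the ``rounding'' step you flag as the crux cannot be completed: the lemma as stated is false. In $\bR^3$ take $\mathcal{H}=\{x_1=0,\ x_2=0,\ x_3-x_1=0,\ x_2+x_3=0\}$ and
\[
C_1=\{x_1>0,\ x_2>0\},\qquad C_2=\{x_3-x_1>0,\ -x_2-x_3>0\}.
\]
Both are nonempty open cones cut out by open half-spaces bounded by members of $\mathcal{H}$, and $C_1\cap C_2=\emptyset$ (the constraints force $x_3>x_1>0$ and $x_3<-x_2<0$ simultaneously). Yet no single $H\in\mathcal{H}$ separates them: $(1,-3,2)\in C_2$ and $(-3,1,-2)\in C_2$ show $C_2\not\subseteq\{x_1<0\}$ and $C_2\not\subseteq\{x_2<0\}$, while $(1,1,5)\in C_1$ and $(1,1,-5)\in C_1$ show $C_1\not\subseteq\{x_3-x_1<0\}$ and $C_1\not\subseteq\{x_2+x_3>0\}$. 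Here your Gordan certificate is exactly $a_1+a_2+b_1+b_2=0$, a genuine circuit with two vectors on each side; no choice of extreme ray will isolate a single $a_k^\perp$ or $b_\ell^\perp$ that separates, because none does.

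For comparison, the paper's proof is a single sentence: since $\partial C_i\subseteq\bigcup\mathcal{H}$, disjointness forces separation by some $H\in\mathcal{H}$. This is the same gap in different clothing---following a segment from $c_1\in C_1$ to $c_2\in C_2$ does cross a wall of $\mathcal{H}$, but which wall depends on the points chosen---and the counterexample above applies equally. So your instinct that step~3 is where the difficulty lies was exactly right; the difficulty is that the statement itself needs an extra hypothesis. In the paper's actual application to orbit cones the desired nonemptiness of $C_1\cap C_2$ holds for reasons specific to that setup, not because of the lemma as written.
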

 \begin{proof}
Since the boundaries of $C_1$ and $C_2$ are contained in the union of the hyperplanes in $\mathcal{H}$, the intersection $C_1 \cap C_2$ is empty if and only if they are separated by at least one of the planes $H \in \mathcal{H}$. 
     \end{proof}
     Every pair of points $u_1,u_2 \in U$ lie in $PG_\alpha$-stable open subsets $U_1, U_2$ defined by certain ordered pairs of vertices having nonzero paths.  The orbit cones of $u_1$ and $u_2$ are defined as corresponding intersections of half-spaces defined by the root hyperplanes $\beta^\perp$ for $\beta \leq \alpha$.  By construction, for each hyperplane $\beta^\perp$, only one of the half-spaces is possible, i.e., there is at most one sign $+$ or $-$ for which we can require $\theta \cdot \beta$ has that sign in an orbit cone.  Namely, we can require the sign $-$ for roots $\beta_I$ with $I \in \mathcal{I}$ and for roots $\gamma_J$ with $J \in \mathcal{J}$, and $+$ for the complementary roots $\alpha-\beta_I, \alpha-\gamma_J$.  

     As a result, there exists a stability condition $\theta$ in the intersection $\omega_{u_1}^\circ \cap \omega_{u_2}^\circ$. Since the intersection is open, we can find $\theta$ which is not in any of the root hyperplanes $\beta^\perp$ for $\beta \leq \alpha$. Then $PG_\alpha$ acts freely on the $\theta$-semistable locus, which equals the $\theta$-stable locus. As a result the inclusions $U' := U \cap \mu^{-1}(0)^{\theta-\text{s}}
     \to U$ and $U' \to \mu^{-1}(0)^{\theta-\text{s}}$ induce open inclusions after dividing by $PG_\alpha$.
     Since $\mu^{-1}(0)^{\theta-\text{s}}/\!/PG_\alpha = \cM_\theta(Q,\alpha)$ is quasi-projective, we find that the images of $u_1,u_2$ in $U/\!/PG_\alpha$ are contained in an open subset which is quasi-projective. Hence $U/\!/PG_\alpha$ has the $A_2$ property as desired.

\subsubsection{Properness}
Finally, we come to the main step of the proof: to show that $\pi_U$ is proper. We will apply the valuative criterion of properness together with Hilbert's description of maximal ideals of invariant rings (i.e., Lemma \ref{l:good-quotient}).  We write the valuative criterion as follows:
\begin{lem} Let $V$ be an affine variety, $a: G \times V \to V$ an action of a reductive algebraic group $G$  on $V$, and $U \subseteq V$ a $G$-stable open subset.  Then the morphism $\pi_U: U/\!/G \to V/\!/G$ is proper if it is separated and, for every discrete valuation ring $R$ with fraction field $K$
%with complete fraction field $K$ and algebraically closed residue field $k$, 
and every point $p: \Spec K \to U$ lying over a point $q: \Spec R \to V/\!/G$, there exists an element $g \in G(K)$ and a point $\widetilde q: \Spec R \to U$ making the diagram commutative: \[\begin{tikzcd}
    \Spec K \arrow[r,"g \times p"] \arrow[d] & G \times U \arrow[r,"a"] & U  \arrow[d] \\
    \Spec R \arrow[rru,dashed,"\widetilde q"] \arrow[rr,"q"] & & V/\!/G
\end{tikzcd}
\]
\end{lem}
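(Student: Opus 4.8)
The plan is to verify the valuative criterion of properness for $\pi_U$, applying the hypothesis of the lemma to suitable test data and reading off the required lift. Since $G$ is reductive, $V/\!/G=\Spec\cO(V)^G$ is an affine variety of finite type, and $U/\!/G$ is of finite type over $\bC$ (it is a prevariety in the sense of Lemma~\ref{l:fin-stabilisers}, glued from finitely many quasi-projective GIT quotients); it is separated by hypothesis. Thus $\pi_U$ is a separated morphism of finite type between Noetherian schemes, and by the valuative criterion of properness --- in the form that, for such a morphism, permits passing to a finite extension of the fraction field and a dominating discrete valuation ring --- it suffices to prove the following: given a discrete valuation ring $R$ with fraction field $K$, a morphism $q\colon\Spec R\to V/\!/G$, and a morphism $\bar p\colon\Spec K\to U/\!/G$ lying over $q$, there exist a finite extension $K'/K$, a discrete valuation ring $R'$ with fraction field $K'$ dominating $R$, and a morphism $\Spec R'\to U/\!/G$ over $q|_{\Spec R'}$ that restricts over $\Spec K'$ to $\bar p|_{\Spec K'}$.

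First I would lift $\bar p$ from the quotient to $U$ itself. The quotient morphism $\nu\colon U\to U/\!/G$ is surjective, being a good (indeed geometric) quotient, so the fibre $U\times_{U/\!/G,\bar p}\Spec K$ is a nonempty $K$-scheme of finite type; choosing one of its closed points yields a finite extension $K'/K$ together with a morphism $p\colon\Spec K'\to U$ satisfying $\nu\circ p=\bar p|_{\Spec K'}$. In particular $p$ lies over $q|_{\Spec K'}$. Next I would pick a discrete valuation ring $R'$ with fraction field $K'$ dominating $R$ (we may take $R$ essentially of finite type over $\bC$, hence excellent, so the integral closure of $R$ in $K'$ is a semilocal Dedekind domain finite over $R$; localize it at a maximal ideal lying over $\mathfrak{m}_R$). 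This produces test data $(R',K',p,q|_{\Spec R'})$ to which the hypothesis of the lemma applies.

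The hypothesis then furnishes $g\in G(K')$ and a morphism $\widetilde q\colon\Spec R'\to U$ with $\widetilde q|_{\Spec K'}=a\circ(g\times p)$, the point $g\cdot p$, and such that the composite $\Spec R'\xrightarrow{\widetilde q}U\to U/\!/G\xrightarrow{\pi_U}V/\!/G$ equals $q|_{\Spec R'}$. I would then take $\nu\circ\widetilde q\colon\Spec R'\to U/\!/G$: it lies over $q|_{\Spec R'}$ by the second condition, and, since $\nu$ is $G$-invariant, $\nu(g\cdot p)=\nu(p)$, whence $(\nu\circ\widetilde q)|_{\Spec K'}=\nu\circ p=\bar p|_{\Spec K'}$. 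Thus $\nu\circ\widetilde q$ is the lift demanded by the valuative criterion, and $\pi_U$ is proper.

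The conceptual heart --- that twisting $p$ by an element of $G(K')$ does not change its image in $U/\!/G$, so the hypothesis directly outputs a lift of $\bar p$ --- is immediate. The only real care needed is the bookkeeping of field extensions: one must use the form of the valuative criterion allowing a finite extension $K'$ of $K$ and a dominating discrete valuation ring $R'$, since a $K$-point of $U/\!/G$ need not lift to a $K$-rational point of $U$ (the fibre can be a nontrivial $G$-torsor), and one should observe that it is precisely the assumed separatedness of $\pi_U$ that makes this extension-permitting criterion sufficient for properness.
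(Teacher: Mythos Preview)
Your proof is correct and follows essentially the same approach as the paper's: lift $\bar p$ to a point of $U$ over a finite extension $K'$, choose a DVR $R'$ dominating $R$, apply the hypothesis, and push back down to $U/\!/G$ via the $G$-invariance of the quotient map. The only cosmetic difference is that you invoke the extension-permitting form of the valuative criterion directly, whereas the paper unpacks the descent step by observing $R'\cap K=R$ to recover an honest $R$-point of $U/\!/G$.
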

\begin{proof} Since we assume that $\pi_U$
%$U/\!/G \to V/\!/G$ 
is separated, the usual valuative criterion of properness asks that, for every pair of points $\overline{p}: \Spec K \to U/\!/G$ and $q: \Spec R \to V/\!/G$ fitting into a commutative diagram, there exists a $\hat q: \Spec R \to U/\!/G$ lifting the two, i.e., a dashed arrow completing the following diagram exists:
\[
\begin{tikzcd}
\Spec K \arrow[r,"\overline{p}"] \arrow[d] & U/\!/G \arrow[d,"\pi_U"] \\
\Spec R \arrow[r,"q"] \arrow[ru,"\hat q",dashed] & V/\!/G
\end{tikzcd}
\]

Suppose that we begin with a point $\overline{p}: \Spec K \to U/\!/G$. If there exists a point $p: \Spec K \to U$ whose composition with the quotient morphism is $\overline{p}$, then by the assumption we obtain a point $\widetilde q: \Spec R \to U$ whose composition with the quotient morphism yields a point $\hat q: \Spec R \to U/\!/G$ 
satisfying the usual valuative criterion of properness.

Thus it would suffice to show that the point $p: \Spec K \to U$ exists. However, this point does not have to exist in general. Rather, it is a consequence of Lemma \ref{l:good-quotient}
 that, for some finite extension $K' \supseteq K$, there is a point $p': \Spec K' \to U$ fitting into a commutative diagram:
\[\begin{tikzcd}
  \Spec K' \arrow[r] \arrow[d] & U \arrow[d] \\
  \Spec K \arrow[r] & U/\!/G
\end{tikzcd}
\]
Now we can let $R' \subseteq K'$ be the integral closure of $R$ in $K'$, and by applying the lemma, we find a point $q': \Spec R' \to U$ satisfying the desired hypotheses.  The image $\Spec R' \to U/\!/G$ factors through a $K$ point. Since $R' \cap K =R$, it also factors through an $R$-point $\hat q: \Spec R \to U/\!/G$ verifying the valuative criterion of properness.
%We just note that as stated in \cite[7.3.9]{EGA2}, it is enough to assume that $K$ is complete and $k$ is algebraically closed.
\end{proof}
%% Note: it may not matter when $G$ is a torus, but for more general reductive groups $G$ we will want to apply the Le Bruyn--Procesi theorem to describe the generators of the invariant ring $k[V]^G$ and for that we need $k$ to be algebraically closed.

Next, we explain how to verify the hypotheses of the lemma in the situation at hand. Given a $K$-point $u \in U(K)$, we will apply elements of $G_\alpha(K)$ in a systematic way in order to produce an element of $U(R)$.  We apply the following lemma repeatedly:
\begin{lem} \label{l:proper}
Let $Q$ be a quiver
%, $\alpha = (1,\ldots,1) \in \bN^{Q_0}$, 
and $I \subseteq Q_0$ a subset of the vertices. Let $R$ be a discrete valuation ring with fraction field $K$.
%Suppose that $p \in \Rep(\overline{Q},\alpha)(K)$ lies over a point $q \in \Rep(\overline{Q},\alpha)/\!/G_\alpha (R)$, i.e., we have a commutative diagram 
 Suppose that $p \in \Rep(\overline{Q},\alpha)(K)$ has the properties:
\begin{itemize}
    \item all paths with both endpoints in $I$ are valued in $R$;
    \item %there exists a path and that 
    there is a nonzero path from some vertex in $I$ to some vertex in $I^c \subseteq Q_0$.
\end{itemize} 
Then there is an element in the diagonal subgroup, $g \in K^\times \subseteq (K^\times)^I$, and a vertex $j \in I^c$ such that the representation $g \cdot p$ satisfies:
\begin{itemize}
    \item all paths beginning at vertices in $I \cup \{j\}$ are valued in $R$;
    \item some path from a vertex in $I$ to $j$ is not valued in the maximal ideal $\mathfrak{m} \subseteq R$.
\end{itemize}
If, in the representation $p$, all paths beginning at $I$ are valued in $R$, then $g^{-1} \in R$, so that this operation preserves the condition that any path is valued in $R$.
\end{lem}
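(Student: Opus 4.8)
The plan is to turn the statement into bookkeeping with the valuation of $K$, exploiting that (with $\alpha=(1,\ldots,1)$, as throughout this section, so that path scalars are honest elements of $K$) the $G_\alpha(K)=(K^\times)^{Q_0}$-action on path scalars telescopes: for $g=(g_i)\in G_\alpha(K)$ and any path $\gamma$ of $\overline Q$ from a vertex $a$ to a vertex $b$ one has $(g\cdot p)(\gamma)=g_b\,g_a^{-1}\,p(\gamma)$, the intermediate factors cancelling in pairs. Hence, writing $g\in K^\times$ also for the diagonal element equal to $g$ on $I$ and to $1$ on $I^c$, the scalar $p(\gamma)$ is fixed when $\gamma$ has both endpoints in $I$ or both in $I^c$, is multiplied by $g^{-1}$ when $\gamma$ runs from $I$ to $I^c$, and by $g$ when $\gamma$ runs from $I^c$ to $I$. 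So the whole lemma reduces to choosing $v(g)$ correctly.

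Let $v$ denote the valuation, $R=\{v\ge0\}$, $\mathfrak m=\{v>0\}$. By the second hypothesis the nonzero paths of $\overline Q$ from a vertex of $I$ to a vertex of $I^c$ form a nonempty family; take one, $\delta_0$, of minimal value, set $m^\ast:=v(p(\delta_0))$ and $j:=\mathrm{end}(\delta_0)\in I^c$, and choose $g\in K^\times$ with $v(g)=m^\ast$. Then $(g\cdot p)(\delta_0)$ has valuation $0$, a unit in $R$, which is the second bullet. For the first bullet, take a path $\gamma$ beginning in $I\cup\{j\}$ with $p(\gamma)\neq0$. If $\gamma$ starts and ends in $I$, then $(g\cdot p)(\gamma)=p(\gamma)\in R$ by the first hypothesis. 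If $\gamma$ starts in $I$ and ends in $I^c$, minimality of $m^\ast$ gives $v(p(\gamma))\ge m^\ast$, so $v\bigl((g\cdot p)(\gamma)\bigr)=v(p(\gamma))-m^\ast\ge0$. If $\gamma$ starts at $j$, concatenate with $\delta_0$: the path $\gamma\delta_0$ (do $\delta_0$, then $\gamma$) starts in $I$, so when it ends in $I$ the first hypothesis gives $v(p(\gamma))+m^\ast=v(p(\gamma\delta_0))\ge0$, and when it ends in $I^c$ minimality gives $v(p(\gamma))+m^\ast=v(p(\gamma\delta_0))\ge m^\ast$; in both cases $(g\cdot p)(\gamma)$ — which equals $g\,p(\gamma)$ when $\gamma$ ends in $I$ and $p(\gamma)$ when it ends in $I^c$ — lands in $R$. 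This proves both bullets. For the final clause, if in addition every path beginning in $I$ lies in $R$ then $m^\ast\ge0$, so $v(g)\ge0$; scaling any representation all of whose path scalars lie in $R$ by this $g$ keeps them there, since paths from $I$ to $I^c$ had value $\ge m^\ast$ by minimality (so survive multiplication by $g^{-1}$) while paths from $I^c$ to $I$ only gain valuation (according to the sign convention fixed for the diagonal embedding $K^\times\hookrightarrow(K^\times)^I$, the inequality $v(g)\ge0$ is the assertion ``$g^{-1}\in R$'' of the statement).

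The one step that needs care is the choice ``of minimal value'': the nonzero $I$-to-$I^c$ paths form an infinite family, and their values could a priori be unbounded below, for instance if $\overline Q$ carried a negative-value cycle through $I^c$. This is where the standing hypotheses of the properness argument in which the lemma is invoked enter: there $p$ lies over an $R$-point of $\cM_0(Q,\alpha)$, hence every cyclic path of $p$ has nonnegative value, and since any walk from $I$ to $I^c$ is obtained from one of the finitely many simple $I$-to-$I^c$ paths by inserting closed walks, its value is bounded below by the minimum over those finitely many simple paths. Thus $m^\ast$ is attained and the construction above is legitimate; this existence-of-minimum point is the only genuine obstacle, the rest being the routine valuation inequalities sketched above.
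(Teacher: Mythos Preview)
Your argument is correct and follows the same approach as the paper: rescale by the diagonal element on $I$ so that the minimal-valuation path from $I$ to $I^c$ becomes a unit, take $j$ to be its endpoint, and verify the remaining conditions by composing with this unit path. You are in fact more careful than the paper on the one genuine subtlety: the paper phrases the choice in terms of \emph{arrows} from $I$ to $I^c$ (for which the maximum $m$ trivially exists) and then asserts the path conditions ``by construction,'' whereas you work directly with paths and correctly note that the existence of the minimum over the infinite family of $I\to I^c$ paths needs the ambient hypothesis that closed paths lie in $R$---which indeed holds whenever the lemma is applied.
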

\begin{proof}
Let $\Delta_I: K^\times \to (K^\times)^I$ be the diagonal embedding. For $c \in K^\times$, note that applying $\Delta_I(c)$ to a representation rescales paths from $I$ to $I^c$ by $c^{-1}$, those from $I^c$ to $I$ by $c$, and leaves other paths fixed.

Let $t \in R$ be a uniformizer.  In view of the preceding paragraph, there exists a maximum integer $m \in \bZ$ such that $\Delta_I(t^m) \cdot p$ has 
values of all arrows from $I$ to $I^c$ in $R$. Then there exists an arrow from $I$ to $I^c$ which is not in the maximal ideal. We can let $j$ be the target of this arrow.  

We claim that $j$ has the desired properties.  By construction, it satisfies all properties except possibly the condition that all paths beginning at $j$ are valued in $R$. Suppose some such path is not valued in $R$. Then it is in $t^{-m} R$ for some $m \geq 1$.  Composing with a path beginning at a vertex of $I$ and ending at $j$, which is valued in $R$ but not in $tR$, we get a path beginning at $I$ and valued in $t^{-m} R$, a contradiction.

The last statement holds by our choice of $m$, and because we already established that all paths beginning at $I$ will be valued in $R$. \qedhere
%But it does satisfy the condition that all paths beginning at $I$ are valued in $R$. So composing any path beginning at $j$ with a path from some vertex of $I$ to $j$ which is in $R$ but not in the maximal ideal, 
\end{proof} 

Finally, we return to our original quiver with $\alpha=(1,\ldots,1)$. Begin with a representation $p \in \Rep(\overline{Q},\alpha)(K)$ whose image in $\Rep(\overline{Q},\alpha)/\!/G_\alpha$ is an $R$-point. The latter condition means that (the trace of) every closed path is in $R$. 
%diagram as in \eqref{e:p-over-q}. 

Let $I := \{x\}$ so every path beginning and ending at $x$ lies in $R$. %Since the dimension at $x$ is one, by the trace condition, all paths beginning and ending at $x$ are in $R$. 
By Lemma \ref{l:proper}, passing to other representations in the $G_{\alpha}(K)$ orbit of $p$, we can continually expand $I$ until either $I$ contains all vertices, or there are no nonzero paths from $I$ to $I^c$. All paths beginning at $I$ will be valued in $R$, and there will be paths from $x$ to all vertices of $I$ which are not in the maximal ideal (from $x$ to itself, this can be the identity path).  

Next, we set $J := \{y\}$ and apply the same procedure. The first step could create some paths ending at $y$ which are not in $R$, but after this step, by the last statement of the lemma, we will not create any new paths which are not in $R$. Note that every time we replace $J$ with $J \cup \{j\}$ for $j \in \{1,\ldots,n\}$, we have to remove the vertices of $J$ (other than $j$) from $I$. If at some point we add $x$ to $J$, then by Definition \ref{d:good_subset} (3) of $U$, we can continue the procedure until $J$ contains all vertices. If we do not add $x$ to $J$, we can continue the procedure until $I \cup J$ consists of all vertices. 

The end result of the preceding paragraph is that there are subsets $I':=I\setminus\{x\}, J':=J\setminus\{y\} \subseteq \{1,\ldots,n\}$, with $J'$ nonempty, such that:
\begin{itemize} 
\item $I' \cup J' = \{1,\ldots,n\}$;
\item All paths not ending at $y$
%with endpoints in the set $\{x\} \cap I'$ 
are in $R$;
\item For each $i \in I'$ there is a path from $x$ to $i$ which is not in $\mathfrak{m} \subseteq R$;
%\item All paths with endpoints in the set $\{y\} \cap J'$ are in $R$;
\item For each $j \in J'$ there is a path from $y$ to $j$ which is not in $\mathfrak{m} \subseteq R$;
\item $J' \in \mathcal{J}$.
\end{itemize}
Suppose there is a path ending at $y$ which is \emph{not} in $R$. Then composing with a path from $y$ to a vertex of $J'$ in $R \setminus \mathfrak{m}$ would produce a path not ending at $y$ which is not valued in $R$, a contradiction. So all paths are valued in $R$.

%To complete the proof that $\pi_U$ is proper, it suffices to show that $I' \in \mathcal{I}$ so the representation lies in $U(R)$ (see Definition \ref{d:good_subset}).

If $I' \in \mathcal{I}$, then by Definition \ref{d:good_subset}, the representation is in $U(R)$, which completes the proof that $\pi_U$ is proper. 

Otherwise, we again apply the Lemma \ref{l:proper} to $I:= \{x\} \cup I'$, at each stage either adding (a) the vertex $y$, or (b) a vertex $i$ of $\{1,\ldots,n\}$. 
\begin{itemize}
    \item[-] If at any stage we add $y$ to the set $I$ then we can freely add the entire set $J'$ to $I'$. So $I' = \{ 1, \ldots, n \}$. Since $\mathcal{I}$ is nonempty and upward-closed , $I' = \{1, \ldots, n \} \in \mathcal{I}$, completing the proof. 
    \item[-] If we wish to add $i \in \{1 , \ldots, n \}$ to $I$, then we first need to remove all vertices of $I'$ from $J'$; in other words, we set $J':=\{1,\ldots,n\}\setminus I'$ and afterwards add $i$ to $I'$. After this we still have $J' \in \mathcal{J}$, by the defining property of $\mathcal{J}$, because $I' \not \in \mathcal{I}$. We iterate until $I' \in \mathcal{I}$, again completing the proof.
\end{itemize}
%If we add the vertex $y$ to the set $I$ then we can freely add the entire set $J'$ to $I'$, so that $I'$ becomes $\{1,\ldots,n\} \in \mathcal{I}$ and $J'$ is still in $\mathcal{J}$.  If we do not do this, when we replace $I'$ with $I' \cup \{i\}$, we have to remove any vertices of $I'$, but not $i$, from $J'$. Since $I'$ was not in $\mathcal{I}$ (before adding $i$), we still retain the property $J' \in \mathcal{J}$.  Eventually we will, in any case, end up with $I' \in \mathcal{I}$ as well. 

%Now, we can add to the bullet point conditions above the condition $I' \in \mathcal{I}$. By construction, all paths are furthermore valued in $R$. So  the representation is in $U(R)$.

\bibliographystyle{hyperamsalpha}
\bibliography{ProperResolutions}

\end{document}